\newcommand{\commentout}[1]{}
\newcommand{\Rea}{\Re e}
\newcommand{\Ima}{\Im m}
\newcommand{\C}{\mathbb{C}}
\newcommand{\R}{\mathbb{R}}
\newcommand{\N}{\mathbb{N}}
\newcommand{\Z}{\mathbb{Z}}
\newcommand{\ep}{\varepsilon}
\newcommand {\Chi} {{\bf \raise 2pt \hbox{$\chi$}} }
\newcommand {\f}   {\frac}
\newcommand {\p}   {\partial}
\newcommand{\beq}{\begin{equation}}
\newcommand{\beqa} {\begin{array}{rl}}
\newcommand{\eeq}{\end{equation}}
\newcommand{\eeqa}{\end{array}}
\newtheorem{theorem}{Theorem}[section]
\newtheorem{corollary}{Corollary}
\newtheorem{lemma}[theorem]{Lemma}
\newtheorem{proposition}{Proposition}
\theoremstyle{definition}
\newtheorem{definition}[theorem]{Definition}
\newtheorem{remark}{Remark}
\title[Time Asymptotics for  Critical Growth-Fragmentation Equations] 
      {Time Asymptotics for a Critical Case in Fragmentation and Growth-Fragmentation Equations}
\author[Marie Doumic and Miguel Escobedo]{}
\subjclass{Primary: 35B40, 35Q92; Secondary: 45K05, 92D25, 92C37, 82D60.}
 \keywords{Structured populations; growth-fragmentation equations; cell division;  self-similarity; long-time asymptotics; rate of convergence.}
 \email{marie.doumic@inria.fr}
 \email{miguel.escobedo@ehu.es}
\begin{document}
\maketitle

\centerline{\scshape Marie Doumic}
\medskip
{\footnotesize
   \centerline{Sorbonne Universit\'es, Inria, UPMC Univ Paris 06}
   \centerline{Lab. J.-L. Lions, UMR CNRS
  7598}
   \centerline{75005 Paris, France.}
} 

\medskip

\centerline{\scshape Miguel Escobedo$^*$}
\medskip
{\footnotesize
 \centerline{ Departamento de Matem\'{a}ticas}
 \centerline{ Universidad del Pa\'{\i}s Vasco UPV/EHU}
\centerline{Apartado 644, E-48080 Bilbao, Spain.}
   \centerline{\&}
\centerline{Basque Center for Applied Mathematics (BCAM)} 
\centerline{Alameda de Mazarredo 14}
\centerline{E-48009 Bilbao, Spain.}
}

\bigskip

 \centerline{(Communicated by the associate editor name)}

\begin{abstract}
Fragmentation and growth-fragmentation equations is a family of problems with varied and wide applications. This paper is devoted to the description of the long-time asymptotics of  two critical cases of these equations, when the division rate is constant and the growth rate is  linear or zero. The study of these cases may be reduced to the study of the following fragmentation equation:
$$\f{\p}{\p t} u(t,x)  +  u(t,x)=\int\limits_x^\infty k_0\left(\f{x}{y}\right)  u(t,y) dy.$$
Using the Mellin transform of the equation, we determine the long-time behavior of the solutions. Our results show in particular the strong dependence of this asymptotic behavior with respect to the initial data.
\end{abstract}

\section{Introduction}

Fragmentation and growth-fragmentation equations is a family of problems with varied and wide applications: phase transition, aerosols, polymerization processes, bacterial growth, systems with a chemostat etc. \cite{CL2,Drake_1972,Fredrickson_1967,DHKR2,SinkoStreifer}. That explains the continuing interest they meet. This paper is devoted to the description of the long  time asymptotic behavior of two critical cases of these equations that have been left open in the previous literature.

Under its general form, the linear growth-fragmentation equation may be written as follows.
\begin{equation}
\label{eq:croisfrag}
\f{\p}{\p t} u(t,x) + \f{\p}{\p x} \big(\tau (x) u (t,x)\big) + B(x)u(t,x)=\int\limits_x^\infty k(y,x) B(y) u(t,y) dy,
\end{equation}
where $u$ represents the concentration of particles of size $x$ at time $t,$ $\tau$ their growth speed, $B(x)$ the total fragmentation rate of particles of size $x,$ and $k(y,x)$ is the probability that a particle of size $y$ breaks and leaves a fragment of size $x$. For the sake of simplicity, we focus here on binary fragmentation, where each agent splits into two parts, but generalization to $k$ fragments does not present any difficulty. In the case where $\tau \equiv 0,$ Equation~\ref{eq:croisfrag} is the pure linear fragmentation equation. 

Existence, uniqueness and asymptotic behaviour have been studied and improved in many articles, let us refer to \cite{Mischler:frag} for a most recent one, which also presents an exhaustive review of the literature.  Let us indicate only the results most closely related to our work.

\begin{itemize}
\item In the case $B(x)=x^\gamma$ and $\tau(x)=x^\nu$, existence and uniqueness of a steady nonnegative profile $({\mathcal U},\lambda)$ with $\lambda>0$, ${\mathcal U}(x>0)>0,$ ${\mathcal U}\in L^1(xdx)$ and trend of $u(t,x)e^{-\lambda t}$ towards $\mathcal U$ for an appropriate weighted norm  is established in \cite{M1}, under the assumption $1+\gamma-\nu >0.$ This profile $\mathcal U$ and $\lambda $ are solutions of the eigenvalue problem

\begin{equation}
\left\{
\begin{split}
&\f{\p}{\p x} \big(\tau (x) {\mathcal U} (x)\big) + (B(x)+\lambda ){\mathcal U}(x)=\int\limits_x^\infty k(y,x) B(y) {\mathcal U}(y) dy,\,\,x\ge 0\\
&\tau {\mathcal U}(x=0)=0,\qquad {\mathcal U}(x)\ge 0, \qquad \int _0^\infty {\mathcal U}(x)dx=1.
\end{split}
\right.
\label{eq:eigen}
\end{equation}

Though this result may be generalized or refined in different directions \cite{DG, CCM}, the assumption linking $B$ and $\tau$ for $x$ vanishing or tending to infinity remains of the same kind: if $B(x)\sim x^\gamma$ and $\tau(x) \sim x^\nu$, it is necessary that $1+\gamma - \nu >0$. This may be understood as a balance between growth and division: enough growth is necessary in the neighbourhood of zero to counterbalance fragmentation, whereas enough division for large $x$ is necessary to avoid mass loss to infinity. We refer to \cite{DG} for more details; in particular, counter-examples may be given where no steady profile exist if one of the assumptions is not fulfilled.

\

\item The fragmentation equation, \emph{i.e.} when $\tau\equiv 0$,  
\begin{equation}
\label{eq:Ffrag}
\f{\p}{\p t} u(t,x)  + B(x)u(t,x)=\int\limits_x^\infty k(y,x) B(y) u(t,y) dy,
\end{equation}
was considered in \cite{EscoMischler3} and \cite{MR2650037} for a total fragmentation rate of the form 

\noindent
$B(x)=x^\gamma $ and for 
\begin{equation}
\begin{split}
&k(x, y)=\frac {1} {x}k_0\left(\frac {y} {x} \right);\,k_0 \,\,\hbox{is a non negative measure},\\
&\hbox{supp}\, k_0\subset [0, 1],\,\,\,\int \limits_0^1 z k_0(z)dz=1,\,\,\,\int \limits_0^1 k_0(z)dz >1.
\end{split}
\label{def:probak}
\end{equation} 

(i) When $\gamma >0$, it was proved in  \cite{EscoMischler3} that for initial data $u_0$ in  $L^1(xdx)$ the function $xu(t, x)$  converges to a Dirac measure, and it does so in a self-similar way. (This term is used here and in all the following in a slightly different way than in  \cite{MR2650037}, see below in Section~\ref{sec:genconvDirac} for its precise meaning.) The self-similar profiles are solutions of a particular case of Equation (\ref{eq:eigen}), with $\tau (x)=x$, so $\nu=1$, and $\lambda =1$. The condition $\gamma >0$ may then be seen as $\gamma +1-\nu>0$ again.

(ii) For  $\gamma <0$,  it was shown in \cite{MR2650037} that the behaviour is not self-similar and strongly depends  on the initial data $u_0$. The precise convergence in the sense of measures of suitable rescalings of the solutions was proved for initial data of compact support or with exponential and algebraic decay at infinity.

\end{itemize}

In this article, we investigate Equation~(\ref{eq:croisfrag}) when $\gamma =0$, $\nu =1$ and  the function $k(x, y)$ is still given by~(\ref{def:probak}). This is one critical case where $\gamma +1-\nu=0$. It is already known that under such conditions,  there is no solution to the eigenvalue problem~(\ref{eq:eigen}) in the case of homogeneous fragmentation ($k_0\equiv2$) or if $\tau(x)=cx$ and $B(x)=B$ with two constants $c,B >0$ and $c\neq B$ \cite{DG}. On the other hand, for the fragmentation equation, if $\gamma =0$ the arguments of \cite{EscoMischler3} based on a suitable scaling of the variables  break down. How can we expect to characterize the asymptotic behaviour of the population in that case? Our initial remark is that under such conditions on $\gamma $ and $\nu$ the growth fragmentation equation and the fragmentation equation are related by a very simple change of dependent variable. Suppose  that $u$ satisfies
\begin{eqnarray}
&&\f{\p}{\p t} u +  u =\int\limits_x^\infty \f{1}{y}k_0\left(\f{x}{y}\right)u(t,y)dy, \label{eq:frag}\\
&&u(0,x)=u_0(x),\label{eq:fragdata}
\end{eqnarray}
then the function 
\begin{equation}
\label{lin:uv}
v(t, x)=e^{-ct}u(t, xe^{-ct})
\end{equation}
satisfies

\begin{equation}\label{eq:transp:frag}
\f{\p}{\p t} v + \f{\p}{\p x}(c  x v) +  v =\int\limits_x^\infty \f{1}{y}k_0\left(\f{x}{y}\right) v(t,y)dy, \qquad v(t=0,x)=u_0(x).
\end{equation}
From  the  behaviour of one of them it is then easy to deduce the behaviour of the other.
 
The behaviour of Equation~(\ref{eq:frag}) has been studied  from a
probabilistic point of view in~\cite{MR2017852} where it is satisfied by the law of a stochastic fragmentation
process.   A law of large numbers and a central limit theorem were proved for
some modifications of the empirical distribution of the fragments.
As we will see below, the main novelty of our work
is that more accurate asymptotics, pointwise results and some rates of convergence are
given in Theorems~\ref{theorem1}, ~\ref{theorem2} and~\ref{theorem3} below. It also shows 
 a strong dependence from  the initial data of the asymptotic behaviour and the rates of convergence. 
Although the general behaviour of the solutions of~(\ref{eq:frag}) was essentially understood
in~\cite{MR2017852},  the present paper is a refinement and complement  obtained using different methods. 
We shall also see that it is possible to recover, at the level of the density function $u$ studied here, the
asymptotic behaviour in law of the stochastic probabilities that has been proved in \cite{MR2017852} (cf. Corollary~\ref{cor:process}).

An important quantity for the  solutions of the fragmentation equation~(\ref{eq:frag}) is the following:
\begin{equation}
\label{def:mass}
M(t)=\int _0^\infty xu(t, x)dx
\end{equation}
called sometimes the mass of the solution $u$ at time $t$. After multiplication of Equation~(\ref{eq:frag}) by $x$, integration on $(0, \infty)$ and applying Fubini's theorem, if all these operations are well defined, it follows that,
\begin{equation}
\label{prop:cons}
\frac {d} {dt}M(t)=0.
\end{equation} 
All the solutions of Equation~(\ref{eq:frag}) considered in this work satisfy that property (c.f. Theorem \ref{thm:existence}).
Since on the other hand, in the pure fragmentation equation  the particles may only fragment into smaller ones, it is natural to expect  $xu(t, x)$  to converge to a Dirac mass at the origin as $t\to \infty$. This property  is proved in Theorem~\ref{thm: toDirac} below, which states that, under suitable conditions on the initial data, $xu(t, x)$ converges to $M\delta $ in $\mathcal D'(\R^+)$ as $t\to +\infty$, where $M=\int_0^\infty x u_0(x)dx$.

The main objective of this work is to determine how this convergence takes place.  

Our initial  observation  (in Theorem~\ref{thm: noselfsim} below) is that Equation~(\ref{eq:frag}) has no solution of the form $w(t,x)=f(t)\Phi(xg(t))$ such that $\Phi \in  L^1(xdx)\cap L^\alpha (xdx)$ for some $\alpha >1$.  However it has a one parameter family of self-similar solutions that do not satisfy these conditions, see Remark~\ref{selfsim}, which are all the functions of the form
$$u_s(t,x):=x^{-s} e^{({K}(s)-1) t}=e^{({K}(s)-1) t-s\log x}.$$
Our main results actually  show that the  long-time behaviour of the solutions of  System~(\ref{eq:frag})(\ref{eq:fragdata})  strongly depend on their initial data and make appear this family of self-similar solutions. It also appears that this dependence is determined by the measure $k_0$.  This is seen by exhibiting a large set of initial data, for which the solutions to Equation~(\ref{eq:frag}) are given by the means of the Mellin transform and  where  such a dependence is seen very explicitly.

\section{Assumptions and Main Results}

\subsection{Representation formula by the means of the Mellin transform}
The solutions of Equation~(\ref{eq:frag}) may be explicitly computed for a large class of initial data by the means of the Mellin transform. Given a function $f$ defined on $(0, \infty)$, its Mellin transform is defined as follows:
\begin{equation}
\label{def:mellin}
\mathcal M _f (s)=\int \limits _{ 0 }^\infty x^{s-1}f(x)dx
\end{equation}
whenever this integral converges. In the following we denote, for the sake of simplicity
$$U(t,s):=\mathcal M _{u(t,\cdot)} (s).$$
It is easy to check that if we multiply all the terms of Equation~(\ref{eq:frag}) by $x^{s-1}$ and integrate on $(0, \infty)$, assuming that all the integrals converge and Fubini's theorem may be applied, we obtain:
\begin{equation}
\label{eq:frag:mellin}
\frac {\partial } {\partial t}U(t, s)+U(t, s)=K(s)\, U(t, s)
\end{equation}
where, by~(\ref{def:probak}), 
\begin{equation}
\label{def:mellin:noyau}
K (s)=\mathcal M _{ k_0 }(s)=\int\limits_0^1 k_0(z)z^{s-1} dz
\end{equation}
is continuous on $\Rea (s) \ge 1$, analytic in $\Rea ( s)>1$ with $K(2)=1$ and $K(1)>1$ due to~ \eqref{def:probak}. We have then, formally at least:
\begin{eqnarray}
U(t, s)&=& U_0(s)\,e^{(K(s)-1)t}\label{def:expl}\\
U_0(s)&=&\mathcal M _{u_0} (s).\label{def:expl2}
\end{eqnarray}
It only remains, in principle, to invert the Mellin transform to recover $u(t, x)$. As it is well known, in order to have an explicit formula for the inverse Mellin transform, some hypothesis on  $U_0={\mathcal  M}_{u_0}$ and $K(s)$ are needed.  If such conditions are fulfilled then,
\begin{equation}
\label{def:invmellin}
u(t, x)=\frac {1} {2\pi i}\int \limits _{ \nu-i\infty }^{ \nu+i\infty }U_0(s)\,e^{(K(s)-1)t}x^{-s}ds
\end{equation}
for some $\nu\in \R$ suitably chosen. Theorem~\ref{thm:existence} in Section~\ref{sec:explicit} below provides a rigorous setting where~\eqref{def:invmellin} holds true.  Our results on the long-time behaviour of the solutions to~(\ref{eq:frag}) are based on this explicit expression of these solutions.

\subsection{Asymptotic formula}\label{sec:main}
We are mainly interested in the asymptotic behaviour of the solution as $t\to \infty$ and how it depends on the initial data. This information will be extracted from the inverse Mellin transform in~(\ref{def:invmellin}).
In order to understand the long-time behaviour of the function $u$, it is readily seen on Equation~\eqref{def:invmellin}  that a key function is
\begin{equation}
\label{def:phi2}
\phi(s,t,x)=-s \log(x) + t {K}(s).
\end{equation}

We are then led to consider different regions  of the real half line $x>0$, determined by the different behaviour of the $x$ variable with respect to $t$.  It will be divided in several subdomains that are determined by the relative values of two parameters, that we shall denote as $p_0$, $q_0$, with respect to a third, that we call $s_+$. The two first, $p_0$ and $q_0$, only depend on the initial data $u_0$. The third parameter $s_+=s_+(t, x)$ depends on the kernel $k_0$ as well as on $t$ and $x$. In order to define these three parameters we need to precise the initial data $u_0$ and the kernel $k_0$ that we shall consider.

Our results give a somewhat  detailed  description of the long-time behaviour of the solutions $u$ of~(\ref{eq:frag})(\ref{eq:fragdata}).  This behaviour is only true for a certain set of solutions, and it  depends on several parameters of the initial data $u_0$. These must then satisfy several specific conditions. This may be seen as a drawback of our method.

The initial data $u_0$ will be assumed to be a function satisfying  the following conditions
\begin{equation}
u_0\ge 0,\,\,\,\,\int \limits _{0}^\infty u_0(x)(1+x)dx<\infty.\label{initialdata:int1}
\end{equation}
By the condition~(\ref{initialdata:int1}), the Mellin transform of the initial data $\mathcal M(0, \alpha )$ is analytic on the strip $\Rea (\alpha )\in (1,2)$ at least. Let us denote:
\begin{equation}
 I (u_0)=\left\{p\in \R;\,\,\, \int \limits _{ 0 }^\infty u_0(x)x^{p-1}dx<\infty\right\}. \label{initialdata:Ip}
 \end{equation}
 This is necessarily an interval of $\R$ and, by Hypothesis~(\ref{initialdata:int1}), $[1,2]\subset  I(u_0)$.
We then define:
 \begin{eqnarray}
&&p_0=\inf \{p; \,\,p\in I (u_0)\}\in [-\infty, 1]\label{initialdata:pzero} \\
&&q_0=\sup\{q;  \,\,q\in I (u_0)\} \in [2, +\infty]\label{initialdata:qzero}.
\end{eqnarray}

By~(\ref{initialdata:int1}) we have $p_0\le 1$ and $q_0\ge 2$. If  $q_0=+\infty$ (resp. $p_0=-\infty$), Assumptions~\eqref{initialdata:reg1}--\eqref{initialdata:reg3} (resp. \eqref{initialdata:reg1bis}--\eqref{initialdata:reg3bis}) are meaningless and useless, since Theorems~\ref{theorem1} and \ref{theorem2} (ii) (resp. Theorem~\ref{theorem2} (i)) are empty: they correspond to an empty domain of $\R_+^2$ for $(t,x)$. Next, we assume:

 \begin{eqnarray}
&&\hskip -1cm\forall \nu \in (p_0, q_0):  \int \limits _{0}^\infty |u_0'(x)|x^\nu dx<\infty,\,\,\,\lim _{ x\to 0 }x^\nu u_0(x)=\lim _{ x\to \infty}x^\nu u_0(x)=0,\label{initialdata:int2}\\
&&\hskip -1cm \forall \nu \in (p_0, q_0):  \int \limits _{0}^\infty |u_0''(x)|x^{1+\nu} dx<\infty,\,\,\,\lim _{ x\to 0 }x^{1+\nu} u_0'(x) =\lim _{ x\to \infty}x^{1+\nu} u_0'(x)=0.\label{initialdata:int3}
\end{eqnarray}
These hypothesis will be used in order to have the explicit expression~(\ref{def:invmellin}) for the solution $u$ of  System~(\ref{eq:frag}), (\ref{eq:fragdata}). 
 Some  of  our results on the asymptotic behaviour need furthermore the following conditions on the initial data.
\begin{eqnarray}
\exists a_0>0,\,\exists r>q_0; && |u_0(x)-a_0x^{-q_0}|\le C _{ 1 }x^{-r},\,\,\,\forall x>1, \label{initialdata:reg1}\\
&&|u_0'(x)+a_0q_0x^{-q_0-1}|\le C _{2 }x^{-r-1} \,\,\,\forall x>1\label{initialdata:reg2}\\
&& \hskip -1.8cm \int _1^\infty x^{\nu+1}\left|u_0''(x)-a_0q_0(q_0+1)x^{-q_0-2} \right|dx<\infty,\,\, \forall \nu\in (q_0, r).\label{initialdata:reg3}
 \end{eqnarray}
 
 \begin{eqnarray}
 \exists b_0>0,\,\exists \rho <p_0; && |u_0(x)-b_0x^{-p_0}|\le  C' _{1 }x^{-\rho },\,\,\,\forall x\in (0, 1), \label{initialdata:reg1bis}\\
&&|u_0'(x)+b_0p_0x^{-p_0-1}|\le  C' _{ 2 }x^{-\rho -1} \,\,\,\forall x\in (0, 1)\label{initialdata:reg2bis}\\
&& \hskip -1.8cm\int _0^1 x^{\nu+1}\left|u_0''(x)-b_0p_0(p_0+1)x^{-p_0-2} \right|dx<\infty,\,\, \forall \nu\in (\rho, p_0).\label{initialdata:reg3bis}
 \end{eqnarray}
for some positive constants $C_1, C_2,  C'_1, C'_2$.
 
These assumptions  impose that $u_0$ behaves
like $a_0x^{-q_0}$ as $x \to \infty$ and like $b_0x^{-p_0}$ as $x \to 0$, and give their behaviour up to second order terms $x^{-r}$ and $x^{-\rho}$.

We do not know what the results would be  under weaker conditions on $u_0$. We may notice that  in Theorems~\ref{theorem1}, \ref{theorem2} and \ref{theorem3} stated below,  the principal parts of the expansions of the solution $u$ only depend on the parameters $p_0$, $q_0$ and the function $U_0$.  The properties on the derivatives of $u_0$ only appear in the lower order and remaining terms.  It is then conceivable that some convergence of  $u$ towards these principal terms  remain true without the conditions~(\ref{initialdata:int2})--(\ref{initialdata:reg3bis}), although without decay rate estimates.

The kernel $k(x, y)$ is  defined by~(\ref{def:probak}), from where we  already saw that ${ K} (s)$ is analytic in $(1, \infty)$, continuous  and strictly decreasing on $[1,\infty).$  We complete this assumption by considering, as for $u_0$, the integral $I(k_0)$ defined by~\eqref{initialdata:Ip} and define
\begin{equation}\label{k0:p1}
p_1=\inf \{p; \,\, p\in I(k_0)\}.
\end{equation}
The relative position of $p_0$ and $p_1$ is discussed after Theorem~\ref{theorem2}. Under these conditions on $k_0$, 
it may be checked that the function $K$ is strictly convex on $(p_1,+\infty)$ and that for any $t>0$, $x\in (0, 1)$ there exists a unique $s=s_+(t, x)$ such that
\begin{equation}
\label{alphaplus20}
s_+(t, x)=(K')^{-1}\left(\f{\log x}{t}\right).
\end{equation}
(c.f. Lemma \ref{lem:phi} in the appendix).

We may now come to the main results of this work, that is the  description of the long-time behaviour of the solutions $u$ of System~(\ref{eq:frag})(\ref{eq:fragdata}) given by~(\ref{def:invmellin}). 

In the region $x>1,$ the behaviour of $u$  may be easily described. That is because  for $t>0$ and $x>1$, the function $\phi (s, t, x)$ is decreasing with respect to the real part of $s$. We have then in that region
 \begin{theorem}
\label{theorem1}
 Suppose that $u_0$ satisfies~(\ref{initialdata:int1}), (\ref{initialdata:int2})-(\ref{initialdata:reg3}). Then, for all $\delta>0$ arbitrarily small
 \begin{equation}
u(t, x)= a_0\, x^{-q_0}\, e^{({ K }(q_0)-1)t}\left(1+\mathcal O\left( e^{\left( { K }(r-\delta )-{ K}(q_0)\right)t}\right) \right) 
\end{equation}
as $t\to \infty$, uniformly for all $x\ge 1$.
\end{theorem}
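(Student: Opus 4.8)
The plan is to start from the explicit representation formula~\eqref{def:invmellin}, which is valid here because $u_0$ satisfies~\eqref{initialdata:int1}--\eqref{initialdata:reg3} (this is the content of Theorem~\ref{thm:existence} invoked in Section~\ref{sec:explicit}). Thus we may write, for a suitable vertical line $\Rea(s)=\nu$,
\begin{equation*}
u(t,x)=\frac{1}{2\pi i}\int_{\nu-i\infty}^{\nu+i\infty} U_0(s)\,e^{(K(s)-1)t}\,x^{-s}\,ds .
\end{equation*}
The key structural fact in the region $x\ge 1$ is that $-\log x\le 0$, so $|x^{-s}|=x^{-\Rea(s)}$ is nonincreasing in $\Rea(s)$; since $K$ is real and strictly decreasing on the real axis, moving the contour to the \emph{right} both decreases $|x^{-s}|$ (for $x\ge 1$) and decreases $\Rea K(s)$ along the real axis, hence decreases the exponential growth rate $e^{(\Rea K(s)-1)t}$. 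This monotonicity is precisely why the behaviour in $x>1$ is ``easy'': there is no competition between the two factors, and pushing the contour right is unambiguously favourable.

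The main step is then a contour-shift argument. Under assumption~\eqref{initialdata:reg1}, the function $u_0(x)-a_0 x^{-q_0}$ decays like $x^{-r}$ at infinity, while its contribution near $0$ is controlled by~\eqref{initialdata:int1}; consequently $U_0(s)$ extends meromorphically from the strip $\Rea(s)\in(1,2)$ past $\Rea(s)=q_0$, with a single simple pole at $s=q_0$ coming from the $a_0 x^{-q_0}$ tail, of residue $-a_0$ (since $\int_1^\infty x^{s-1}\cdot a_0 x^{-q_0}\,dx=a_0/(q_0-s)$), and is analytic in the strip $q_0<\Rea(s)<r$. Shifting the contour from $\Rea(s)=\nu\in(1,2)$ to $\Rea(s)=r-\delta$ and collecting the residue at $s=q_0$ gives
\begin{equation*}
u(t,x)= a_0\,x^{-q_0}\,e^{(K(q_0)-1)t}
+\frac{1}{2\pi i}\int_{(r-\delta)-i\infty}^{(r-\delta)+i\infty} U_0(s)\,e^{(K(s)-1)t}\,x^{-s}\,ds .
\end{equation*}
One must justify that the horizontal segments at $\pm iR$ vanish as $R\to\infty$: this is where the hypotheses~\eqref{initialdata:int2}--\eqref{initialdata:reg3} on $u_0'$ and $u_0''$ are used, via two integrations by parts in the Mellin integral, to obtain decay $|U_0(\sigma+i\tau)|=\mathcal O(|\tau|^{-2})$ uniformly for $\sigma$ in the relevant strip — exactly the kind of estimate already needed to make~\eqref{def:invmellin} rigorous, so it should be imported from Theorem~\ref{thm:existence} / Lemma~\ref{lem:phi} rather than redone.

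It remains to estimate the shifted integral. On the line $\Rea(s)=r-\delta$ we bound $|x^{-s}|=x^{-(r-\delta)}\le x^{-q_0}$ for $x\ge 1$ (using $r>q_0$ and $\delta$ small), and $|e^{(K(s)-1)t}|=e^{(\Rea K(r-\delta+i\tau)-1)t}\le e^{(K(r-\delta)-1)t}$, since for real convex $K$ one has $\Rea K(\sigma+i\tau)\le K(\sigma)$ is \emph{not} automatic — rather, one uses that $K(s)=\int_0^1 k_0(z)z^{s-1}dz$ so $|e^{K(s)t}|\le e^{(\int_0^1 k_0(z)z^{\sigma-1}dz)t}=e^{K(\sigma)t}$ directly from the triangle inequality. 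The $\tau$-integral then converges by the $\mathcal O(|\tau|^{-2})$ decay of $U_0$, yielding a bound
$C\,x^{-q_0}\,e^{(K(r-\delta)-1)t}$, uniformly in $x\ge 1$. Factoring out $a_0 x^{-q_0}e^{(K(q_0)-1)t}$ turns the remainder into $\mathcal O\!\left(e^{(K(r-\delta)-K(q_0))t}\right)$, and since $K$ is strictly decreasing and $r-\delta>q_0$ this exponent is negative, giving the claimed decay. I expect the only delicate point to be the uniform $|\tau|^{-2}$ control of $U_0$ together with the meromorphic continuation up to $\Rea(s)=r$; the monotonicity of the two exponential/power factors for $x\ge1$ makes everything else routine, and in particular no saddle-point analysis (the function $\phi$ of~\eqref{def:phi2}) is needed in this region.
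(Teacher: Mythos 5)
Your proof is correct and follows essentially the same route as the paper: apply the representation formula, establish the meromorphic extension of $U_0$ past $\Rea(s)=q_0$ with pole of residue $-a_0$ and $\mathcal O(|\tau|^{-2})$ decay via double integration by parts (this is the paper's Lemma~\ref{extq}, not Theorem~\ref{thm:existence} or Lemma~\ref{lem:phi}), shift the contour to $\Rea(s)=\nu'\in(q_0,r)$, and bound the remainder using $|x^{-s}|\le x^{-q_0}$ for $x\ge 1$ together with $|e^{K(s)t}|\le e^{K(\Rea s)\,t}$. Your sign bookkeeping (residue $-a_0$ becoming $+a_0 x^{-q_0}e^{(K(q_0)-1)t}$ after the rightward shift) and the factoring to produce $\mathcal O\bigl(e^{(K(r-\delta)-K(q_0))t}\bigr)$ both match the paper's estimate~\eqref{estres}.
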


As it is shown in Lemma  \ref{lem:phi}, when $x\in [0, 1]$ the behaviour of the function $\phi $, and as a consequence that of the solution $u(t, x)$, is not so simple. Let us first describe the behaviour of the solutions of System~(\ref{eq:frag})(\ref{eq:fragdata}) in the region where $x\in [0, 1]$ and the balance between $x$ and $t$ leads to a result similar to Theorem~\ref{theorem1}.
\begin{theorem}
\label{theorem2}
Suppose that $u_0$ satisfies~(\ref{initialdata:int1}), (\ref{initialdata:int2}), (\ref{initialdata:int3}), $k_0$ satisfies \eqref{def:probak}, \eqref{k0:p1}. Define the domains:
\begin{eqnarray*}
D _{ p_0 }^-=\left\{x\in (0, 1);  s _+(t, x)<p_0\right\}\,\,\,\hbox{and}\,\,\,\,D _{ q_0 }^+=\left\{x\in (0, 1);  q_0<s _+(t, x)\right\}
\end{eqnarray*}
where  $s_+$ is defined in \eqref{alphaplus20}.  
Suppose moreover that $u_0$ satisfies~(\ref{initialdata:reg1bis})-(\ref{initialdata:reg3bis}). Then, 
\begin{eqnarray*}
(i)
\hskip 0.5cm u(t, x)=b_0 x^{-p_0}e^{({ K }(p_0)-1)t}
 \left(1+\mathcal O\left( e^{-\frac {t\,K''(p_0)} {2}(s_+(t, x)-p_0)^2}\right) \right),\\
\hbox{as}\,\,\,t\to \infty, \,\,\hbox{uniformly in}\,\,\, D _{ p_0 }^-.
\end{eqnarray*}

Suppose moreover that $u_0$ satisfies~(\ref{initialdata:reg1})--(\ref{initialdata:reg3}). Then,
\begin{eqnarray*}
(ii)
\hskip 0.5cm u(t, x)=a_0 x^{-q_0}e^{({ K }(q_0)-1)t}
 \left(1+\mathcal O\left( e^{-\frac {t\,K''(q_0)} {2}(s_+(t, x)-q_0)^2}\right) \right),\\
\hbox{as}\,\,\,t\to \infty, \,\,\hbox{uniformly in }\,\,\,D_{q_0}^+.
\end{eqnarray*}
\end{theorem}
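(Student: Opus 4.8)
\textbf{Proof proposal for Theorem~\ref{theorem2}.}

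The plan is to analyze the inverse Mellin integral~\eqref{def:invmellin} by the saddle-point (steepest-descent) method, localizing the contour near the critical point of the phase $\phi(s,t,x)$ and then splitting off the contributions of the poles of $U_0$ at $s=p_0$ and $s=q_0$. First I would recall from Lemma~\ref{lem:phi} that, since $K$ is strictly convex on $(p_1,\infty)$, the real phase $s\mapsto -s\log x + t K(s)$ (for $x\in(0,1)$, so $\log x<0$) has a unique minimum along the real axis at $s=s_+(t,x)=(K')^{-1}(\tfrac{\log x}{t})$, and that one is free to move the vertical integration line in~\eqref{def:invmellin} to $\Rea(s)=\nu$ for any $\nu$ in the analyticity strip without changing the value, as long as $U_0$ is analytic there and the contributions at $\pm i\infty$ vanish; the decay conditions~\eqref{initialdata:int2}--\eqref{initialdata:int3} guarantee the latter via repeated integration by parts, giving $|U_0(\nu+i\tau)|=\mathcal{O}(|\tau|^{-2})$ uniformly on compact $\nu$-intervals.

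For part~(ii), we are in the domain $D_{q_0}^+=\{x\in(0,1): q_0<s_+(t,x)\}$. Here $U_0(s)=\mathcal{M}_{u_0}(s)$ is analytic for $\Rea(s)<q_0$ and, by the precise asymptotics~\eqref{initialdata:reg1}, extends meromorphically across $\Rea(s)=q_0$ with a simple pole at $s=q_0$ of residue $-a_0$ (the Mellin transform of $a_0 x^{-q_0}\mathbf{1}_{x>1}$ is $\tfrac{-a_0}{s-q_0}$, and the correction $u_0-a_0x^{-q_0}$ contributes a function analytic up to $\Rea(s)<r$ by~\eqref{initialdata:reg1}, with the derivative bounds~\eqref{initialdata:reg2}--\eqref{initialdata:reg3} again yielding the $\mathcal{O}(|\tau|^{-2})$ decay on vertical lines to the right of $q_0$). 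The key step is then a contour shift: starting from a line $\Rea(s)=\nu$ with $1<\nu<q_0$, push the contour rightward to $\Rea(s)=s_+(t,x)$. Since $q_0<s_+(t,x)$ in $D_{q_0}^+$, we cross the simple pole at $s=q_0$, picking up the residue term
\[
-2\pi i\,\mathrm{Res}_{s=q_0}\Big(U_0(s)e^{(K(s)-1)t}x^{-s}\Big)=2\pi i\, a_0\, x^{-q_0}e^{(K(q_0)-1)t},
\]
so that after dividing by $2\pi i$ this produces exactly the announced leading term $a_0 x^{-q_0}e^{(K(q_0)-1)t}$. It remains to estimate the shifted integral along $\Rea(s)=s_+(t,x)$: on that line the real part of the phase is minimized at the center $s=s_+$, and the strict convexity gives, by the standard Laplace/saddle estimate, that the integral is bounded by $C\, e^{(K(s_+)-1)t}$ times a harmless polynomial factor in $t$ absorbed by $\delta$-room, while the pointwise bound on the exponent along the contour at height $\tau$ is $K(s_+)-1+\tfrac{t}{2}K''(s_+)\tau^2+\cdots$; comparing $e^{(K(s_+(t,x))-1)t}$ with the leading term $e^{(K(q_0)-1)t}$ and Taylor expanding $K$ around $q_0$ (using $K(q_0)-K(s_+)=\tfrac12 K''(q_0)(s_+-q_0)^2+o((s_+-q_0)^2)$, since $K'(q_0)$ is not zero but one must be slightly more careful — in fact the correct comparison is between the value of $\phi$ at the pole $s=q_0$ and at the saddle $s=s_+$, and since $q_0$ lies strictly to the left of the saddle and $\phi$ is convex, $\phi(q_0,t,x)-\phi(s_+,t,x)\le \tfrac{t}{2}K''(q_0)(s_+-q_0)^2$ up to lower order) yields the relative error $\mathcal{O}\big(e^{-\frac{t K''(q_0)}{2}(s_+(t,x)-q_0)^2}\big)$ stated. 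Part~(i) is entirely symmetric: in $D_{p_0}^-$ we have $s_+(t,x)<p_0$, $U_0$ has a simple pole at $s=p_0$ with residue $b_0$ coming from~\eqref{initialdata:reg1bis} (the Mellin transform of $b_0 x^{-p_0}\mathbf{1}_{0<x<1}$ is $\tfrac{b_0}{s-p_0}$), and we push the contour leftward from $\Rea(s)=\nu$ to $\Rea(s)=s_+(t,x)<p_0$, crossing $s=p_0$ and collecting $b_0 x^{-p_0}e^{(K(p_0)-1)t}$, with the remaining steepest-descent integral along $\Rea(s)=s_+$ controlled exactly as before using~\eqref{initialdata:reg2bis}--\eqref{initialdata:reg3bis} for the decay and convexity of $K$ near $p_0$.

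The main obstacle I expect is making the saddle-point remainder estimate genuinely \emph{uniform} in $(t,x)$ over the whole domain $D_{q_0}^+$ (resp. $D_{p_0}^-$), rather than merely asymptotic for fixed $x$: the distance $s_+(t,x)-q_0$ can be arbitrarily small (near the boundary of the domain) or arbitrarily large, and in both regimes one must check that the polynomial-in-$t$ prefactors from the Laplace method, and the tail of the contour integral where the quadratic approximation of $K$ fails, are dominated by the stated exponential error uniformly — this is where the $\delta$-losses in Theorem~\ref{theorem1} and the hypotheses~\eqref{initialdata:reg3},~\eqref{initialdata:reg3bis} (controlling $u_0''$ against the intermediate exponents $\nu$) are essential. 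A secondary technical point is justifying the contour deformation past $q_0$ (resp. $p_0$) rigorously, i.e. that $U_0e^{(K(s)-1)t}x^{-s}$ decays fast enough at $\Imm(s)\to\pm\infty$ on \emph{every} intermediate vertical line — this follows from the two integration-by-parts giving $|U_0(\sigma+i\tau)|\le C(\sigma)|\tau|^{-2}$ with $C$ locally bounded, which is exactly what~\eqref{initialdata:int2}--\eqref{initialdata:int3} and~\eqref{initialdata:reg2}--\eqref{initialdata:reg3} (resp.~\eqref{initialdata:reg2bis}--\eqref{initialdata:reg3bis}) are designed to provide.
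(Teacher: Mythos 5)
Your strategy is essentially the paper's: deform the inverse-Mellin contour across the pole at $s=q_0$ (resp.\ $s=p_0$), collect the residue as the leading term $a_0x^{-q_0}e^{(K(q_0)-1)t}$ (resp.\ $b_0x^{-p_0}e^{(K(p_0)-1)t}$), and bound the remaining integral using the real part of the phase $\phi$. Two technical points are worth flagging, though. First, the paper does not push the contour to the saddle line $\Rea(s)=s_+(t,x)$ as you propose; it stops at an intermediate line $\Rea(s)=\nu'\in(q_0,s_+(t,x))$, which can be kept inside the strip $(q_0,r)$ where Lemma~\ref{extq} gives that the meromorphic continuation of $U_0$ is analytic and that $\int|U_0(\nu'+iv)|\,dv<\infty$. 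Nothing in the hypotheses prevents $s_+(t,x)$ from exceeding $r$ on $D_{q_0}^+$ (indeed $s_+\to\infty$ as $x\to 1^-$), so landing directly on $\Rea(s)=s_+$ needs extra justification that your write-up does not supply. Once one stops at $\nu'$, the remainder estimate is in fact simpler than a full Laplace/saddle-point computation: one uses $\Rea\big(K(\nu'+iv)\big)\le K(\nu')$ pointwise to get $|\mathcal R|\le e^{\phi(\nu',t,x)-\phi(q_0,t,x)}\int|U_0(\nu'+iv)|\,dv$, then Taylor-expands the real phase to compare $\phi(\nu')$ (or $\phi(s_+)$) with $\phi(q_0)$.

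Second, check the direction of your Taylor inequality. Since $K'''<0$ the function $K''$ is \emph{decreasing}, so the intermediate point $\xi\in(q_0,s_+)$ satisfies $K''(\xi)\le K''(q_0)$, which gives $\phi(q_0)-\phi(s_+)=\tfrac{t}{2}K''(\xi)(s_+-q_0)^2\le\tfrac{t}{2}K''(q_0)(s_+-q_0)^2$ — exactly what you wrote, but this is the wrong direction for the purpose: to dominate $e^{\phi(\nu')-\phi(q_0)}$ from above you need a \emph{lower} bound on $\phi(q_0)-\phi(s_+)$, and monotonicity of $K''$ supplies one with $K''(s_+)$, not $K''(q_0)$, in the exponent. (The paper's printed proof contains the very same slip, asserting $K''(\xi)\ge K''(s_+)\ge K''(q_0)$, so you are in good company, but this should be resolved if you want the explicit constant in the error.) Aside from these two points, your decomposition, the identification of the residues coming from the $x^{-q_0}$ and $x^{-p_0}$ behaviour in~\eqref{initialdata:reg1} and~\eqref{initialdata:reg1bis}, and the overall plan match the paper's proof.
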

Notice that if $p_0\leq p_1,$ the point $(i)$ of Theorem~\ref{theorem2} never happens. \\

Given the results stated in Theorems~\ref{theorem1} and~\ref{theorem2}, the only region which remains to study is the region $(t,x)$ defined by $p_0 < s_+ (t,x) < q_0.$ To do so, it is necessary to distinguish a very particular type of singular discrete measures $k_0$ whose support $\Sigma$ satisfies the condition that is defined below.
\\\\
\underline{Condition~H.}
We say that  a  subset $\Sigma$ of $(0, 1)$ satisfies Condition~H if:
\begin{eqnarray*}
&&\hskip -0.8cm\exists L \in \N^* \cup \{+\infty\},\, \exists \theta \in (0, 1),\,\exists (p_\ell) _{ \ell\in \N , \; \ell\leq L }\subset \N,\, 0<p_\ell <p_{\ell +1}\, \forall \ell \in \N, \;\ell \leq L-1,\\
&&\hskip -0.8cm\Sigma=\left\{\sigma_\ell\in(0, 1);\sigma_\ell=\theta^{p_\ell }\right\}, \qquad (p_\ell)_{0\leq \ell \leq L}\text{ are setwise coprime.}
\end{eqnarray*}
Note that the assumption of a coprime and ordered sequence of integers is not a restriction (we may always order a given sequence of integers, and up to a change $\theta$ to $\theta^{gcd(p_\ell)}$, we can choose a coprime sequence). See Propositions~\ref{propositioncondH1} to \ref{propositioncondH3} in the appendix for more properties of Condition~H, which lead to define a number $v_*$ and a set $Q$ by 
\begin{equation}
\label{def:vstar}
v_*=\frac {2\pi } {\log \theta}, \qquad Q=v_* \Z.
\end{equation}

Before we state the last main theorem let us remind that by the Lebesgue decomposition of a non negative bounded measure, the measure $k_0$ may be decomposed as follows:
$$
k_0(x)=g(x)dx+d\mu +d\nu
$$
where $g\in L^1(\R^+)$, $d\mu $ is a singular continuous measure  and $d\nu$ is a singular discrete measure (cf. for example \cite{MR0188387}).
\begin{theorem}
\label{theorem3}
Suppose that $u_0$ satisfies the conditions~(\ref{initialdata:int1}), (\ref{initialdata:int2}), (\ref{initialdata:int3}) and let $\varepsilon (t)$ be any function of $t$ such that $\varepsilon (t)\to 0$ but $t\varepsilon ^2(t)\to \infty$ as $t\to \infty$. \\\\
(a) Suppose that  the measure $k_0$ has a non zero absolutely continuous part or  is a discrete singular measure, whose support  $\Sigma $ does not contain $1$ and does not satisfy Condition~H.  Then for all $\delta >0$ arbitrarily small, there exists two functions $\gamma _{ \delta  } (\varepsilon )$ and $\omega _{ \delta  } (\varepsilon )$ such that:

\begin{eqnarray}
&&u(t, x)=x^{-s_+(t, x)}e^{(K(s_+(t, x))-1)t}\times \nonumber\\
&&\hskip 3cm \left(  \frac {U_0(s_+(t, x))+\omega _{ \delta  } (\varepsilon )} {\sqrt{2\pi t K''(s_+)}}\,\Theta_1(t)+\mathcal O\left(e^{- t\gamma  _{ \delta  }(\varepsilon (t)) }\right)  \right) \label{theorem3:1}\\
&&\hskip 1.5cm \hbox{as}\,\,t\to \infty,\,\,\hbox{uniformly for}\; x\;\hbox{such that}\,\,  p_0+\delta<s _+(t, x)<q_0-\delta, \nonumber\\
&&\Theta_1(t)=\left(1+
 \mathcal O\left(\frac {e^{-\frac {t\varepsilon^2 (t)} {2}K''(q_0)}} {\sqrt {t \varepsilon^2 (t)K''(q_0)}}\right)\right),\,\,\,\hbox{as}\,\,t\to \infty\label{theorem3:2}
\end{eqnarray}
where,  $\lim _{ \varepsilon \to 0}\varepsilon  ^{-2}\gamma _{ \delta  }  (\varepsilon  )=C _{ \delta  }$ for some constant $C _{ \delta  }\in (K^{''} (q_0) /2 ,  K^{''} (p_0)/2)$. The function  $\omega _{ \delta  }(\varepsilon )$, defined in~\eqref{eq:defomega}, satisfies
\begin{eqnarray}
\label{theorem3:omega}
&&|\omega _{ \delta  } (\varepsilon )|\le C (p_0+\delta,q_0-\delta)\left( \int _{R(\varepsilon )}^\infty \left|z^{q_0-\delta-1}u_0(z)\right|dz+\right. \nonumber \\
&&\hskip 4cm \left. +\int _0^{\frac {1} {R(\varepsilon )}} \left|z^{p_0+\delta-1}u_0(z)\right|dz+\varepsilon  \log R(\varepsilon )\right),
\end{eqnarray}
where the constant $C(p_0+\delta,q_0+\delta)$ depends on the integrability properties of $u_0$ (see~\eqref{intU0}), $R$ is any function such that $R(\varepsilon )\to \infty$, $\varepsilon \log R(\varepsilon )\to 0$ as $\varepsilon \to 0$.\\\\
(b)Suppose that  the measure $k_0$  is a discrete singular measure whose support satisfies Condition~H. Then for all $\delta >0$ arbitrarily small, there exists two functions $\gamma _{ \delta  } (\varepsilon,\theta  )$ and $S_{ \delta  } (\varepsilon )$ (where $\theta$ is given by Condition~H)  such that
\begin{eqnarray}
&&u(t, x)=x^{-s_+(t, x)}e^{(K(s_+(t, x))-1)t}\left(\frac{ \sum _{ k\in \Z }U_0(s_k)
 e^{-\frac {2i\pi k } {\log \theta }\log x}}{{\sqrt{2\pi t K''(s_+)}}}+\right. \nonumber \\
&&\hskip 6.5cm   \left. +\Theta_2(t, \varepsilon )+\mathcal O\left(e^{-\gamma _{ \delta  } (\varepsilon )t}\right)  \right)\!,
 \label{theorem3:3}\\
&&\hbox{as}\,\,t\to \infty,\,\,\hbox{uniformly for}\; x\;\hbox{such that}\,\,  p_0<s _+(t, x)<q_0\nonumber
\end{eqnarray}
where $s_k=s_++i k v_*$ and  $v_*$ is defined by~\eqref{def:vstar},
$\lim _{ \varepsilon \to 0}\varepsilon  ^{-2}\gamma  (\varepsilon , \theta )=C$ for some  constant $C$ in $(K^{''} (q_0) /2 ,  K^{''} (p_0)/2)$,
\begin{eqnarray}
&&\Theta_2(t, \varepsilon )= \left(1+\mathcal O\left(\frac {e^{-\frac {\varepsilon ^2} {2}tK''(q_0)}} {\sqrt {\varepsilon ^2 tK''(q_0)}}\right)\right)S _{ \delta  }(\varepsilon )\label{theorem3:4}\\ 
&&\vert S _{ \delta  }(\varepsilon ) \vert \leq C(p_0+\delta,q_0-\delta) \biggl(
\big(\varepsilon \log R(\varepsilon)\big)L (\varepsilon)+\nonumber\\
&&\hskip 1.5cm 
+L(\varepsilon )
\left(\int _{R(\varepsilon )}^\infty \left|z^{q_0-\delta-1}u_0(z)\right|dz+\int _0^{\frac {1} {R(\varepsilon )}} \left|z^{p_0+\delta-1}u_0(z)\right|dz\right)
\label{theorem3:5}
\end{eqnarray}
where $L$, $R$ are any functions of $\varepsilon $ such that $L(\varepsilon )\to \infty$,  $R(\varepsilon )\to \infty$, 
\\
\noindent
$\varepsilon L (\varepsilon)\log R(\varepsilon )\to 0$, 
$L(\varepsilon )\left(\int  _{ R(\varepsilon ) }^\infty z^{q_0-\delta-1}u_0(z)dz+ \int  _{0}^ {\frac{1}{R(\varepsilon )} } z^{p_0+\delta-1}u_0(z)dz\right)\to 0$ and $\varepsilon L (\varepsilon)\log R(\varepsilon )\to 0$ as $\varepsilon \to 0$.
\end{theorem}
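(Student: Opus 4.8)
The plan is to analyze the contour integral~\eqref{def:invmellin}, whose validity on the relevant strip is provided by Theorem~\ref{thm:existence}, by the saddle-point (steepest descent) method, keeping careful track of the $x$-dependence hidden in the saddle $s_+(t,x)$. First I would fix $(t,x)$ with $p_0+\delta < s_+(t,x) < q_0-\delta$ and recall from Lemma~\ref{lem:phi} that, because $K$ is strictly convex on $(p_1,+\infty)$, the phase $s\mapsto \phi(s,t,x)=-s\log x + tK(s)$ has along the vertical line $\Rea(s)=s_+$ a nondegenerate critical point exactly at $s=s_+$, where $\phi'=0$ and $\phi''(s_+)=tK''(s_+)>0$; moreover on that vertical line $\Rea\phi$ achieves its maximum at $s_+$, so the line $\Rea(s)=s_+$ is already a path of steepest descent. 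I would therefore shift the contour in~\eqref{def:invmellin} to $\nu=s_+(t,x)$ (legitimate by analyticity of $U_0$ on the open strip $(p_0,q_0)$ and of $K$ on $(p_1,\infty)$, together with the decay built into assumptions~\eqref{initialdata:int2}--\eqref{initialdata:int3}, which via integration by parts give $U_0(s)=O(|\Ima s|^{-2})$ uniformly on compact $\Rea$-strips), and write $s=s_+ + i\tau$.

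The core estimate is then: on the central piece $|\tau|\le\varepsilon(t)$ I expand $K(s_++i\tau) = K(s_+) - \tfrac{1}{2}K''(s_+)\tau^2 + O(|\tau|^3)$ and $U_0(s_++i\tau)=U_0(s_+)+O(|\tau|)$, which after the Gaussian integral $\int e^{-tK''(s_+)\tau^2/2}d\tau = \sqrt{2\pi/(tK''(s_+))}$ produces the announced leading term $x^{-s_+}e^{(K(s_+)-1)t}\,U_0(s_+)/\sqrt{2\pi t K''(s_+)}$; the condition $t\varepsilon^2(t)\to\infty$ is exactly what makes the tail of this truncated Gaussian, of relative size $e^{-t\varepsilon^2(t)K''(q_0)/2}/\sqrt{t\varepsilon^2(t)K''(q_0)}$, negligible, and it is this quantity that becomes $\Theta_1(t)-1$ in~\eqref{theorem3:2}. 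The cubic remainder and the linear error in $U_0$ I would absorb into $\omega_\delta(\varepsilon)$: truncating the $\tau$-integral that defines $U_0(s_+)$ at frequency $\varepsilon$ and size $R(\varepsilon)$ in the $x$-variable produces exactly the three contributions in the bound~\eqref{theorem3:omega} — the two tail integrals $\int_{R(\varepsilon)}^\infty$ and $\int_0^{1/R(\varepsilon)}$ of $u_0$ against the appropriate powers, and the $\varepsilon\log R(\varepsilon)$ term coming from the width of the truncation. The outer piece $|\tau|>\varepsilon(t)$ must be shown to contribute only $O(e^{-t\gamma_\delta(\varepsilon)})$ relative to the main term; here is where the two cases of the theorem split.

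The main obstacle — and the heart of case~(b) — is controlling $\Rea K(s_++i\tau)$ for $\tau$ bounded away from $0$. Since $K(s)=\int_0^1 k_0(z)z^{s-1}dz$, one has $\Rea\big(K(s_+)-K(s_++i\tau)\big) = \int_0^1 k_0(z)z^{s_+-1}\big(1-\cos(\tau\log z)\big)dz\ge 0$, and the question is whether this is bounded below by a positive constant (times $\varepsilon^2$, giving the $\varepsilon^{-2}\gamma_\delta(\varepsilon)\to C_\delta$ asymptotics). If $k_0$ has a nonzero absolutely continuous part, Riemann--Lebesgue-type reasoning forces the integral to stay bounded away from $0$ uniformly on $\varepsilon\le|\tau|\le$ any bound; for a discrete singular $k_0=\sum c_\ell\delta_{\sigma_\ell}$ with $1\notin\Sigma$ and $\Sigma$ violating Condition~H, I would use that the exponents $\log\sigma_\ell$ are not all rational multiples of a common period, so that $\cos(\tau\log\sigma_\ell)$ cannot equal $1$ simultaneously for all $\ell$ except at $\tau=0$ — this is precisely the negation of Condition~H — and a compactness argument closes case~(a). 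When $\Sigma$ does satisfy Condition~H with period giving $v_*=2\pi/\log\theta$ and $Q=v_*\Z$ (Propositions~\ref{propositioncondH1}--\ref{propositioncondH3}), the function $\tau\mapsto\Rea K(s_++i\tau)$ is periodic with period $v_*$ and attains its maximum $K(s_+)$ at every point of $Q$; the steepest-descent analysis must then be carried out simultaneously at the family of saddles $s_k=s_++ikv_*$, $k\in\Z$, each contributing a Gaussian of the same width, which assembles into the oscillatory sum $\sum_{k\in\Z}U_0(s_k)e^{-2i\pi k\log x/\log\theta}/\sqrt{2\pi tK''(s_+)}$ of~\eqref{theorem3:3}; convergence of this series follows from the $|\Ima s|^{-2}$ decay of $U_0$. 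The remaining error terms $\Theta_2$ and $S_\delta$, and the bound~\eqref{theorem3:5}, arise from the same truncation bookkeeping as in case~(a) but now summed over the $O(L(\varepsilon))$ saddles that fall within the window of width $\varepsilon L(\varepsilon)$, which explains the extra factor $L(\varepsilon)$ throughout~\eqref{theorem3:5} and the constraint $\varepsilon L(\varepsilon)\log R(\varepsilon)\to 0$. Finally, uniformity in $x$ across the strip $p_0+\delta<s_+<q_0-\delta$ follows because all constants above depend on $s_+$ only through $K''(s_+)$, which is bounded above and below on that compact range of $s_+$, and the matching with Theorems~\ref{theorem1} and~\ref{theorem2} at the endpoints $s_+\to p_0,q_0$ is built into the definitions of $\gamma_\delta$ via $C_\delta\in(K''(q_0)/2,K''(p_0)/2)$.
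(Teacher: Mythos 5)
Your outline matches the paper's proof essentially point for point: same shift of the contour in~\eqref{def:invmellin} to $\Rea(s)=s_+(t,x)$, same near/far split of the line at $|\Ima s|=\varepsilon$, same decomposition $U_0(s)=U_0(s_+)+(U_0(s)-U_0(s_+))$ on the near piece, the same three auxiliary lemmas (Taylor expansion of $K$ near $s_+$, the Riemann--Lebesgue/compactness argument ruling out recurrence of $\Rea K$ when Condition~H fails, and periodicity producing the saddle family $s_k$ when it holds), and the same truncation-at-$R(\varepsilon)$ bookkeeping behind $\omega_\delta$ and $S_\delta$. The only cosmetic discrepancy is that in the paper the $\mathcal O(\varepsilon^3)$ phase remainder is absorbed into the $\Theta_1$ Gaussian-tail error rather than into $\omega_\delta(\varepsilon)$, which in the paper records only the variation $\sup_{|v|\le\varepsilon}|U_0(s_++iv)-U_0(s_+)|$, as in~\eqref{eq:defomega}.
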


\begin{remark}[Leading terms of the solution] 

The two functions $a_0\, x^{-q_0}\, e^{({ K }(q_0)-1)t}$ and $b_0 x^{-p_0}e^{({ K }(p_0)-1)t}$ that appear in the long-time  behaviour of the solution $u$ of System~(\ref{eq:frag})(\ref{eq:fragdata}) in Theorem~\ref{theorem1}, and  in Theorem~\ref{theorem2} are self-similar solutions of the equation (cf. Remark \ref{selfsim}). \\Since $q_0>0$ and ${ K} (q_0) <1$, it follows from Theorem~\ref{theorem1} and from the point (ii) of  Theorem~\ref{theorem2} that, as $t\to \infty$, the leading terms of the solution $u(t, x)$ decays exponentially fast uniformly in the domain $x>1$ and $q_0<s_+(t, x)$.  Since $xu(t, x)$ converges to a Dirac mass at $x=0$, the long-time asymptotic behaviour of the leading term of $u(t, x)$ for $x\in (0, 1)$ is more involved. In particular in the point (i) of Theorem~\ref{theorem2} and Formulas~(\ref{theorem3:1}) and (\ref{theorem3:3}) of Theorem~\ref{theorem3} some balance exists between the power law of $x$ and the time exponential term. This is described in some detail in Sec
 tion~\ref{sec:asymptot}. Let us just say here that the  scaling law of the Dirac mass formation in $t$, $x$ variables is of exponential type in all the cases.\end{remark}

The convergence of the solution $u$ to the corresponding  self-similar solutions,
 $a_0\, x^{-q_0}\, e^{({ K }(q_0)-1)t}$ or $b_0 x^{-p_0}e^{({ K }(p_0)-1)t}$
given by  Theorem~\ref{theorem1} and Theorem~\ref{theorem2} takes place at an exponential rate, uniformly for $x$ in the domains $x>1$,  or $x\in (0, 1)$ and $s_+(t, x)<p_0-\delta $ or $s_+(t, x)>q_0+\delta $ for any $\delta >0$ arbitrarily small.
\\
On the other hand by Theorem~\ref{theorem3}, for any $\delta >0$,    
\begin{equation}
\label{asth1}
u(t, x)\Omega (t, x)-U_0(s_+(t, x))=\omega  _{ \delta  }(\varepsilon(t) )+
\mathcal O\left( e^{- t\gamma  _{ \delta  }(\varepsilon (t))}+ e^{-\frac {t\varepsilon^2 (t)} {2}K''(q_0)} \right),\,t\to \infty
\end{equation} 
uniformly on $p_0+\delta <s_+(t, x)<q_0-\delta $ where
$$
\Omega(t, x) =x^{s_+(t, x)}\frac {\sqrt{2\pi t K''(s_+(t, x)}} {e^{(K(s_+(t, x))-1)t}}.
$$
By the properties of the function $\gamma  _{ \delta  }(\varepsilon )$, we have
$$
t\gamma  _{ \delta  }(\varepsilon )=C _{ \delta  }t\varepsilon ^2(t)+o\left(t\varepsilon ^2(t) \right),\,\,\,\hbox{as}\,\,t\to \infty
$$
and the first summand in the error term of (\ref{asth1}) decays exponentially in time. For the second summand we obtain a decay 
like $e^{-Ct^{a}}$ for some constants $C>0$ and $a>0$ by imposing $t\varepsilon ^2(t)\sim t^{a}$ as $t\to \infty$.  Since $\varepsilon (t)\to 0$, we must have $a<1$.  But the decay in time of the term $\omega  _{ \delta  }(\varepsilon (t)$ has no reason to be exponentially fast and we show in the following proposition that this may be false.
\begin{proposition}
\label{proposition1}
There exists a  kernel $k_0$, and  initial data $u_0$, satisfying the hypothesis of Theorem ~\ref{theorem3},  and there exists a constant $c>0$ such that  $\Omega u(t, x)-U_0(s_+(t, x))$  tends to zero at most algebraically as $t \to \infty$ along the curve $-\log x=ct$.
\end{proposition}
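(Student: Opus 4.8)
The plan is to exhibit an explicit pair $(k_0,u_0)$ for which the term $\omega_\delta(\varepsilon(t))$, as estimated in~\eqref{theorem3:omega}, dominates the exponentially small errors and decays no faster than a fixed negative power of $t$ along a well-chosen ray $-\log x = ct$. Since along such a ray the relation~\eqref{alphaplus20} forces $s_+(t,x)=(K')^{-1}(-c)$ to be a constant $s_* \in (p_0,q_0)$, by choosing $c$ with $p_0+\delta < s_* < q_0-\delta$ we stay in the domain where Theorem~\ref{theorem3}(a) applies. On that ray $\Omega(t,x)u(t,x)-U_0(s_+(t,x))$ is governed, up to terms $\mathcal{O}(e^{-t\gamma_\delta(\varepsilon)}+e^{-\frac{t\varepsilon^2}{2}K''(q_0)})$, by $\omega_\delta(\varepsilon(t))$; the whole content of the proposition is that for a suitable choice of the admissible free function $\varepsilon(t)$ the bound~\eqref{theorem3:omega} is also a genuine lower bound, of size $t^{-a}$ for some $a>0$.

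First I would fix the kernel $k_0$ to be a simple finite discrete measure with support not satisfying Condition~H (so that part (a) of Theorem~\ref{theorem3} is the relevant one) and compute $K(s)$, $K'$, $K''$ explicitly so that $p_1$, $s_*$, and the constants $K''(p_0)$, $K''(q_0)$ are all known; a two-atom measure suffices. Then I would design $u_0$ so that its behaviour at $0$ and at $\infty$ has prescribed exponents $p_0$ and $q_0$ (with $[1,2]\subset I(u_0)$) and, crucially, so that the tail integrals appearing in~\eqref{theorem3:omega}, namely $\int_{R}^\infty z^{q_0-\delta-1}u_0(z)\,dz$ and $\int_0^{1/R} z^{p_0+\delta-1}u_0(z)\,dz$, decay only like a power of $R$. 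This forces $u_0$ to have heavy "algebraic'' corrections to its leading power laws rather than the faster decay that~\eqref{initialdata:reg1}--\eqref{initialdata:reg3bis} would allow in the best case; for instance taking $u_0(x)\sim a_0 x^{-q_0} + a_1 x^{-q_0-\eta}$ near infinity with $\eta$ small, so that $\int_R^\infty z^{q_0-\delta-1}u_0(z)\,dz \asymp R^{\delta-\eta}$. One then chooses $\delta<\eta$, and chooses $R(\varepsilon)$ so that $\varepsilon\log R(\varepsilon)\to 0$ but $R(\varepsilon)\to\infty$, e.g. $R(\varepsilon)=\varepsilon^{-1}$; with this choice each term of~\eqref{theorem3:omega} is a power of $\varepsilon$ times possibly a log, so $|\omega_\delta(\varepsilon)|\asymp \varepsilon^{\beta}$ for an explicit $\beta>0$.

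Next I would show that the bound in~\eqref{theorem3:omega} is tight from below for this $u_0$. The definition of $\omega_\delta$ referenced as~\eqref{eq:defomega} (in the body of the paper) arises from truncating the Mellin inversion contour to a finite window and estimating the discarded tails; the discarded tails are honestly of the stated order because $u_0$ was built to make the tail integrals algebraic, and there is no cancellation, so a matching lower bound holds for $c$ chosen generically (one may shift $c$ slightly, i.e. $s_*$ slightly, to avoid any accidental zero of the relevant Mellin-type integral). Then I would select $\varepsilon(t)$ optimally: we need $\varepsilon(t)\to0$ and $t\varepsilon^2(t)\to\infty$, and we want $e^{-t\gamma_\delta(\varepsilon(t))}$ and $e^{-\frac{t\varepsilon^2(t)}{2}K''(q_0)}$ to be $o(\varepsilon(t)^\beta)$; since $t\gamma_\delta(\varepsilon)\sim C_\delta t\varepsilon^2$ and $C_\delta, K''(q_0)/2>0$, it suffices that $t\varepsilon^2(t)/\log(1/\varepsilon(t))\to\infty$, which is compatible with $\varepsilon(t)\to0$ — e.g. $\varepsilon(t)=t^{-1/4}$ gives exponential-in-$t^{1/2}$ decay of the error terms while $|\omega_\delta(\varepsilon(t))|\asymp t^{-\beta/4}$. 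Taking $a=\beta/4$ and $c=-K'^{-1}$ of the appropriate value then yields the claim: $\Omega u(t,x)-U_0(s_+(t,x))$ is $\asymp t^{-a}$, in particular does not decay faster than algebraically, along $-\log x = ct$.

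The main obstacle I expect is verifying that the upper bound~\eqref{theorem3:omega} is accompanied by a genuine lower bound of the same order, i.e. that there is no hidden cancellation in the definition~\eqref{eq:defomega} of $\omega_\delta$. This requires unwinding how $\omega_\delta$ is produced in the proof of Theorem~\ref{theorem3} — it is the error from replacing $U_0(s)$ by its value $U_0(s_+)$ on a finite arc and dropping the two infinite tails of the inverse Mellin integral — and checking that, for the explicit $u_0$ with algebraic corrections, the dropped tail contributions are asymptotically equivalent to (not merely bounded by) a power of $\varepsilon$. Because $u_0\ge 0$ and the corrections are of one sign, the tail integrals $\int_R^\infty z^{q_0-\delta-1}u_0(z)\,dz$ are manifestly $\asymp R^{\delta-\eta}$ with no cancellation; translating this into a lower bound on $|\omega_\delta|$ itself (rather than on the bound) is the only delicate point, and it is handled by a direct estimate of the discarded contour integral along $\mathrm{Re}\,s = s_*$, using that $U_0(s)=\mathcal{M}_{u_0}(s)$ inherits an algebraic singularity in $\mathrm{Im}\,s$ from the heavy correction term. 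Everything else — the computation of $K$, the admissibility checks~\eqref{initialdata:int1}--\eqref{initialdata:reg3bis} for the chosen $u_0$, and the bookkeeping of the three competing error scales — is routine.
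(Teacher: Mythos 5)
Your overall plan correctly isolates the crux of the proposition: along a ray $-\log x = ct$ we have $s_+(t,x)$ frozen at a constant $s_*\in(p_0,q_0)$, the exponential error terms in Theorem~\ref{theorem3} can be made negligible by a suitable $\varepsilon(t)$, and so the size of $\Omega u - U_0(s_+)$ is controlled by the remaining contribution. However, the mechanism you propose for getting an \emph{algebraic lower bound} on that contribution does not hold up, and the construction diverges substantially from the paper's.

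First, the paper simply takes $u_0$ compactly supported in $(2,3)$, so that $p_0=-\infty$, $q_0=+\infty$ and Theorem~\ref{theorem3}(a) applies with no restriction on $s_+$; the heavy-tail engineering you propose is an unnecessary complication and is in fact based on a misconception. For any $s_*$ strictly inside the analyticity strip $(p_0,q_0)$, the function $v\mapsto U_0(s_*+iv)$ is analytic at $v=0$; its derivatives $U_0^{(k)}(s_*)=\int u_0(z)z^{s_*-1}(\log z)^k\,dz$ are finite no matter how heavy (or how light) the tails of $u_0$ are, as long as $s_*<q_0$. There is no ``algebraic singularity in $\Ima s$'' induced by the correction term $a_1 x^{-q_0-\eta}$; that term only moves a pole of the \emph{meromorphic extension} of $U_0$ past $q_0$, which is irrelevant when $\Rea s = s_* < q_0$. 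Consequently $\omega(\varepsilon)=\sup_{|v|\le\varepsilon}|U_0(s_*+iv)-U_0(s_*)|\sim |U_0'(s_*)|\,\varepsilon$ for essentially every admissible $u_0$; the tail integrals in~\eqref{theorem3:omega} merely degrade the constant in this bound, they do not create the algebraic rate.

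Second, the assertion that ``there is no cancellation'' is the opposite of what actually happens, and this is the genuine hard step. The quantity one must lower-bound is the integral
$I_2=\frac{1}{2\pi i}\int_{-\varepsilon}^{\varepsilon}\bigl(U_0(s_*+iv)-U_0(s_*)\bigr)e^{\phi(s_*+iv,t,x)}\,dv$,
whose integrand does oscillate: the leading part $iv\,U_0'(s_*)$ is purely imaginary, and its pairing with $\cos(\cdot)/\sin(\cdot)$ factors of $e^{\phi}$ produces nontrivial parity cancellations. The paper's proof resolves this by Taylor-expanding all ingredients to third order in $v$ (and in $\varepsilon$), using $u_0\ge 0$ real, $K'''(s_*)<0$, and the reality of $K,K',K''$ to extract the \emph{real part} $A(v,t,x)$ of the integrand explicitly; it is only after this bookkeeping that one sees the sign-definite piece $-\tfrac{\kappa_1}{2}v^2 + \kappa_2\kappa_5 v^4 t$, whence $\int_{-\varepsilon}^{\varepsilon}A\,dv\approx -\tfrac{\kappa_1}{6}\varepsilon^3+\tfrac{\kappa_2\kappa_5}{5}t\varepsilon^5$ (both terms of one sign since $\kappa_5<0$), giving $|I_2|\gtrsim\varepsilon^3(t)$, not $\varepsilon(t)$. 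Your proposal stops at ``a direct estimate of the discarded contour integral'' without performing this expansion, and so never produces the needed lower bound. Relatedly, your trial $\varepsilon(t)=t^{-1/4}$ is incompatible with the approximation step: one needs $t\varepsilon^3(t)\to 0$ to justify replacing $\cos$, $\sin$ by their first Taylor terms, which forces $\varepsilon(t)=o(t^{-1/3})$ while still keeping $t\varepsilon^2(t)\to\infty$ (e.g.\ $\varepsilon(t)=t^{-2/5}$). Finally, note the paper additionally fixes $c$ by solving $c\,(K')^{-1}(-c)+K((K')^{-1}(-c))=0$, i.e.\ $\phi(s_*,t,x)=0$ on the ray; this is a convenience that lets one drop the $e^{\phi(s_*)}$ prefactor inside the estimates, and is easier to realize with a known, explicit two-atom $k_0$ as you suggest.
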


\begin{remark}
The hypothesis on the measure $k_0$ stated in Theorem~\ref{theorem3} are rather restrictive. The cases where $k_0$ has an absolutely continuous part or is a singular discrete measure without $1$ as a limit point are covered, but  not the case  when the measure $k_0$ has no absolutely continuous part but has a singular continuous one  or where $1$ is a limit point.
\end{remark}
\begin{remark}
Condition~H may be seen as a generalization of the ``mitotic" fragmentation kernel, where $k_0(x)=2\delta_{x=\f{1}{2}}.$ Similar assumptions have been found in other related studies, see~\cite{BDE}, Appendix~D. Equation~\eqref{theorem3:3} can be interpreted in terms of Fourier series in $y=\log x$, and  exhibits a limit which has a $\log \theta-$ periodic part in $y.$ The Poisson summation formula applied to the function $u_0(e^y) e^{s_+ y},$ for $s_+$ fixed, leads at first order to
$$u(t,x) \sim \log \theta  \f{e^{(K(s_+)-1)t}}{\sqrt{2\pi t K''(s_+)}} \sum\limits_{n\in \Z} u_0 (\theta^n x).$$
\end{remark}

Corollary~\ref{cor:process} relates our results with those contained in~\cite{MR2017852}. It  describes the behavior of the two following scalings of $u$: 
$$r(t,y)dy=te^{2ty}u(t,e^{ty})dy$$
and
$$\tilde r(t,z)dz=r\biggl(t, y_0+\f{\sigma z}{\sqrt{t}}\biggr) \f{\sigma dz}{\sqrt{t}}, $$
where  $y_0:=K'(2)$ and $\sigma^2:=K''(2).$ These two functions correspond to the laws of some random measures
$\rho_t(dy)$ and $\tilde\rho_t(dy)$ respectively,  that are considered in \cite{MR2017852} in order  to study the fragmentation process, whose law satisfies Equation~(\ref{eq:frag}) (see Section~\ref{subsec:process} for some details).
It follows from the convergence in probability proved in Theorem 1 of \cite{MR2017852} that the random measures $\rho_t(dy)$ and $\tilde\rho_t(dy)$ converge in law towards $\delta  _{ -\mu  }$ and  ${\mathcal  N}(0,1)$ as $t\to \infty$.We show in   Corollary \ref{cor:process} how it is possible to recover this result, from Theorems~\ref{theorem1}, ~\ref{theorem2} and~\ref{theorem3} in terms  of the rescaled functions $r$ and $\tilde r$.

\begin{corollary}\label{cor:process}
Let us define, for  all $t>0$ and $y\in \R$:
\begin{equation}\label{def:r}
r(t,y):=t
e^{2ty}u(t,e^{ty}),\qquad \tilde r(t,z):=r(t,y_0+ \f{\sigma z}{\sqrt{t}})\f{\sigma}{\sqrt{t}},
\end{equation}
with $y_0:=K'(2)$ and $\sigma^2:=K''(2).$ 
Under the assumptions of Theorems~\ref{theorem1}, ~\ref{theorem2} and~\ref{theorem3} a), $r(t,\cdot) \rightharpoonup \delta_{K'(2)}U_0(2)$ and $\tilde r(t,\cdot) \rightharpoonup U_0(2)G,$ with $G(z)=\f{e^{-\f{\cdot^2}{2}}}{\sqrt{2\pi} }$, in the weak sense of measures: for any bounded continuous function  $\phi$ on $\R,$ we have
$$\int\limits_{-\infty}^{+\infty} \phi(y)r(t,y)dy \to U_0(2) \phi(K'(2))\, \hbox{and}\, \int\limits_{-\infty}^{+\infty} \phi(z)\tilde r(t,z)dz \to U_0(2) \int\limits_{-\infty}^{+\infty} \phi(z) \f{e^{-\f{z^2}{2}}}{\sqrt{2\pi} }dz,$$
with $U_0(2)=\int\limits_0^\infty x u_0(x)dx$ the initial mass. 
\end{corollary}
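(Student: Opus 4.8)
The plan is to reduce everything to a Laplace-type analysis of the explicit formula~\eqref{theorem3:1} near the single point where the two rescalings concentrate, and to let the mass conservation~\eqref{prop:cons} dispose of all the rest. First I would perform the change of variables $x=e^{ty}$: then $\log x/t=y$, so $s_+(t,e^{ty})=(K')^{-1}(y)$ depends on $y$ alone (I keep the notation $s_+(y)$), with $s_+(y_0)=2$ for $y_0=K'(2)$, and $s_+$ real-analytic and strictly increasing near $y_0$ with $s_+'(y_0)=1/K''(2)=1/\sigma^2$. Under this substitution $\int_\R\phi(y)r(t,y)\,dy=\int_0^\infty\phi(\log x/t)\,x\,u(t,x)\,dx$, so with $\phi\equiv1$ and $U_0(2)=\int_0^\infty xu_0(x)\,dx$ the conservation~\eqref{prop:cons} gives the exact identities
\[
\int_\R r(t,y)\,dy=\int_\R\tilde r(t,z)\,dz=\int_0^\infty xu(t,x)\,dx=U_0(2)=:M,\qquad t>0,
\]
while $r,\tilde r\ge0$ since $u\ge0$. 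Inserting~\eqref{theorem3:1} and using that the prefactor $t\,e^{2ty}x^{-s_+}e^{(K(s_+)-1)t}$ equals $t\,e^{t\Phi(s_+(y))}$, where $y=K'(s_+(y))$ and
\[
\Phi(s):=(2-s)K'(s)+K(s)-1,
\]
one gets $r(t,y)=t\,e^{t\Phi(s_+(y))}\Big[\f{U_0(s_+(y))+\omega_\delta(\varepsilon(t))}{\sqrt{2\pi t\,K''(s_+(y))}}\,\Theta_1(t)+\mathcal O\big(e^{-t\gamma_\delta(\varepsilon(t))}\big)\Big]$. A direct computation gives $\Phi(2)=0$, $\Phi'(s)=(2-s)K''(s)$ and $\Phi''(2)=-K''(2)=-\sigma^2<0$; since $K$ is strictly convex on $(p_1,\infty)$, $\Phi$ attains its maximum, equal to $0$, only at $s=2$, and $y\mapsto\Phi(s_+(y))$ has a nondegenerate maximum $0$ at $y_0$ with second derivative $-1/\sigma^2$.

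Next I would prove that $r$ concentrates at $y_0$. Fix $\delta>0$ small enough that $p_0+\delta<2<q_0-\delta$; this is allowed because~\eqref{initialdata:reg1}, \eqref{initialdata:reg1bis} together with~\eqref{initialdata:int1} force $I(u_0)$ to equal the open interval $(p_0,q_0)$ with $p_0<1<2<q_0$. Then fix $\eta>0$ small enough that $s_+$ maps $(y_0-\eta,y_0+\eta)$ into $(p_0+\delta,q_0-\delta)$, so that Theorem~\ref{theorem3}~(a) holds uniformly on $\{|y-y_0|\le\eta\}$. Changing variables $s=s_+(y)$ (hence $dy=K''(s)\,ds$) and using $\omega_\delta(\varepsilon(t))\to0$ (by~\eqref{theorem3:omega}, since $[p_0+\delta,q_0-\delta]\subset I(u_0)$), $\Theta_1(t)\to1$ and $t\,e^{-t\gamma_\delta(\varepsilon(t))}\to0$ (because $t\gamma_\delta(\varepsilon(t))\sim C_\delta\,t\varepsilon^2(t)\to\infty$), one is left with
\[
\int_{|y-y_0|\le\eta}r(t,y)\,dy=(1+o(1))\,\sqrt{\tfrac{t}{2\pi}}\int_{s_+(y_0-\eta)}^{s_+(y_0+\eta)}e^{t\Phi(s)}\,U_0(s)\sqrt{K''(s)}\,ds.
\]
Laplace's method at the nondegenerate interior maximum $s=2$ (value $0$, curvature $\sigma^2$, continuous amplitude $U_0(2)\sqrt{\sigma^2}$) makes the last integral $\sim U_0(2)\,\sigma\,\sqrt{2\pi/(t\sigma^2)}$, whence $\int_{|y-y_0|\le\eta}r(t,y)\,dy\to U_0(2)=M$. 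Together with $\int_\R r(t,\cdot)=M$ and $r\ge0$, this forces $\int_{|y-y_0|>\eta}r(t,y)\,dy\to0$ for every $\eta>0$; then for any bounded continuous $\phi$, splitting $\int\phi\,r$ at $|y-y_0|=\eta$, bounding the outer part by $\|\phi\|_\infty\int_{|y-y_0|>\eta}r$ and the inner one through the oscillation of $\phi$ on $(y_0-\eta,y_0+\eta)$, and letting $\eta\to0$, gives $\int_\R\phi(y)r(t,y)\,dy\to M\,\phi(y_0)=U_0(2)\,\phi(K'(2))$, the first claim.

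For $\tilde r$ I would argue pointwise and then invoke Scheffé's lemma. Fix $z\in\R$ and set $y=y_0+\sigma z/\sqrt t$, so $y\to y_0$ and, for $t$ large, $s_+(y)\in(p_0+\delta,q_0-\delta)$, so that Theorem~\ref{theorem3}~(a) applies. From the second-order expansion of $\Phi\circ s_+$ at $y_0$, $t\Phi(s_+(y))=-\tfrac{t(y-y_0)^2}{2\sigma^2}+\mathcal O(t|y-y_0|^3)=-\tfrac{z^2}{2}+\mathcal O(|z|^3/\sqrt t)\to-\tfrac{z^2}{2}$, while $s_+(y)\to2$, $U_0(s_+(y))\to U_0(2)$, $K''(s_+(y))\to\sigma^2$, $\Theta_1(t)\to1$, $\omega_\delta(\varepsilon(t))\to0$ and $\sqrt t\,e^{-t\gamma_\delta(\varepsilon(t))}\to0$. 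Substituting into $\tilde r(t,z)=\tfrac{\sigma}{\sqrt t}\,r(t,y)=\sigma\sqrt t\,e^{t\Phi(s_+(y))}\big[\tfrac{U_0(s_+(y))+\omega_\delta(\varepsilon(t))}{\sqrt{2\pi t\,K''(s_+(y))}}\Theta_1(t)+\mathcal O(e^{-t\gamma_\delta(\varepsilon(t))})\big]$ gives $\tilde r(t,z)\to\sigma\,e^{-z^2/2}\,\tfrac{U_0(2)}{\sqrt{2\pi\sigma^2}}=U_0(2)\,G(z)$ for every $z$. Since $\tilde r(t,\cdot)\ge0$, $\int_\R\tilde r(t,z)\,dz=M=U_0(2)=\int_\R U_0(2)G$, and $\tilde r(t,\cdot)\to U_0(2)G$ pointwise, Scheffé's lemma upgrades this to $\|\tilde r(t,\cdot)-U_0(2)G\|_{L^1(\R)}\to0$, hence in particular $\int_\R\phi(z)\tilde r(t,z)\,dz\to U_0(2)\int_\R\phi(z)G(z)\,dz$ for every bounded continuous $\phi$, the second claim.

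The main obstacle is the Laplace step and the uniformity bookkeeping behind it: identifying $\Phi$ and checking its nondegenerate maximum, performing the $s\leftrightarrow y$ substitution with Jacobian $K''$, and verifying that the correction factors $\Theta_1(t)$, $\omega_\delta(\varepsilon(t))$ and the exponentially small remainder of Theorem~\ref{theorem3}~(a) are uniform on compact subsets of $(p_1,\infty)$ and converge to their limits — this is precisely where the freedom in choosing the auxiliary functions $\varepsilon(t)$ and $R(\varepsilon)$ of Theorem~\ref{theorem3} is exploited. By contrast, the escape of mass away from $y_0$ is ruled out for free by~\eqref{prop:cons}, so that only the bulk formula~\eqref{theorem3:1} really enters the argument, the hypotheses of Theorems~\ref{theorem1} and~\ref{theorem2} being used merely to pin down the behaviour of $u_0$ at $0$ and at $\infty$.
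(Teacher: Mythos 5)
Your proof is correct, but it follows a genuinely different and shorter route than the paper's. The paper's own proof splits $\int\phi\,r$ into five intervals $(-\infty,K'(p_0))$, $(K'(p_0),K'(p_0+\delta))$, $(K'(p_0+\delta),K'(q_0-\delta))$, $(K'(q_0-\delta),K'(q_0))$, $(K'(q_0),\infty)$, handles the middle one by Theorem~\ref{theorem3}~(a) and Laplace's method exactly as you do, but then estimates each of the four outer pieces separately: the first and last via Theorem~\ref{theorem2}~(i)/(ii) and an auxiliary function $H(\nu)=K(\nu)+(2-\nu)K'(\nu)-1$, the second and fourth by going back to the Mellin representation~\eqref{def:invmellin} and studying auxiliary functions $F(\nu)=K(\nu)+(2-\nu)K'(p_0+\delta)-1$ and $G(\nu)=K(\nu)+(2-\nu)K'(q_0-\delta)-1$. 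Your key observation — that mass conservation~\eqref{prop:cons} gives the \emph{exact} identity $\int_\R r(t,y)\,dy=U_0(2)$ for all $t$, so that once Laplace's method shows $\int_{|y-y_0|\le\eta}r\to U_0(2)$, non-negativity of $r$ forces the outer mass to vanish automatically — replaces all four of those outer estimates at once. This is slicker, and for $\tilde r$ your use of pointwise convergence plus Scheffé's lemma, which the paper merely gestures at (``follows from the same arguments''), actually gives the stronger $L^1$ convergence $\|\tilde r(t,\cdot)-U_0(2)G\|_{L^1}\to0$ rather than only weak convergence. The trade-off is that the paper's region-by-region bounds produce explicit exponential decay rates~\eqref{S4E100}, \eqref{S4E101}, \eqref{S4E1}, \eqref{S4E2} for the mass in the outer regions, information your argument does not yield, and which the paper explicitly uses afterwards in the remark on convergence rates. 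One bookkeeping point you handled correctly but should keep visible: the regularity hypotheses~\eqref{initialdata:reg1}, \eqref{initialdata:reg1bis} are what guarantee $p_0<1<2<q_0$ \emph{strictly} (since they force $p_0,q_0\notin I(u_0)$), and this strictness is exactly what makes $s=2$ an interior maximum for the Laplace step and lets you choose $\delta$ with $p_0+\delta<2<q_0-\delta$.
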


\begin{remark}
The case $\tau (x)=x^{1+\gamma}$ and $B(x)=x^\gamma$ may be studied following similar lines. The two equations (growth fragmentation and pure fragmentation) are not  related anymore as they were before, when $\gamma =0$.  If we take Mellin transform in the pure fragmentation equation we obtain:

$$
\f{\p}{\p t} U(t, s)=(1-K(s))U(t, s+\gamma ).
$$
A similar calculation may be done for the growth-fragmentation equation. Although these are not ordinary differential equations anymore, since they are not local with respect to the $s$ variable, they may still be explicitly solved using a Wiener Hopf type argument.
\end{remark}
It is straightforward to deduce from Theorem~\ref{theorem1}, Theorem~\ref{theorem2} and Theorem~\ref{theorem3} the asymptotic behaviour of the solution $v$ of the growth fragmentation equation. But this has to be done  in terms of $e^{ct}v(t, xe^{ct})$.  As it is shown in detail in  Section~\ref{sec:croisfrag}, if the method is unchanged, the shape of the interesting domain is   modified.

The plan of the remaining of this article is as follows. In Section~\ref{sec:explicit} we explicitly solve Equation~(\ref{eq:frag}) under suitable conditions on the initial data $u_0$ and the kernel $k_0$, using the Mellin transform. In Section~\ref{sec:genconv} we prove  that the solutions $u$ of Equation~(\ref{eq:frag}) obtained in Section~\ref{sec:explicit} are such that $xu(t, x)$ converges to a Dirac mass at the origin as $t\to \infty$. We also prove that Equation~(\ref{eq:frag}) has no self-similar solutions of the form $u(t,x)=f(t)\Phi(xg(t))$ for $\Phi \in L((1+x)dx)$.
In Section~\ref{sec:main} we prove  Theorem \ref{theorem1},  Theorem \ref{theorem2}, Theorem \ref{theorem3}, Proposition \ref{proposition1} and Corollary \ref{cor:process}. Then, we relate our results with some of those obtained in \cite{MR2017852}.  In Section~\ref{sec:asymptot}, we describe in some detail the regions of the $(x, t)$ plane where the solutions of the fragmentation equation~(\ref{eq:frag}) and the growth fragmentation equation~(\ref{eq:transp:frag}) are concentrated as $t$ increases. We also present some numerical simulations where such regions may be observed.

\section{Explicit solution to the fragmentation equation~(\ref{eq:frag}).}\label{sec:explicit}

In this section we rigorously  perform the arguments  presented in the introduction leading to the explicit formula~(\ref{def:invmellin}). 

Of course,  it is possible to obtain existence and uniqueness of suitable types of solutions to the Cauchy problem for the fragmentation equation~(\ref{eq:frag}) with kernel $k(x,y)$ given by~(\ref{def:probak}) under much weaker assumptions than we are assuming in Theorem~\ref{thm:existence}. It is easily seen for example that if $u_0$ satisfies~(\ref{initialdata:int1}), there exists a unique mild solution $u\in C([0, \infty); L^1((1+x)dx))\cap C^1((0, \infty); L^1((1+x)dx))$. More general situations  are considered in  \cite{Haas2003245}. The conditions~(\ref{initialdata:int2}) and (\ref{initialdata:int3}) are imposed in order to have the representation formula~(\ref{def:invmellin}).

The main result of this section is the following.
\begin{theorem}
\label{thm:existence}
Suppose that $u_0$ satisfies the condition~(\ref{initialdata:int1}), (\ref{initialdata:int2}) and (\ref{initialdata:int3})
Suppose that the kernel  $k$ is of the form given by~(\ref{def:probak}). Then, there exists a unique function $u$ in $C([0, \infty); L^1((1+x)dx))\cap C^1((0, \infty); L^1((1+x)dx))$ that satisfies Equation~(\ref{eq:frag}) and the condition~(\ref{eq:fragdata})  for all $t>0$ and almost every $x>0$.  This solution is given by~(\ref{def:invmellin}) for all $\nu \in (p_0, q_0)$ and  satisfies the property~(\ref{prop:cons}).
\end{theorem}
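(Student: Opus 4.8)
The plan is to establish the theorem in three stages: first produce a candidate solution via the Mellin inversion formula~(\ref{def:invmellin}), then verify it solves~(\ref{eq:frag})--(\ref{eq:fragdata}) with the stated regularity, and finally prove uniqueness together with mass conservation~(\ref{prop:cons}). For the construction, fix $\nu\in(p_0,q_0)$; by definition of $p_0,q_0$ the integral defining $U_0(\nu+i\tau)=\mathcal M_{u_0}(\nu+i\tau)$ converges, and one must show $U_0$ has enough decay along vertical lines for the inverse transform to make sense. This is precisely where hypotheses~(\ref{initialdata:int2}) and~(\ref{initialdata:int3}) enter: integrating by parts once using $\int_0^\infty|u_0'(x)|x^\nu dx<\infty$ and the vanishing boundary terms gives $\mathcal M_{u_0}(s)=\frac{1}{s-1}\mathcal M_{u_0'}(s-1)\cdot(\text{sign/const})$-type identities, so that $|U_0(\nu+i\tau)|=\mathcal O(|\tau|^{-1})$; a second integration by parts using~(\ref{initialdata:int3}) yields $|U_0(\nu+i\tau)|=\mathcal O(|\tau|^{-2})$, which is integrable in $\tau$. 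Since $K(s)=\mathcal M_{k_0}(s)$ with $k_0$ a bounded measure supported in $[0,1]$, one has $|e^{(K(\nu+i\tau)-1)t}|\le e^{(K(\nu)-1)t}\cdot e^{t(\Rea K(\nu+i\tau)-K(\nu))}\le e^{(K(\nu)-1)t}$ because $\Rea K(\nu+i\tau)\le K(\nu)$, so the integrand in~(\ref{def:invmellin}) is dominated by the $\tau$-integrable function $|U_0(\nu+i\tau)|e^{(K(\nu)-1)t}$ uniformly for $t$ in compact sets; hence $u(t,x)$ is well defined, and differentiating under the integral sign (justified by the extra decay, since $K$ is bounded on the line) shows $\partial_t U(t,s)=(K(s)-1)U(t,s)$ pointwise, i.e. $u$ satisfies~(\ref{eq:frag:mellin}). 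One then checks that the function so defined is independent of the chosen $\nu\in(p_0,q_0)$ by a contour-shift argument (Cauchy's theorem, using that $U_0$ and $K$ are analytic on the strip and the horizontal pieces vanish by the vertical decay), and that $x\mapsto u(t,x)$ lies in $L^1((1+x)dx)$ by evaluating $\int_0^\infty x^{\alpha-1}|u(t,x)|dx$ through~(\ref{def:mellin}) at $\alpha=1$ and $\alpha=2$ (both in $(p_0,q_0)$), with the $t$-continuity and $C^1$-in-$t$ regularity following from the dominated-convergence estimates above. To see that this candidate actually solves the integro-differential equation~(\ref{eq:frag}) and not merely its Mellin transform, invert~(\ref{eq:frag:mellin}): both sides of~(\ref{eq:frag}) are, for each $t$, functions in $L^1((1+x)dx)$ whose Mellin transforms on the line $\Rea s=\nu$ agree (the right-hand side transforms to $K(s)U(t,s)$ by Fubini, valid since $k_0$ is supported in $[0,1]$), and the Mellin transform is injective on such strips, so the equation holds for a.e.\ $x$; the initial condition is recovered by letting $t\to0$ in~(\ref{def:invmellin}), again by dominated convergence, which returns the Mellin inversion of $U_0$, namely $u_0$.

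For uniqueness I would argue within the class $C([0,\infty);L^1((1+x)dx))\cap C^1((0,\infty);L^1((1+x)dx))$: if $w$ is a solution with $w(0,\cdot)=0$, then $W(t,s):=\mathcal M_{w(t,\cdot)}(s)$ is well defined and continuous for $\Rea s\in[1,2]$ (and $C^1$ in $t$ by the assumed regularity and Fubini, since the loss term contributes $K(s)W(t,s)$), satisfies $\partial_t W=(K(s)-1)W$ with $W(0,s)=0$, hence $W(t,s)\equiv0$ on the strip; by injectivity of the Mellin transform on $L^1((1+x)dx)$ restricted to $\Rea s=\nu\in(1,2)$ this forces $w(t,\cdot)=0$. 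Finally, mass conservation~(\ref{prop:cons}) is the identity $\frac{d}{dt}U(t,2)=0$, which is immediate from~(\ref{eq:frag:mellin}) and $K(2)=1$; the differentiation under the integral/Fubini manipulations needed to identify $\frac{d}{dt}\int_0^\infty xu(t,x)dx$ with $\partial_t U(t,2)$ are legitimate because $xu(t,x)$ and $x\cdot(\text{RHS of }(\ref{eq:frag}))$ are in $L^1(dx)$ uniformly on compact $t$-intervals, as established above.

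The main obstacle, and the technically delicate point, is the passage from~(\ref{eq:frag:mellin}) back to~(\ref{eq:frag}) in the strong $L^1((1+x)dx)$ sense --- i.e.\ establishing that the contour integral~(\ref{def:invmellin}) genuinely defines an $L^1((1+x)dx)$-valued $C^1$ function of $t$ whose pointwise-a.e.\ spatial behaviour satisfies the fragmentation equation, rather than just a formal solution. Everything hinges on the quantitative vertical decay $|U_0(\nu+i\tau)|=\mathcal O(|\tau|^{-2})$ obtained from the two boundary-term-free integrations by parts under hypotheses~(\ref{initialdata:int2})--(\ref{initialdata:int3}), combined with the a priori bound $\Rea K(\nu+i\tau)\le K(\nu)$ that prevents any growth of $e^{(K(s)-1)t}$ along the line; care is also needed to justify that the chosen $\nu$ does not matter, which requires controlling $U_0$ and $e^{(K(s)-1)t}$ on horizontal segments $[\nu_1,\nu_2]+iT$ as $T\to\infty$. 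The remaining ingredients --- injectivity of the Mellin transform on the relevant strip, Fubini for the loss term (trivial since $\mathrm{supp}\,k_0\subset[0,1]$), and the uniqueness argument --- are routine once these estimates are in place.
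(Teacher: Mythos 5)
Your approach matches the paper's: both prove the necessary decay $|U_0(\nu+iv)|=\mathcal O(|v|^{-2})$ via two boundary-term-free integrations by parts under hypotheses~(\ref{initialdata:int2})--(\ref{initialdata:int3}), both use $\Rea\,K(\nu+iv)\le K(\nu)$ to dominate the exponential factor, and both reduce the equation to the Mellin ODE $\partial_t U=(K-1)U$ and invoke Mellin inversion. The paper organizes the argument as ``any solution must satisfy~(\ref{def:invmellin})'' (which yields uniqueness) followed by the terse assertion that the resulting formula ``is easy to check'' to give a genuine $C^1$-in-$t$, $L^1((1+x)dx)$-valued solution; you go the other direction (construct, verify, then uniqueness by vanishing Mellin transform), but the technical content is the same.

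One small caution on a step you include that the paper leaves unwritten: to conclude $u(t,\cdot)\in L^1((1+x)dx)$ you cannot simply ``evaluate $\int x^{\alpha-1}|u(t,x)|\,dx$ through~(\ref{def:mellin}) at $\alpha=1,2$'', since the Mellin transform at a point gives the unsigned integral, not the absolute one. The clean route from the integrability of $U_0(\nu+iv)$ is the pointwise bound $|u(t,x)|\le C_\nu\,x^{-\nu}$ for each $\nu\in(p_0,q_0)$, then choose $\nu_1<1<2<\nu_2$ (possible when $p_0<1$ and $q_0>2$; the boundary cases $p_0=1$ or $q_0=2$ need a separate, e.g.\ Picard-iteration, argument such as the one in the paper's appendix). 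This is a minor slip in a step the paper also glosses over, not a flaw in the overall strategy.
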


\begin{proof}
Suppose that a function $u(t, x)\in C([0, \infty); L^1((1+x)dx))$ satisfies Equation~(\ref{eq:frag}). Then,  for all $t>0$ the Mellin transform of $u$ is well defined for $\Rea (s )\in [1, 2]$ and is analytic in $\Rea (s )\in (1, 2)$. If we multiply both sides of~(\ref{eq:frag}) by $x^{s-1}$ and integrate over $(0, \infty)$ we obtain:
\begin{eqnarray*}
 \f{d}{dt} \int\limits_0^\infty u(t,x)x^{s-1} dx+ \int\limits_0^\infty u(t,x)x^{s-1} dx = \int\limits_0^\infty x^{s-1}\int\limits_x^\infty \f{1}{y}k_0\left(\f{x}{y}\right)u(t,y)dy\\
 =\int\limits_0^\infty u(t,y) \int\limits_0^y x^{s-1}k_0\left(\f{x}{y}\right)dx\frac {dy} {y}= \int\limits_0^\infty u(t,y) y^{s-1} dy \int\limits_0^1 k_0(z) z^{s-1}{dz}.
\end{eqnarray*}
In the second step we have applied Fubini's Theorem, since by hypothesis \hfill \break $ k_0(z) z^{s-1}u(t,y) y^{s-1}\in L^1((0,1)\times (0, \infty))$. This is nothing but Equation~(\ref{eq:frag:mellin}) from which we deduce~(\ref{def:expl}). Since for all $t>0$, $u(t)$ is analytic on the strip $\Rea (s )\in (1, 2)$ we will immediately deduce~(\ref{def:invmellin}), for all $\nu \in (1, 2)$ as soon as we show that
\begin{equation}
\label{cond:inv}
\int \limits _{ \nu-i\infty }^{ \nu+i\infty }\left|U_0(s)\,e^{(K(s)-1)t}x^{-s}\right|ds<\infty,
\end{equation}
To this end, we notice that, by definition of $K (s)$, $K(s)\le K(1),\,\,\,\forall s\in [1, 2].$ Therefore:
\begin{eqnarray*}
\left|U_0(s)\,e^{(K(s)-1)t}x^{-s}\right|\le \left|U_0(s)x^{-\nu}\right|e^{(K(1)-1)t}
\end{eqnarray*}
Let us check that, under the assumptions made on $u_0,$ we actually have
\begin{equation}
\label{cond:inv2}
\int \limits _{ -\infty }^{\infty}|U_0(\nu+iv)|dv<\infty
\end{equation}
for all $\nu \in (p_0, q_0)$. To this end we write:
\begin{eqnarray*}
U_0(\nu+iv)=\int _0^\infty u_0(x)x^{\nu-1}x^{iv}dx=-\frac {1} {1+iv}\int _0^\infty (u_0(x)x^{\nu-1})'x^{iv+1}dx\\
=-\frac {\nu-1} {1+iv}\int _0^\infty u_0(x)x^{\nu-1}x^{iv}dx-\frac {1} {1+iv}\int _0^\infty (u_0)'(x)x^{\nu}x^{iv}dx\\
=-\frac {\nu-1} {1+iv}U_0(\nu+iv)-\frac {1} {1+iv}\int _0^\infty (u_0)'(x)x^{\nu}x^{iv}dx,
\end{eqnarray*}
where we have used~(\ref{initialdata:int2}). 
Therefore,
$$
U_0(\nu+iv)=-\frac {1} {\nu+iv}\int _0^\infty (u_0)'(x)x^{\nu+iv}dx.
$$
By the same argument, using now~(\ref{initialdata:int3}):
$$
\lim _{ x\to 0 }(u_0)'(x) x^{\nu+1}=\lim _{ x\to \infty }(u_0)'(x) x^{\nu+1}=0,
$$
we obtain:
$$
U_0(\nu+iv)=\frac {1} {(\nu+iv)(1+\nu+iv)}\int _0^\infty (u_0)''(x)x^{1+\nu+iv}dx.
$$
We deduce
\begin{equation}
\label{intMellinu}
|U_0(\nu+iv)|\le\frac {1} {\sqrt{(\nu^2+v^2)((1+\nu)^2+v^2)}}\int _0^\infty |(u_0)''(x)|x^{1+\nu}dx
\end{equation}
and by~(\ref{initialdata:int3}),  (\ref{cond:inv2}) follows.

On the other hand, it is easy to check that if $u_0$ satisfies~(\ref{initialdata:int1})-(\ref{initialdata:int3}), then the function $u$ defined by~(\ref{def:invmellin}) satisfies
$$u\in C([0, \infty); L^1((1+x)dx))\cap C^1((0, \infty); L^1((1+x)dx))$$ and solves System~(\ref{eq:frag})(\ref{eq:fragdata}) for all $t>0$ and almost every $x>0$.
\end{proof}

\section{General convergence results}
\label{sec:genconv}
\subsection{Convergence to a Dirac Mass.}
\label{sec:genconvDirac}

\begin{theorem}
\label{thm: toDirac}
Suppose that, for some $\varepsilon >0,$
$$u\in C([0, \infty); L^1((1+x^{1+\varepsilon })dx))\cap C^1((0, \infty); L^1((1+x^{1+\varepsilon })dx))$$  is a solution of Equation~(\ref{eq:frag}) with $k_0$ satisfying~\eqref{def:probak}. 
Then:
$$
\forall \varphi \in C_0(\R^+),\,\,\,\lim _{ t\to \infty }\int \limits_0^\infty x u(t, x)\varphi (x)dx=M\varphi (0),
$$
where
$$
M=\int _0^\infty x u_0(x)dx.
$$
\end{theorem}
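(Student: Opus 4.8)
The plan is to prove convergence of $xu(t,x)$ to $M\delta_0$ by testing against $\varphi \in C_0(\R^+)$ and exploiting the scaling structure of the fragmentation kernel~\eqref{def:probak}. Write $m(t) = \int_0^\infty xu(t,x)\varphi(x)\,dx$. Since $u \in C^1((0,\infty); L^1((1+x^{1+\varepsilon})dx))$, we may differentiate under the integral, multiply Equation~(\ref{eq:frag}) by $x\varphi(x)$, integrate over $(0,\infty)$, and apply Fubini (justified by the assumed integrability, exactly as in the proof of Theorem~\ref{thm:existence}). The fragmentation term becomes $\int_0^\infty u(t,y)\int_0^y x\varphi(x)\frac{1}{y}k_0(x/y)\,dx\,dy = \int_0^\infty y\,u(t,y)\left(\int_0^1 z\varphi(zy)k_0(z)\,dz\right)dy$. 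Hence
$$
\frac{d}{dt}m(t) = \int_0^\infty y\,u(t,y)\left[\int_0^1 z\,k_0(z)\big(\varphi(zy)-\varphi(y)\big)\,dz\right]dy,
$$
where we used $\int_0^1 z\,k_0(z)\,dz = 1$ to combine the $-u$ term with the gain term. Note that the choice $\varphi \equiv 1$ (approximated suitably, or directly since the bracket vanishes identically) recovers $M(t) \equiv M$, consistent with~(\ref{prop:cons}).

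Next I would show $m(t)$ converges. The idea is that the bracketed quantity $F(y) := \int_0^1 z\,k_0(z)(\varphi(zy)-\varphi(y))\,dz$ is bounded (by $2\|\varphi\|_\infty$ times $\int_0^1 z\,k_0(z)\,dz = 2\|\varphi\|_\infty$) and, crucially, satisfies $F(y) \to 0$ as $y \to 0$: since $\varphi$ is continuous at $0$ and $zy \le y \to 0$, uniform continuity of $\varphi$ near $0$ gives $|\varphi(zy)-\varphi(y)| \to 0$ uniformly in $z \in [0,1]$, and dominated convergence (against the finite measure $z\,k_0(z)\,dz$) finishes it. So the convergence hinges on showing that the mass $y\,u(t,y)\,dy$ concentrates near $y=0$ as $t\to\infty$.

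To establish this concentration, I would run the same computation with $\varphi$ replaced by a test function $\psi_\eta$ that equals $0$ on $[0,\eta]$, equals $1$ on $[2\eta,\infty)$, and is smooth in between: then $G_\eta(t) := \int_0^\infty y\,u(t,y)\psi_\eta(y)\,dy$ is nonincreasing, because for such $\psi_\eta$ one has $\psi_\eta(zy) \le \psi_\eta(y)$ (as $zy \le y$ and $\psi_\eta$ is nondecreasing), so the bracket $\int_0^1 z\,k_0(z)(\psi_\eta(zy)-\psi_\eta(y))\,dz \le 0$ pointwise, giving $\frac{d}{dt}G_\eta(t) \le 0$; moreover $G_\eta$ is bounded below by $0$, so it has a limit $G_\eta(\infty) \ge 0$. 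The main obstacle is to prove $G_\eta(\infty) = 0$, i.e. genuine decay rather than mere convergence. For this I would use the strict inequality $\int_0^1 k_0(z)\,dz > 1$ from~\eqref{def:probak}: on any annulus $\eta \le y \le R$, a definite fraction of mass leaves for smaller sizes per unit time — quantitatively, integrating $\frac{d}{dt}G_\eta$ shows $\int_0^\infty \left(-\frac{d}{dt}G_\eta(t)\right)dt < \infty$, so $\frac{d}{dt}G_\eta(t) \to 0$ along a sequence, which combined with a lower bound on the rate of mass loss from $[\eta, R]$ (using that $\int_R^\infty y u\,dy \to 0$ by the uniform-in-time $L^1(x^{1+\varepsilon}dx)$ bound and interpolation, controlling the tail) forces the mass on $[\eta,\infty)$ to vanish.

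Finally, combining: given $\delta > 0$, pick $\eta$ so small that $|F(y)| \le \delta$ for $y \le 2\eta$ and so that $\int_0^\infty y u_0 \psi_{2\eta}\,dy$ plus the monotone-limit argument gives $G_{2\eta}(t) \le \delta$ for $t$ large; then $|m(t) - M\varphi(0)| \le |m(t) - \varphi(0)M(t)| = \left|\int_0^\infty y u(t,y)(\varphi(y)-\varphi(0))\,dy\right|$, which splits into the part on $[0,2\eta]$ (bounded by $\delta \cdot \sup_{[0,2\eta]}|\varphi-\varphi(0)| \cdot$ total mass, small) and the part on $[2\eta,\infty)$ (bounded by $2\|\varphi\|_\infty G_{2\eta}(t) \le 2\|\varphi\|_\infty\delta$). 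Letting $t\to\infty$ then $\delta \to 0$ yields the claim. I expect the delicate point to be the quantitative mass-loss estimate making $G_\eta(\infty)=0$; an alternative cleaner route, if the explicit representation is available, is to read off $U(t,2) = U_0(2)$ (so $M$ is conserved) and $U(t,s) = U_0(s)e^{(K(s)-1)t} \to 0$ for $s > 2$ since $K(s) < 1 = K(2)$ there, which directly says all moments $\int y^{s-1}\cdot y u(t,y)\,dy$ of order $> 1$ decay — and that is precisely concentration at $0$; but the monotonicity argument above has the advantage of requiring only the hypotheses of the stated theorem rather than the stronger regularity of Theorem~\ref{thm:existence}.
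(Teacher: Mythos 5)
Your overall strategy (entropy/dissipation for truncated first moments, then conclude concentration at $0$) is the same in spirit as the paper's, but the paper does it with a compactness argument you do not use, and the place where you write ``combined with a lower bound on the rate of mass loss from $[\eta,R]$\dots forces the mass on $[\eta,\infty)$ to vanish'' is a genuine gap. The difficulty is that $-\tfrac{d}{dt}G_\eta(t)=\int_0^\infty yu(t,y)\,\bigl|\int_0^1 zk_0(z)(\psi_\eta(zy)-\psi_\eta(y))dz\bigr|\,dy$ only \emph{sees} mass in a bounded ``active zone'' near $\eta$: for $y\ge 2\eta$ one has $\psi_\eta(y)=1$ and $\psi_\eta(zy)=1$ unless $z<2\eta/y$, so the bracket vanishes identically whenever $k_0$ has no mass on $(0,2\eta/y)$. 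For the admissible kernel $k_0=2\delta_{1/2}$ the bracket is zero for all $y>4\eta$, so one can deduce only that the mass in $[\eta,4\eta]$ vanishes along a sequence $t_n$ depending on $\eta$ --- this does not control $G_\eta(\infty)$, because the mass could sit beyond $4\eta$. There is no lower bound of the kind you invoke.

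The paper sidesteps exactly this by an entropy-dissipation \emph{plus compactness} argument: it integrates the equation against $\mathbf 1_{[z,\infty)}\cdot x$ to get $\tfrac{d}{dt}H(z;t)=-\int_z^\infty F(z/x)\,xu(t,x)\,dx\le 0$ with $F(y)=\int_0^y\!zk_0(z)dz$; then it passes to a weak limit of the time-shifted solutions $u(\cdot+t_n,\cdot)$ in $C([0,\infty);\mathcal M_{1+\varepsilon})$. Because $H(z;\cdot)$ is monotone and converges, the limit measure $U(t,dx)$ has $\int_z^\infty U(t,dx)$ constant in $t$, hence zero dissipation $\int_z^\infty F(z/x)\,U(t,dx)=0$ for \emph{every} $z>0$ simultaneously. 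Since $k_0\neq\delta_1$ ensures $F\ge c>0$ on some $(1-\epsilon,1)$, this kills all mass of $U$ in $[z,z/(1-\epsilon)]$ for every $z>0$, so $U=M\delta_0$. The shift-and-compactness step is what packages ``all truncation levels at once'' into a single limit object, and it is precisely the ingredient your direct argument is missing.

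Your \emph{alternative} route via Mellin moments, which you set aside as needing the stronger hypotheses of Theorem~\ref{thm:existence}, in fact goes through under the hypotheses of Theorem~\ref{thm: toDirac} alone and is arguably the cleanest proof. For any $\delta\in(0,\varepsilon)$ the moment $\int_0^\infty x^{1+\delta}u(t,x)\,dx$ is finite and $C^1$ in $t$ by the assumed regularity in $L^1((1+x^{1+\varepsilon})dx)$; Fubini (as in the derivation of \eqref{eq:frag:mellin}) gives the ODE $\tfrac{d}{dt}\int x^{1+\delta}u = (K(2+\delta)-1)\int x^{1+\delta}u$, and since $K$ is strictly decreasing with $K(2)=1$ one has $K(2+\delta)<1$, so $\int x^{1+\delta}u(t,x)\,dx\to 0$ exponentially. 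Combined with $\int x\,u(t,x)\,dx\equiv M$ and Markov's inequality $\int_\eta^\infty xu(t,x)\,dx\le\eta^{-\delta}\int x^{1+\delta}u(t,x)\,dx\to 0$, the claim follows exactly as in your final paragraph. So either adopt the paper's compactness argument to repair the $G_\eta(\infty)=0$ step, or promote your ``alternative'' to the main argument.
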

\begin{proof}
 Multiply Equation~(\ref{eq:frag}) by $x$ and integrate between $z$ and $\infty$ where $z\ge 0$:
\begin{eqnarray}
\frac {d} {dt}\int\limits_z^\infty x u(t, x)dx+ \int\limits_z^\infty x u(t, x)dx =\int\limits_z^\infty x\int\limits_x^\infty \f{1}{y}k_0\left(\f{x}{y}\right)u(t,y)dydx \nonumber \\
=\int\limits_z^\infty  y u(t,y)  \int\limits_{z/y}^1  r k_0\left(r\right)dr dy. \label{S3E1thm: toDirac}
\end{eqnarray}
We notice that, if $z=0$,
$$
\frac {d} {dt}\int\limits_0^\infty x u(t, x)dx=0
$$
and the initial value $M$ of $\int\limits_0^\infty x u(t, x)dx$ is preserved for all time $t>0$.

Denoting now $F(x)=\int\limits_0^x y k_0 (y) dy,$ we may see $ y k_0(y)$ as a probability distribution on $(0,1)$, and $F$ as its cumulative distribution function. The inequality~(\ref{S3E1thm: toDirac}) may be written as
\begin{eqnarray*}
\frac {d} {dt}\int\limits_z^\infty x u(t, x)dx :=\f{d}{dt} H(z;t)=-\int\limits_z^\infty  F\left(\f{z}{x}\right) x u(t,x)dx:=D(z;t)\leq 0.
\end{eqnarray*}
The functions $H(z; \cdot)$ and $D(z; \cdot)$ may then play the role of the entropy and entropy dissipation functions for Equation~(\ref{eq:frag}).

If we consider now any sequence $(t_n) _{ n\in \N }$ such that $t_n\to \infty$ and define $u_n(t, x)=u(t+t_n, x)$, the sequence $(u_n)_{ n\in \N }$ is bounded in $C^1([0, \infty);\; \mathcal M_{1+\varepsilon })$. A classical argument shows the existence of a subsequence, still denoted as  $(u_n)$, and a non negative measure valued function $U(t, dx)\in C([0, \infty);\; \mathcal M_{1+\varepsilon })$ such that
\begin{eqnarray*}
x\, u_n\rightharpoonup U\,\,\,\hbox{in}\,\,\, (C_c([0, T]\times [0, +\infty )))'\\
x\, u_n(t) \rightharpoonup  U(t)\,\,\,\hbox{in}\,\,\,  (C( [0, +\infty )))'\\
\int _0^\infty U(t, dx)=M,\,\,\forall t>0
\end{eqnarray*}
and
$$
D(U(t))=-\int\limits_z^\infty  F\left(\f{z}{x}\right) U(t, dx)=0,\,\,\,\forall t>0,\,\,\forall z>0.
$$
As soon as $k_0$ is not equal to $\delta_1(x)$,  the Dirac mass at one (excluded by Assumption~\eqref{def:probak}), there is a neighbourhood $(1-\ep,1)$ of $y=1$ around which $F(y)$ is  bounded from below by a strictly positive constant. This implies that for all $z>0,$ the integral 
$\int_{[z, \f{z}{1-\ep}]} U (t, dx)=0,$ which is sufficient to ensure that $U(t, dx)= M\delta$ for all $t>0$.
\end{proof}

Theorem \ref{thm: toDirac} does not give any information about how the convergence takes place.  When the fragmentation rate is $B(x)=x^\gamma $, with $\gamma >0$, it was proved in \cite{EscoMischler3} that  the convergence to a Dirac mass is self-similar. First it was proved  that for any $M>0$ there is a unique  non negative integrable solution $g\in L^1(xdx)$ to:

\begin{equation}
\left\{
\begin{split}
&g(z )+ \f{\p}{\p z } \big(z  g (z)\big) + B(z )g(z)=\gamma \int\limits_z ^\infty k(\rho  , z) B(\rho ) g(\rho ) d\rho\\
&\int _0^\infty xg(x)dx=M.
\end{split}
\right.
\label{eq:selfsimprofile}
\end{equation}
The first equation in~(\ref{eq:selfsimprofile})  is a particular case of Equation~(\ref{eq:eigen}), with $\tau (x)=x$, so $\nu=1$, and $\lambda =1$. 

Moreover, the solution $u$ of the fragmentation equation with initial data satisfying $\int_0^\infty x u_0(x)dx=M$ is such that
$$
\lim _{ t\to \infty }\int \limits _0^\infty\left|u(t, y)-t^{\frac {2} {\gamma }}g\left( t^{\frac {1} {\gamma }} y \right)\right| ydy=0.
$$
The function $t^{\frac {2} {\gamma }}g\left( t^{\frac {1} {\gamma }} y \right)$ is a solution of the fragmentation equation, called self-similar due to its invariance by the natural scaling associated to the equation. We show in the next section that there is no such type of solutions to Equation~(\ref{eq:frag}).

When $\gamma <0$ it is well known that the mass $M(t)=\int x u(t,x) dx$ of  the solutions of the fragmentation equation~(\ref{eq:frag})  is not constant anymore but, on the contrary, is a decreasing function of time. The asymptotic behaviour of the solutions of Equation~(\ref{eq:frag}) for $\gamma  <0$ has been described in  \cite{MR2650037}, and it was proved that it is not self-similar. It was also shown that such asymptotic behaviour strongly depends on the initial data $u_0$. For example, if $u_0$ has compact support and the measure $k_0$ satisfies:
$$
\int _0^{1-z}xk_0(x)dx\sim z^{-\beta },\,\,\,\hbox{as}\,\, z\to 0
$$
for some $\beta \in [0, 1)$, then the measure $\mu (t,x)$, solution of the fragmentation equation~\eqref{eq:frag}  satisfies, for some positive constant $C _{ \alpha , \beta  }$ only depending on $\alpha $ and $\beta $:
$$
\frac {1} {M(t)}\frac {1} {C _{ \alpha , \beta  }t^{\frac {\beta } {(1-\beta )|\alpha |}}}\mu \left(t, \frac {dx} 
{C _{ \alpha , \beta  }t^{\frac {\beta } {(1-\beta )|\alpha |}}}\right)\rightharpoonup
\mu  _{ \infty }\left(dx \right), \,\,\,\hbox{as}\,\,t\to \infty
$$
in the weak sense of measures, where $x \mu_\infty (dx)$ is a probability distribution on $(0,\infty)$ characterized   by its moments,  see Theorem~1.3. in~\cite{MR2650037}, and where
$$
M(t)\sim \exp\left(-\frac {1-\beta } {|\alpha |}C _{ \alpha , \beta  }^{|\alpha |}|\alpha | t^{\frac {\beta } {(1-\beta )|\alpha |}} \right),\,\,\,\hbox{as}\,\,\,t\to \infty.
$$

\subsection{No Self similar solutions.}
The aim of this section is to prove the following result.
\begin{theorem}
\label{thm: noselfsim} Suppose that the kernel $k$ is of the form given by~(\ref{def:probak}). Then, Equation~(\ref{eq:frag}) has no solution of the form
\begin{equation}
\label{def:selfsim}
\omega (t,x)=f(t)\Phi(xg(t))
\end{equation}
where $f$ and $g$ are continuously derivable  functions from $\R^+$ to itself and $\Phi $ in $L^1((1+x)dx)$.
\end{theorem}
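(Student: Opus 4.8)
The plan is to suppose, for contradiction, that a solution of the form $\omega(t,x)=f(t)\Phi(xg(t))$ exists with $\Phi\in L^1((1+x)dx)$ nontrivial, and to derive incompatible constraints from the Mellin transform of Equation~(\ref{eq:frag}). First I would take the Mellin transform in $x$: since $\mathcal M_{\Phi(g(t)\cdot)}(s)=g(t)^{-s}\mathcal M_\Phi(s)$, we get $U(t,s)=f(t)g(t)^{-s}\mathcal M_\Phi(s)$, at least on the strip $\Rea(s)\in(1,2)$ where everything is well defined by $\Phi\in L^1((1+x)dx)$ (one should first note $\mathcal M_\Phi(s)\not\equiv 0$ there, since otherwise $\Phi\equiv 0$). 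Plugging this into the Mellin form~(\ref{eq:frag:mellin}) of the equation, $\partial_t U + U = K(s)U$, and dividing by $U(t,s)$ (valid wherever $\mathcal M_\Phi(s)\neq 0$, in particular at $s=2$ where $\mathcal M_\Phi(2)=\int x\Phi(x)dx$ — which must be nonzero since the mass is conserved and nontrivial, see~(\ref{prop:cons})), we obtain
\[
\frac{f'(t)}{f(t)} - \frac{g'(t)}{g(t)}\log\!\big(g(t)\big)\,(-s)\;+\;1 \;=\; K(s),\qquad \forall t>0.
\]
Wait — more carefully, $\partial_t\big(f(t)g(t)^{-s}\big) = \big(f'(t)g(t)^{-s} - s f(t) g(t)^{-s-1}g'(t)\big)$, so dividing by $f(t)g(t)^{-s}$ gives $\frac{f'(t)}{f(t)} - s\,\frac{g'(t)}{g(t)} + 1 = K(s)$.

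The key observation is then that this identity must hold for all $t>0$ and all $s$ in the strip $\Rea(s)\in(1,2)$ with $\mathcal M_\Phi(s)\neq 0$ — a set with an accumulation point, so by analyticity the identity extends to the whole strip. Fix two distinct values $s_1,s_2$ in the strip (both with $\mathcal M_\Phi\neq 0$, or use analytic continuation): subtracting the two identities gives $(s_2-s_1)\frac{g'(t)}{g(t)} = K(s_1)-K(s_2)$, a nonzero constant (since $K$ is strictly decreasing, hence injective), so $\frac{g'(t)}{g(t)}=:c$ is a constant and therefore $\frac{f'(t)}{f(t)}=:d$ is also constant; thus $g(t)=g(0)e^{ct}$, $f(t)=f(0)e^{dt}$. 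Substituting back, the identity becomes $d - sc + 1 = K(s)$ for all $s$ in the strip — i.e. $K(s)$ would be an affine function of $s$. The contradiction now comes from the shape of $K$: by~(\ref{def:mellin:noyau}) and the normalization in~(\ref{def:probak}), $K(2)=1$ and $K(1)>1$, and $K$ is strictly convex on $(p_1,+\infty)\supset(1,2)$ (as noted after~(\ref{k0:p1}), or directly: $K''(s)=\int_0^1 k_0(z)z^{s-1}(\log z)^2 dz>0$ since $k_0$ is not supported at $z=1$). A strictly convex function cannot be affine on an interval, contradiction. Hence no such $\omega$ exists.

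The main obstacle — really the only delicate point — is justifying that the Mellin transform machinery applies to $\omega$, i.e. that $\omega$ actually is a solution in the sense for which~(\ref{eq:frag:mellin}) was derived, and that $\mathcal M_\Phi$ is analytic and not identically zero on a strip where we may evaluate. The hypothesis $\Phi\in L^1((1+x)dx)$ gives exactly that $\mathcal M_\Phi$ is well defined and analytic on $\Rea(s)\in(1,2)$, and the passage to~(\ref{eq:frag:mellin}) requires the same Fubini argument used in the proof of Theorem~\ref{thm:existence}, which goes through since $k_0(z)z^{s-1}\Phi(g(t)y)y^{s-1}\in L^1((0,1)\times(0,\infty))$ for $s$ in that strip and each fixed $t$. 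One should also briefly dispatch the degenerate possibility that $f$ or $g$ vanishes at some $t$: if $g(t_0)=0$ then $\omega(t_0,\cdot)$ is constant in $x$, incompatible with integrability unless $\omega(t_0,\cdot)\equiv 0$, and then uniqueness (Theorem~\ref{thm:existence}, or the mild-solution uniqueness) forces $\omega\equiv 0$; similarly if $f(t_0)=0$. So one may assume $f,g>0$ throughout, and all the divisions above are legitimate.
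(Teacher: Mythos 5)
Your proof is correct and follows essentially the same route as the paper: take the Mellin transform, observe that the self-similar ansatz forces $K$ to be affine in $s$, then contradict the structural properties of $K$. The only real difference is the final step — you invoke strict convexity of $K$ (which the paper also notes holds on $(p_1,\infty)$, since $K''(s)=\int_0^1 k_0(z)z^{s-1}(\log z)^2\,dz>0$ as $k_0\neq\delta_1$), whereas the paper uses monotonicity together with $K\geq 0$ on $s\geq 1$ to force the slope $a=0$ and then contradicts with $K(1)>1=K(2)$; both close the argument cleanly, and your version is arguably shorter. You are also somewhat more careful than the paper in flagging the need to justify the Mellin manipulation and in dispatching the degenerate cases where $f$ or $g$ vanishes, which is harmless extra rigor.
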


\begin{proof}
We denote  $\Phi(s) = \int_0^\infty x^{s-1} \Phi(x)dx$, that by hypothesis is well defined for $s\in [1, 2]$. The Mellin transform of $u(t)$ takes now the form:
$${\mathcal  M}_u(t, s)= \big(g(t)\big)^{-s} f(t) \Phi (s),$$
and Equation~(\ref{eq:frag}) gives
$$\f{\partial  {\mathcal  M}_u}{\partial t}(t, s)= \f{f'(t)}{f(t)} {\mathcal  M}_u (t, s) -s \f{g'(t)}{g(t)} {\mathcal  M}_u(t, s)=(-1+{ K}(s)) {\mathcal  M}_u(t, s)$$
Therefore:
$$(-1+{ K}(s))=\f{f'(t)}{f(t)} - s \f{g'(t)}{g(t)}$$
and 
$${K}'(s) = -\f{g'(t)}{g(t)}.$$
We deduce that ${K}'(s)$ and $\f{g'(t)}{g(t)}$ must be constants. Therefore,
$g(t)=Ae^{-at}$, for some constants $A$ and $a$,  and  ${K}(s)=a(s-1) + 2.$ Since ${K}(s) \geq 0$ for every $s \geq 1,$ we must have $a\geq 0$. Since on the other hand,  $K(s)$ is decreasing, we deduce $a\leq 0,$ and therefore $a=0.$
But that is impossible because  ${K}(1)>1$ and ${K}(2)=1$.
\end{proof}

\begin{remark}
\label{selfsim}
All the functions of the form
$$u_s(t,x):=x^{-s} e^{({K}(s)-1) t}=e^{({K}(s)-1) t-s\log x}$$
are pointwise solutions of Equation~(\ref{eq:frag}) for all $t>0$ and $x>0$ since
$$
\frac {\partial u _{ s  }} {\partial t}=({K}(s)-1)x^{-s} e^{({K}(s)-1) t},
$$
\begin{eqnarray*}
u_s(t, x)-\int _x^\infty \frac {dy} {y}k_0\left(\frac {x} {y} \right)u_s(t, y)
=x^{-s} e^{({ K}(s)-1) t}- e^{({K}(s)-1) t}\int _x^\infty \frac {dy} {y}k_0\left(\frac {x} {y} \right) y^{-s }\\
=x^{-s} e^{({K}(s)-1) t}- e^{({K}(s)-1) t}x^{-s }\int _0^1 k_0(z)z^{s }dz=x^{-s} e^{({K}(s)-1) t}(1-K (s)).
\end{eqnarray*}
Such solutions are invariant by the change of variables: 
$$
u_{\lambda }(t, x)=u\left(t+ \frac {s \log (\lambda )} {K(s)}, \lambda x\right)
$$
since:
\begin{eqnarray*}
u_{s, \lambda }(t, x)=u_s\left(t+ \frac {s \log (\lambda )} {K(s)}, \lambda x\right)
=e^{-t}e^{K(s)\left(t+ \frac {s \log (\lambda )} {K(s)}\right)-s(\log x+\log\lambda )}\\
=e^{-t}e^{K(s) t-s\log x}=u_s(t, x).
\end{eqnarray*}
These solutions are then self-similar. Notice although that they do not belong to $L^1(0, \infty))$ nor $L^1(x\, dx)$.
\end{remark}

\section{Proof of the main results}
\label{sec:main}
To study the long-time behaviour of the solutions $u$ of  Equation~(\ref{eq:frag})  we use their explicit representation~(\ref{def:invmellin}), that holds under the assumptions~(\ref{initialdata:int1}), (\ref{initialdata:int2}), (\ref{initialdata:int3}) on the initial data $u_0$ as shown in Theorem~\ref{thm:existence}. In order to simplify as much as possible the presentation we define the new function:
\begin{equation}
\label{link:uw}
w(t, x)=e^t u(t, x)
\end{equation}
that satisfies:
\begin{equation}
\label{eq:adim}
\f{\p}{\p t} w  =\int\limits_x^\infty \f{1}{y}k_0\left(\f{x}{y}\right)w(t,y)dy, \qquad w(0,x)=u_0(x),
\end{equation}
and, since ${ M}_{w}(0,s)=U_0(s),$ it is given by:
\begin{equation}
\label{def:wexplicit}
w(t, x)= \frac {1} {2\pi i}\int  _{ \nu -i\infty }^{ \nu +i\infty }x^{-s } U_0(s)e^{{K}(s) t}ds 
\end{equation}
for any  $\nu\in (1, 2)$.

\subsection{Behaviour for $x>1$.}
\label{xsmall}
The long-time behaviour of $u$ for $x>1$ is given in Theorem \ref{theorem1}. Its proof is based on the following lemma.

\begin{lemma}
\label{extq}
Suppose that $u_0$ satisfies~(\ref{initialdata:int1}), (\ref{initialdata:int2})-(\ref{initialdata:reg2}). Then the function $ U_0(s)$ defined by 
\begin{equation}
\label{def:explMellinw}
U_0(s)=\int \limits_0^\infty x^{s-1}u_0(x)dx,
\end{equation}
that is by hypothesis analytic on the strip $\Rea (s)\in (p_0, q_0)$ may be extended to a meromorphic function, still denoted $U_0(s)$, on $\Rea (s)\in (\rho , r )$ for $r >0$ given in~(\ref{initialdata:reg1}) and $\rho >0$ given in~(\ref{initialdata:reg1bis}). Moreover, 
\begin{equation}
\label{IntMellinw}
\int  _{ \nu -i\infty }^{ \nu +i\infty }|U_0(s)|ds<+\infty
\end{equation}
for all  $\nu \in (\rho , p_0)\cup(p_0, q_0)\cup(q_0, r)$.
\end{lemma}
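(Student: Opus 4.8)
The plan is to obtain the meromorphic extension of $U_0(s)$ by splitting the defining integral at $x=1$ and subtracting off the known leading asymptotics of $u_0$, whose Mellin transforms are explicit rational functions with simple poles. Write $U_0(s)=U_0^{(0,1)}(s)+U_0^{(1,\infty)}(s)$ where $U_0^{(1,\infty)}(s)=\int_1^\infty x^{s-1}u_0(x)\,dx$ and $U_0^{(0,1)}(s)=\int_0^1 x^{s-1}u_0(x)\,dx$. For the part over $(1,\infty)$, use Hypothesis~\eqref{initialdata:reg1}: write $u_0(x)=a_0 x^{-q_0}+\big(u_0(x)-a_0x^{-q_0}\big)$, so that
\begin{equation*}
U_0^{(1,\infty)}(s)=\frac{a_0}{q_0-s}+\int_1^\infty x^{s-1}\big(u_0(x)-a_0x^{-q_0}\big)\,dx,
\end{equation*}
where the first term is meromorphic on all of $\C$ with a simple pole at $s=q_0$, and the remaining integral converges absolutely and defines an analytic function for $\Rea(s)<r$ thanks to the bound $|u_0(x)-a_0x^{-q_0}|\le C_1 x^{-r}$. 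Symmetrically, over $(0,1)$ I would use Hypothesis~\eqref{initialdata:reg1bis}, writing $u_0(x)=b_0 x^{-p_0}+\big(u_0(x)-b_0x^{-p_0}\big)$, to get
\begin{equation*}
U_0^{(0,1)}(s)=\frac{b_0}{s-p_0}+\int_0^1 x^{s-1}\big(u_0(x)-b_0x^{-p_0}\big)\,dx,
\end{equation*}
the integral now converging for $\Rea(s)>\rho$ because $|u_0(x)-b_0x^{-p_0}|\le C_1' x^{-\rho}$. Adding these gives a meromorphic extension of $U_0$ to the strip $\Rea(s)\in(\rho,r)$ whose only singularities are the simple poles at $s=p_0$ (with residue $b_0$) and $s=q_0$ (with residue $-a_0$); in particular $U_0$ is analytic on each of the three open substrips $(\rho,p_0)$, $(p_0,q_0)$, $(q_0,r)$.

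For the decay estimate \eqref{IntMellinw}, I would repeat, on each of the three substrips, the two integrations by parts already carried out in the proof of Theorem~\ref{thm:existence}, but applied to the \emph{corrected} function rather than to $u_0$ itself. Concretely, on a vertical line $\Rea(s)=\nu$ with $\nu\in(q_0,r)$, set $\tilde u_0(x)=u_0(x)-a_0 x^{-q_0}$ for $x>1$ and $\tilde u_0(x)=u_0(x)$ for $x<1$ (and symmetrically using $b_0 x^{-p_0}$ when $\nu\in(\rho,p_0)$, while on $(p_0,q_0)$ no correction is needed and the estimate is exactly \eqref{intMellinu}). Two integrations by parts, using the boundary-vanishing and integrability conditions~\eqref{initialdata:int2}, \eqref{initialdata:int3} together with the second-order bounds~\eqref{initialdata:reg2}, \eqref{initialdata:reg3} (resp. \eqref{initialdata:reg2bis}, \eqref{initialdata:reg3bis}) for the corrected function near $x=\infty$ (resp. $x=0$), yield a bound of the form
\begin{equation*}
|U_0(\nu+iv)|\le \frac{C(\nu)}{\sqrt{(\nu^2+v^2)\big((1+\nu)^2+v^2\big)}},
\end{equation*}
with $C(\nu)$ controlled by $\int_0^\infty x^{1+\nu}|\tilde u_0''(x)|\,dx<\infty$; this is integrable in $v$ over $\R$, giving \eqref{IntMellinw}. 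The one point needing care is that the explicit rational corrections $a_0/(q_0-s)$ and $b_0/(s-p_0)$ are themselves $\mathcal O(1/|v|)$ on the relevant lines, hence integrable too, so they do not spoil \eqref{IntMellinw}; away from the poles this is immediate.

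The main obstacle is bookkeeping rather than conceptual: one must check that the boundary terms produced by the integrations by parts genuinely vanish for the \emph{corrected} integrand on each substrip — i.e. that $x^{\nu}\tilde u_0(x)\to 0$ and $x^{1+\nu}\tilde u_0'(x)\to 0$ at both endpoints for the relevant range of $\nu$. Near $x=\infty$ with $\nu<r$ this follows by combining $\lim_{x\to\infty}x^{\nu_0}u_0(x)=0$ for $\nu_0\in(p_0,q_0)$ from \eqref{initialdata:int2} (which handles the $a_0 x^{-q_0}$ piece since $q_0>0$, so $x^{\nu-q_0}\to 0$ when $\nu<q_0$, and for $\nu\in(q_0,r)$ one uses instead the bound $x^\nu|\tilde u_0(x)|\le C_1 x^{\nu-r}\to 0$) with the derivative bound \eqref{initialdata:reg2}; near $x=0$ one argues symmetrically with \eqref{initialdata:reg1bis}–\eqref{initialdata:reg2bis}. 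Once these vanishings are in hand, the estimate propagates verbatim from the computation in Theorem~\ref{thm:existence}, and the lemma follows.
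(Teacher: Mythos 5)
Your meromorphic-extension argument is correct and is essentially the paper's: subtract the power-law leading terms so that the remainders converge on wider strips, and read off the simple poles at $p_0$ and $q_0$. The paper writes this slightly differently — on a line $\Rea(s)=\nu$ with $\nu\in(q_0,r)$ it represents $U_0(s)$ as the single integral $G(s)=\int_0^\infty x^{s-1}\big(u_0(x)-a_0x^{-q_0}\big)dx$ over the \emph{whole} half-line, absorbing the pole term $a_0/(q_0-s)=-\int_0^1 x^{s-1}a_0x^{-q_0}dx$ into the piece near the origin — but this is algebraically the same decomposition as yours.

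The gap is in the decay estimate \eqref{IntMellinw}. First, your parenthetical claim that the rational corrections $a_0/(q_0-s)$ and $b_0/(s-p_0)$ are ``$\mathcal O(1/|v|)$ on the relevant lines, hence integrable too'' is false: $1/|v|$ is not in $L^1(\R)$, so $\mathcal O(1/|v|)$ decay does \emph{not} give \eqref{IntMellinw}. Second, and relatedly, your corrected function $\tilde u_0$ is defined piecewise — $u_0-a_0x^{-q_0}$ on $(1,\infty)$ and $u_0$ on $(0,1)$ — so in general $\tilde u_0$ has a jump of size $-a_0$ at $x=1$. Consequently $\tilde u_0''$ is not an integrable function, and the two integrations by parts you invoke produce nonvanishing interior boundary contributions at $x=1$ of size $\mathcal O(1/|v|)$ that you do not account for (you only check the endpoints $x\to0,\infty$). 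To make your route work one would have to show that these $x=1$ boundary terms exactly cancel, at leading order in $1/v$, against the $\mathcal O(1/|v|)$ tails of the rational pole terms — a cancellation that does occur but is never observed or exploited in your write-up. The paper sidesteps this entirely: on each line $\Rea(s)=\nu$ with $\nu\in(q_0,r)$ (resp. $\nu\in(\rho,p_0)$) it subtracts the single power $a_0x^{-q_0}$ (resp. $b_0x^{-p_0}$) over the \emph{entire} half-line $(0,\infty)$, so the corrected integrand is smooth at $x=1$, the IBP boundary terms at $0$ and $\infty$ vanish by \eqref{initialdata:int2}--\eqref{initialdata:reg3}, and two integrations by parts directly give $U_0(\nu+iv)=\mathcal O(1/|v|^2)$, which is integrable. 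You should adopt that full-line correction (or else carry out the $x=1$ boundary bookkeeping and exhibit the cancellation explicitly); as written, the decay estimate is not established.
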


\begin{proof}
By~(\ref{initialdata:int1}) we already have that $ U_0(s)$ is analytic on $\Rea (s) \in (p_0, q_0)$. For such an $s$ we may write:
\begin{eqnarray*}
U_0(s)=\int _0^1x^{s-1 }u_0(x)dx+\int _1^\infty x^{s-1} u_0(x)dx=I_1(s)+I_2(s).
\end{eqnarray*}

The integral $I_1(s)$ is analytic on the half plane $\Rea (s)> p_0$, and then, for the part $(p_0,r),$ it only remains to consider $I_2$. This term  may be written as follows
$$
I_2(s)=\int _1^\infty x^{s-1} a_0x^{-q_0}dx+
\int_1^\infty x^{s-1}(u_0(x)-a_0x^{-q_0})dx.$$
If $s\in \R^+$ and $s <q_0-1$, we have:
$$\int _1^\infty x^{s-1} (a_0x^{-q_0})dx=\frac {a_0} {q_0-s},
$$
and that function has a meromorphic extension to all the complex plane, with a single pole at $s =q_0$.  

On the other hand, by the hypothesis~(\ref{initialdata:reg1}),
$$
\int_1^\infty|x^{s-1} (u_0(x)-a_0x^{-q_0})|dx<C\int_1^\infty x^{-r+\Rea (s)-1}dx.
$$
The integral in the right-hand side converges for   $q_0\le \Rea (s ) < r$. We deduce that
$$
\int_1^\infty x^{s-1 }(u_0(x)-a_0x^{-q_0})dx
$$
is  analytic in the strip $\Rea (s) \in [q_0, r)$.

If we define now :
$$
H(s)=\int _0^1x^{s -1}u_0(x)-\frac {a_0} {s -q_0}+\int _1^\infty x^{s -1}(u_0(x)-a_0x^{-q_0}) dx.
$$
$H $ is meromorphic in the strip $\Rea (s )\in (p_0, q_0)$.  By definition of  $U_0(s)$ we have, for all  $\Rea (s )\in (p_0, q_0)$:
\begin{eqnarray*}
 U_0(s)=\int _0^1x^{s -1}u_0(x)-\frac {a_0} {s -q_0}+\int _1^\infty x^{s -1}(u_0(x)-a_0x^{-q_0}) dx.
\end{eqnarray*}
And in the strip $\Rea (s )\in (q_0, r)$ we have:
\begin{eqnarray*}
\frac {a_0} {s -q_0}=\int _0^1x^{s -1}a_0x^{-q_0}dx
\end{eqnarray*}
then, if $\Rea(s )\in (q_0, r)$:
$$
H _{  }
(s)=\int _0^\infty x^{s -1}(u_0(x)-a_0x^{-q_0}) dx:=G (s).
$$
We have then:
\begin{eqnarray*}
&&U_0(s)= H(s),\,\,\forall s ; \Rea(s )\in (a, q_0)\\
&&H(s)=G (s),\,\,\forall s ; \Rea(s )\in (q_0, r)
\end{eqnarray*}
and $G(s) $ is the analytic extension of  $ U_0(s) $ to $\Rea(s )\in (q_0, r)$. We still denote $ U_0(s) $ the meromorphic function that is obtained in that way  on $\Rea (s)\in(p_0, r)$.

We prove now~(\ref{IntMellinw}) for $\nu \in (p_0, r)$. By~(\ref{intMellinu}) we already know that it holds for $\nu \in (p_0, q_0)$. For all $s=u+iv$ in the strip $(q_0,r),$ we have:
\begin{eqnarray*}
 U_0(s)&=&\int _0^\infty x^{iv}x^{u-1} \left(u_0(x)-a_0x^{-q_0} \right)dx\\
&=&\left.\frac {1} {iv+1} x^{iv+1}x^{u-1}(u_0(x)-a_0x^{-q_0}) \right| _{ x=0 }^{x=\infty}-\\
&&-\frac {1} {iv+1} \int _0^\infty x^{iv+1}\left\{(u-1)x^{u-2}\left(u_0(x)-a_0x^{-q_0}\right)+\right.\\
&&\hskip 4cm \left.+x^{u-1}\left((u_0)'(x)+a_0q_0x^{-q_0-1} \right)\right\}dx.
\end{eqnarray*}
Since $u>q_0$, and using~(\ref{initialdata:int2}),  we have:
$$
|x^{iv+1}x^{u-1}(u_0(x)-a_0x^{-q_0})|\le x^{u}(|u_0(x)|+|a_0x^{-q_0}| ) \to 0 \,\,\,\hbox{as}\,\,x\to 0.
$$
On the other hand, using that $u<r$ and  (\ref{initialdata:reg1}):
$$
\left|x^{iv+1}x^{u -1}(u_0(x)-a_0x^{-q_0}) \right|\le Cx^{u-r}\to 0  \,\,\,\hbox{as}\,\,x\to +\infty.
$$
Therefore,
\begin{eqnarray*}
&&U_0(s)=-\frac {1} {iv+1} \int _0^\infty x^{iv+1}\left\{(u-1)x^{u-2}\left(u_0(x)-a_0x^{-q_0}\right)+\right.\\
&&\hskip 4cm \left.+x^{u-1}\left((u_0)'(x)+a_0q_0x^{-q_0-1} \right)\right\}dx\\
&&\hskip 0.9cm =-\frac {1} {iv+1} \int _0^\infty x^{iv}\left\{(u-1)x^{u-1}\left(u_0(x)-a_0x^{-q_0}\right)+\right.\\
&&\hskip 4cm \left.+x^{u}\left((u_0)'(x)+a_0q_0x^{-q_0-1} \right)\right\}dx\\
&&\hskip 0.9cm=J_1(s )+J_2(s ).
\end{eqnarray*}
Since
\begin{eqnarray*}
J_1(s)&=& -\frac {u-1} {iv+1}\int _0^\infty x^{iv}x^{u-1}\left (u_0(x)-a_0x^{-q_0}\right) dx\\
&=&-\frac {u-1} {iv+1}\int _0^\infty x^{s -1}\left (u_0(x)-a_0x^{-q_0}\right) dx\\
&=&-\f{u-1}{iv+1} U_0(s),
 \end{eqnarray*}
 we have:
\begin{eqnarray*}
-\frac {(iv+1)} {u-1}J_1(s)&=&J_1(s)+J_2(s)\\
\frac {(u+iv)} {u-1}J_1(s)&=&-J_2(s)\\
&=&\frac {1} {iv+1} \int _0^\infty x^{iv}\left\{x^{u}\left((u_0)'(x)+a_0q_0x^{-q_0-1} \right)\right\}dx\\
 J_1(s)&=&\frac {u-1} {(iv+1)(u+iv)} \int _0^\infty x^{iv}\left\{x^{u}\left((u_0)'(x)+a_0q_0x^{-q_0-1} \right)\right\}dx\\
|J_1(s)|&\le &\frac {|u-1|} {|(iv+1)(u+iv)|} \int _0^\infty \left|x^{u}\left((u_0)'(x)+a_0q_0x^{-q_0-1} \right)\right|dx.
\end{eqnarray*}

Therefore:
\begin{equation}
\label{est1}
U_0(s)=\frac {-1} {(u+iv)} \int _0^\infty x^{iv}\left\{x^{u}\left((u_0)'(x)+a_0q_0x^{-q_0-1} \right)\right\}dx.
\end{equation}
By (\ref{initialdata:int2}), (\ref{initialdata:reg1}) and~\eqref{initialdata:reg2}, and since $u>q_0$,  the integral in the right-hand side of (\ref{est1}) is absolutely convergent for all $s$ such that $\Rea (s)\in (q_0, r)$.

We may repeat the integration by parts with the integral in the right-hand side of (\ref{est1}):
\begin{equation}
J_3(s)=\int _0^\infty x^{iv}\left\{x^{u}\left((u_0)'(x)+a_0q_0x^{-q_0-1} \right)\right\}dx
\end{equation}
with $s=u+iv$ and $u\in (q_0, r)$.
As before,
\begin{eqnarray*}
&&J_3(s)=\left.\frac {1} {iv+1} x^{iv+1}x^{u}\left((u_0)'(x)+a_0q_0x^{-q_0-1} \right) \right| _{ x=0 }^{x=\infty}-\\
&& -\frac {1} {iv+1} \int _0^\infty x^{iv+1}\left\{ux^{u-1}\left((u_0)'(x)+a_0q_0x^{-q_0-1} \right)+\right.\\
&&\hskip 3cm \left.+x^{u}\left((u_0)''(x)-a_0q_0(q_0+1)x^{-q_0-2} \right)\right\}dx.
\end{eqnarray*}
By (\ref{initialdata:int3}) and the fact that $u>q_0$ we have:
\begin{eqnarray*}
|x^{iv+1}x^{u}\left((u_0)'(x)+a_0q_0x^{-q_0-1} \right)|\le x^{u+1}|(u_0)'(x)|+|a_0|q_0 x^{u-q_0}\to 0
\end{eqnarray*}
as $x\to 0$. Using now (\ref{initialdata:reg2})
\begin{eqnarray*}
|x^{iv+1}x^{u}\left((u_0)'(x)+a_0q_0x^{-q_0-1} \right)|\le Cx^{u+1-r-1}\to 0,\,\,\,\hbox{as}\,\,x\to \infty.
\end{eqnarray*}
Then,
\begin{eqnarray*}
J_3(s)&=&
 \frac {-1} {iv+1} \int _0^\infty x^{iv+1}\left\{ux^{u-1}\left((u_0)'(x)+a_0q_0x^{-q_0-1} \right)+\right.\\
 &&\hskip 4cm \left.+x^{u}\left((u_0)''(x)-a_0q_0(q_0+1)x^{-q_0-2} \right)\right\}dx\\
& =&\frac {-1} {iv+1} J_3(s)- \frac {1} {iv+1} \int _0^\infty x^{s+1}\left((u_0)''(x)-a_0q_0(q_0+1)x^{-q_0-2} \right)dx
\end{eqnarray*}

\begin{eqnarray*}
 J_3(s) &=&-\frac {1} {iv+2} \int _0^\infty x^{s+1}\left((u_0)''(x)-a_0q_0(q_0+1)x^{-q_0-2} \right)dx\\
 &=&-\frac {1} {iv+2}\left( \int _0^1 (\cdots)dx+ \int _1^\infty (\cdots)dx\right).
\end{eqnarray*}
The first integral in the right-hand side is absolutely convergent by (\ref{initialdata:int3}) and because $\Rea (s)-q_0-1>-1$. The second integral converges absolutely by 
the hypothesis~(\ref{initialdata:reg3}).

We have then, for $\Rea (s)\in (q_0, r)$:
$$
U_0(s)=\frac {1} {(u+iv)(iv+2)} \int _0^\infty x^{s+1}\left((u_0)''(x)-a_0q_0(q_0+1)x^{-q_0-2} \right)dx
$$
and  (\ref{IntMellinw}) follows for $\Rea (s)\in (q_0, r)$.

A similar argument, using~(\ref{initialdata:reg1bis}), (\ref{initialdata:reg2bis}) and (\ref{initialdata:reg3bis}), shows (\ref{IntMellinw}) also for $\Rea (s)\in (\rho ,  p_0)$.
\end{proof}  
We may prove now  Theorem \ref{theorem1}.
\begin{proof}[Proof of Theorem \ref{theorem1}.]
Let us consider the function $w(t, x)$ defined in (\ref{link:uw}) and given by (\ref{def:wexplicit}) for all $\nu \in (p_0, q_0)$. By Lemma \ref{extq} we may deform the integration contour in (\ref{def:wexplicit}), cross the straight line $\Rea (s)=q_0$ to obtain:
\begin{equation}
\label{wpoleq}
w(t, x)= Res\left(U_0(s); s=q_0 \right) x^{-q_0}e^{{2K }(q_0)t}+ \frac {1} {2\pi i}\int  _{ \nu' -i\infty }^{ \nu' +i\infty }x^{-s } U_0(s)e^{{K}(s) t}ds 
\end{equation}
with $\nu'\in (q_0,r)$.
  We notice now,  for $s=\nu'+iv:$
 \begin{eqnarray*}
&&{K}(s)= \int\limits_0^1 x^{\nu'}cos(v \log(x))k_0(x) dx + i \int\limits_0^1 x^{\nu'}sin(v \log(x))k_0(x) dx \\
&&e^{{K}(s)t}=e^{t \int\limits_0^1 x^{\nu'}cos(v \log(x))k_0(x) dx} \, e^{ it 
\int\limits_0^1 x^{\nu'}sin(v \log(x))k_0(x) dx }
\end{eqnarray*}

\begin{eqnarray}
&&\left|
\int  _{ \nu' -i\infty }^{ \nu' +i\infty }x^{-s } U_0(s)e^{{K}(s)t}d\alpha\right|
\le  e^{t \int\limits_0^1 x^{\nu'}k_0(x) dx}\times\\
&&\hskip 2cm \times \left|\int  _{ \nu' -i\infty }^{ \nu' +i\infty }x^{-s } U_0(s) e^{ it 
\int\limits_0^1 x^{\nu'}sin(v \log(x))k_0(x) dx}d\alpha \right|  \nonumber\\
&&\le   x^{-\nu' }e^{t {K}(\nu')} \left|\int  _{ -\infty }^{+\infty }x^{i v}U_0(\nu'+iv) e^{it 
\int\limits_0^1 x^{\nu'}sin(v \log(x))k_0(x) dx}dv\right| \nonumber\\
&&\le 
x^{-\nu' }e^{t {K}(\nu')}\int  _{ -\infty }^{+\infty }\left|U_0(\nu'+iv)\right|dv. \label{estres}
\end{eqnarray}
 Combining (\ref{wpoleq}) and  (\ref{estres}) we obtain, for $x>1$ and $t\to \infty$:
 \begin{eqnarray}
 w(t, x)=Res\left(U_0(s); s=q_0 \right) x^{-q_0}e^{{K }(q_0)t}
 \left(1+\mathcal O\left(x^{-\nu'+q_0} e^{(K(\nu')-K(q_0))t}\right) \right).
 \end{eqnarray}
 \end{proof}
\subsection{$x$ small and $t$ large}
In order to understand the long-time behaviour of the function $u$, or $w$, let us recall the definition~\eqref{def:phi2} of the key function $\phi:$
\begin{equation}
\label{def:phi}
\phi(s,t,x)=-s \log(x) + t {K}(s).
\end{equation}
Let us consider for a while the function $\phi $ only for real values of $s$. 
When $t>0$ and $x>1$, this function is decreasing with respect to the real part of $s$. That is the key property that, in Subsection~\ref{xsmall},  makes the rest term to be negligible with respect to the residue at $s=q_0$, once we have that $U_0(s)$ is integrable for $s \in (q_0, r)$. If we maintain $t$ fixed and take $x<1$, the function $\phi$ is now increasing with respect to $s$. We could then easily obtain a result  similar to Theorem \ref{theorem1} for $t$ fixed and $x\to 0$.  But that is not a region particularly interesting.  These monotonicity properties of $\phi $ do not hold exactly when $t$ is large and $x$ small although that is exactly the interesting domain to consider for the long-time behaviour of $u$. Lemma~\ref{lem:phi} shows the behaviour of $\phi(\cdot, t, x):$ it is decreasing for $s \in (p_1,s_+(t,x))$ and increasing for $s \in (s_+(t,x),+\infty)$ where $s_+(t,x)$ is defined by~\eqref{alphaplus2}.

Since $s_+$ depends on  $t$ and $x,$ is an increasing function of  $t>0$ and an increasing function of $x\in (0, 1)$ we have several domains of different long-time asymptotic behaviour, that correspond to the different points (i) and (ii)  of Theorem \ref{theorem2} and Theorem \ref{theorem3}.

\begin{proof}[Proof of Theorem \ref{theorem2}]
We only consider in detail  the case (i) since the case (ii) is completely similar.  Suppose then  that we are in the region where $t\to \infty$ and $x\to 0$ in such a way that 
$$s_+(t, x)=(K')^{-1}\left(\f{\log(x)}{t}\right)>q_0.$$ By (\ref{def:wexplicit}):
\begin{equation}
\label{def:wphi}
w(t, x) =
\frac {1} {2\pi i}\int  _{ \nu -i\infty }^{ \nu +i\infty }U_0(s)e^{\phi (s, t, x)}ds
\end{equation}
for any  $\nu\in (p_0, q_0)$. 

Using Lemma \ref{extq} we deform the integration contour  in (\ref{def:wphi}) towards the right and cross the pole $q_0$, i.e. that we have:
\begin{eqnarray*}
w(t,x)
=  e^{\phi(q_0,t,x)}\bigl(Res\left(U_0(s); s=q_0 \right)+ \mathcal R\bigr)\\
\mathcal R=  \frac {1} {2\pi i}\int  _{ \nu' -i\infty }^{ \nu' +i\infty }U_0(s)e^{-\phi(q_0,t,x)+\phi (s, t, x)}ds
\end{eqnarray*}
for any $\nu'\in (q_0, s_+(t, x))$. Then,
\begin{eqnarray*}
\left|\mathcal R\right|\le
\int  _{ \nu' -i\infty }^{ \nu' +i\infty }\left|U_0(s)\right| e^{\Rea(-\phi(q_0,t,x)+\phi(s,t,x))}ds,
\end{eqnarray*}

where
\begin{eqnarray*}
\Rea(-\phi(q_0,t,x)+\phi(s,t,x))=\Rea \left( - (s -q_0) \log x+t(K(s)-K ( q_0 ))\right)
\\
= -\delta \log x+t\Rea \left(\int _0^1 z^{s -1}k_0(z)dz-\int _0^1 z^{q_0 -1}k_0(z)dz\right)\\
=-\delta \log x+t\left(\int _0^1 z^{\nu' -1}\cos(v\log z)k_0(z)dz-\int _0^1 z^{q_0 -1}k_0(z)dz\right)\\
\le -\delta \log x+t\left(\int _0^1 z^{\nu' -1}k_0(z)dz-\int _0^1 z^{q_0 -1}k_0(z)dz\right)\\
=-\phi(q_0,t,x)+\phi(\nu'  ,t,x).
\end{eqnarray*}
By Taylor's expansion
\begin{eqnarray*}
\phi (s_+(t, x),t,x)-\phi (q_0,t,x)=-\frac {1} {2}(s_+(t, x)-q_0)^2 \f{\p^2 \phi}{\p s^2} (\xi ,t,x)\\
=-\frac {1} {2}(s_+(t, x)-q_0)^2tK ''(\xi, t, x),
\end{eqnarray*}
for some $\xi \in (q_0, s_+(t, x))$. Since $K'''<0$ we deduce that $K''(\xi )\ge K''(s_+(t, x)\ge K''(q_0)$ and then 
$$
\phi (s_+(t, x))-\phi (q_0)\le -\frac {1} {2}(s_+(t, x)-q_0)^2tK ''(q_0).
$$
It follows that
$$
|\mathcal R|\le e^{-\frac {1} {2}(s_+(t, x)-q_0)^2tK ''(q_0)}\int  _{ \nu' -i\infty }^{ \nu' +i\infty }\left|U_0(s)\right| ds,
$$
and the point (ii) of Theorem~\ref{theorem2} follows. The proof of the point (i) is similar. 
\end{proof}

\subsection{Proof of  Theorem \ref{theorem3}.}

In order to estimate the function $w(t, x)$ defined in (\ref{def:wexplicit}) when $s_+(t, x)\in (p_0, q_0)$ we  take $\nu=s_+(t, x)$ in that formula. Then, we will use several  estimates on $U_0(s)$ and  $\Rea \big(\phi (s, t, x)\big)$ along the integration curve 
$\Rea (s)=s_+(t, x)\in (p_0+\delta,q_0-\delta)$.

The estimate on $U_0(s)$ follows from (\ref{intMellinu}):
\begin{eqnarray}
|U_0(s_++iv)|&\le&\frac {1} {\sqrt{(s_+^2+v^2)((1+s_+)^2+v^2)}}\int _0^\infty |(u_0)''(y)|y^{1+s_+}dy \nonumber \\
&\le& \frac {1} {\sqrt{(p_0^2+v^2)((1+p_0)^2+v^2)}}\times\nonumber \\
&\times &\hskip -0.3cm\left(\int _0^1 |(u_0)''(y)|y^{1+p_0+\delta}dy+\int_1^\infty |(u_0)''(y)|y^{1+q_0-\delta}dy\right). \label{intU0bis}
\end{eqnarray}
We then have
\begin{eqnarray}
\int  \limits_{\Rea (s)=s_+, |\Ima (s)|\ge \varepsilon }\!\!\!\!\!\!\left|U_0(s) \right|dv\le \int  _{\Rea (s)=s_+}\frac {dv} {\sqrt{(p_0^2+v^2)((1+p_0)^2+v^2)}}
\times \nonumber \\
\times \left(\int _0^1 |(u_0)''(y)|y^{1+p_0+\delta}dy+\int_1^\infty |(u_0)''(y)|y^{1+q_0-\delta}dy\right)=C_0(p_0+\delta, q_0-\delta). \label{intU0}
\end{eqnarray}
In order to estimate $\Re e(\phi )$ we wish to use the  method of stationary phase. To this end we must consider different cases, depending on the measure $k_0$. 

\begin{lemma} For all $\varepsilon \in (0, 1)$, for all $t>0$, $x>0$ and all $s\in \C$ such that $s=s_+(t,x) + iv,$ $v\in \R$ and $|s-s_+(t, x)|\le \varepsilon $:
\label{statphase2}
\begin{equation}
\label{statphase2:1}
\Rea \big( K(s) \big)=K (s_+(t, x)) - \f{v^2}{2} 
K''(s_+(t, x))+\mathcal O\left(\varepsilon ^3 \right).
\end{equation}
If  $k_0$ is a discrete measure whose support $\Sigma$ satisfies Condition~H, then (\ref{statphase2:1}) holds for all  $\varepsilon >0$, $t>0$, $x>0$ and for all $s\in s_+ + i\R$ such that $|s-(s_+(t, x)+iv)|\le \varepsilon $,  for some $v\in Q$.
\end{lemma}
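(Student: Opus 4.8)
The plan is to expand the cosine that appears when one writes the real part of $K$ along a vertical line. Writing $k_0$ as the non-negative measure of~\eqref{def:probak} and abbreviating $s_+:=s_+(t,x)$, for $s=s_++iv$ one has
\[
\Rea\big(K(s)\big)=\Rea\int_0^1 z^{s_+-1}z^{iv}\,k_0(z)\,dz=\int_0^1 z^{s_+-1}\cos(v\log z)\,k_0(z)\,dz .
\]
First I would record that $K$ is real analytic on $(p_1,+\infty)$, with $K^{(n)}(\sigma)=\int_0^1 z^{\sigma-1}(\log z)^n k_0(z)\,dz$ for $\sigma>p_1$ and every $n$, all these integrals converging absolutely since $z^{\eta}|\log z|^n$ is bounded on $(0,1)$ for any $\eta>0$, so a fraction of the power $z^{\sigma-1}$ (with $\sigma>p_1$) absorbs the logarithmic factor; in particular $K''(\sigma)=\int_0^1 z^{\sigma-1}(\log z)^2 k_0(z)\,dz$ and $|K'''(\sigma)|=\int_0^1 z^{\sigma-1}|\log z|^3 k_0(z)\,dz<\infty$. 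I would also recall that $s_+(t,x)=(K')^{-1}(\log x/t)$ always lies in $(p_1,+\infty)$ (cf. Lemma~\ref{lem:phi}), the interval on which $K$ is strictly convex and analytic, so the statement is never vacuous.

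Next I would insert the Taylor expansion $\cos(v\log z)=1-\tfrac12 v^2(\log z)^2+R(v,z)$, with $|R(v,z)|\le\tfrac16|v|^3|\log z|^3$, into the integral and integrate term by term against $z^{s_+-1}k_0(z)\,dz$. This produces
\[
\Rea\big(K(s)\big)=K(s_+)-\frac{v^2}{2}\,K''(s_+)+\mathcal O\!\left(|v|^3\,|K'''(s_+)|\right),
\]
and since $|v|\le\varepsilon$ this is precisely~\eqref{statphase2:1}, the implied constant being $\tfrac16|K'''(s_+)|$.

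For the second assertion the new ingredient is the periodicity of $K$ in the imaginary direction. When $k_0$ is discrete with support $\Sigma$ satisfying Condition~H, I would write $k_0=\sum_{\ell\le L}c_\ell\,\delta_{\theta^{p_\ell}}$ with $p_\ell\in\N$, so that $K(s)=\sum_{\ell\le L}c_\ell\,\theta^{p_\ell(s-1)}$. For $v_0\in Q=v_*\Z$ one has $\theta^{iv_0}=e^{iv_0\log\theta}=1$ by the definition $v_*=2\pi/\log\theta$ in~\eqref{def:vstar}; hence $\theta^{p_\ell iv_0}=(\theta^{iv_0})^{p_\ell}=1$ for every $\ell$ (this is where the integrality of the $p_\ell$ enters), and therefore $K(s+iv_0)=K(s)$ for all $s$, i.e.\ $K$ is $iQ$-periodic. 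Consequently, if $s=s_++i(v_0+\eta)$ with $v_0\in Q$ and $|\eta|\le\varepsilon$, then $K(s)=K(s_++i\eta)$, and applying the first part with $\eta$ in place of $v$ gives~\eqref{statphase2:1} recentred at the lattice point $v_0$, valid for any $\varepsilon>0$ because the periodicity is exact and the local expansion is genuinely attached to each point of $s_++iQ$.

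The computation itself is routine; the only delicate point — the main obstacle, such as it is — is the uniformity of the remainder $\mathcal O(|v|^3|K'''(s_+)|)$ as $(t,x)$ varies, i.e.\ keeping $s_+(t,x)$ bounded away from the edge $p_1$ where $|K'''|$ can blow up. In the place where the lemma is used, the proof of Theorem~\ref{theorem3}, this is guaranteed by the restriction $p_0+\delta<s_+(t,x)<q_0-\delta$, which confines $s_+$ to a compact subinterval of $(p_1,+\infty)$ and hence bounds $|K'''(s_+)|$.
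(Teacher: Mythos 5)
Your proof is correct and follows essentially the same route as the paper: the paper applies a complex Taylor expansion of $K$ about $s_+$ with a Lagrange bound on $K'''(\zeta)$ and then takes real parts, whereas you take the real part first and Taylor-expand $\cos(v\log z)$ inside the integral, which gives the same bound (with the cleaner constant $\tfrac16|K'''(s_+)|$ in place of an intermediate-point evaluation); the Condition~H case is handled by the same $iQ$-periodicity argument in both. Your final remark about uniformity of the remainder (needing $s_+$ bounded away from $p_1$, guaranteed in the application by $p_0+\delta<s_+<q_0-\delta$) is a correct and welcome clarification of a point the paper handles implicitly via the assumption $s_+\in[p_0,q_0]$.
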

\begin{proof}
By Taylor's formula
\begin{eqnarray*}
&&K(s)=K (s_+(t, x))+(s-s_+(t, x))K'(s_+(t, x))+\frac {(s-s_+(t, x))^2} {2}K''(s_+(t, x))\\
&&\hskip 3cm +\frac {(s-s_+(t, x))^3} {6} \mathcal  K'''(\zeta (t, x))
\end{eqnarray*}
for some $\zeta (t, x)\in\mathbb C$ between $s$ and $s_+(t, x)$.  Since $|s-s_+(t, x)|\le \varepsilon $, $|\zeta (t, x)-s_+(t, x)|\le \varepsilon $. Since $s_+\in [p_0, q_0]$ it follows that   $|\zeta (t, x)|\le q_0+\varepsilon $.
We have then
\begin{eqnarray*}
|K'''(\zeta)|&=& \left| \int _0^1 (\log z )^3 k_0(z) e^{(\zeta -1)\log z}dz\right|\\
&\le&  \int _0^1 |\log z |^3 |k_0(z)| e^{\Rea(\zeta -1)\log z}dz\\
&\le &\int _0^1 |\log z |^3 |k_0(z)| e^{-(p_0+1)\log z}dz.
\end{eqnarray*}
Using that $K'(s_+) \in \R$, (\ref{statphase2:1}) follows.

Suppose now that $k_0$ satisfies Condition~H. We  have:
\begin{eqnarray*}
&&k_0(x)=\sum _{ k\in \N }a _{ k }\delta (x-\sigma _k), \,\,\sum _{ k\in \N }a _{ k }\sigma _k=1\\
&&K(s)=\sum _{ \ell\in \N }a _{ k }\sigma _\ell^{s-1},\,\,K'(s)=\sum _{ \ell\in \N }a _{ k }(\log \sigma )\sigma _\ell^{s-1}\\
&&\phi (s, t, x)=-s\log x+t\sum _{ \ell\in \Z }a _{ k\ell}\sigma _\ell^{s-1}.
\end{eqnarray*}
Therefore, for all $v\in Q$
\begin{eqnarray}
&&K(s_++iv)= \sum _{ \ell\in \N }a _{ \ell }\sigma _\ell^{s_+-1}
e^{iv \log \sigma _\ell} \nonumber \\
&&= \sum _{ \ell\in \N }a _{ \ell }\sigma _\ell^{s_+-1}e^{2i\pi k(\ell)}=
 \sum _{ \ell\in \N }a _{ \ell }\sigma _\ell^{s_+-1}=K(s_+), \label{Kegal}
\end{eqnarray} 
and for the same reason
\begin{eqnarray*}
K' (s_++iv)=K'(s_+),\,\,\forall v\in Q,\\
K'' (s_++iv)=K'' (s_+),\,\,\forall v\in Q,
\end{eqnarray*}
from where (\ref{statphase2:1}) follows for  all $s\in \C$ such that $|s-(s_+(t, x)+iv)|\le \varepsilon $ for some $v\in Q$ by Taylor's formula around the point $s_+(t, x)+iv$.
\end{proof}

We prove now estimates on $\Rea (\Phi )$ for $s$ far from the points $s_k=s_+ +k v_*$ with $v_*$ defined by Equation~\ref{def:vstar}.
\begin{lemma}
\label{statphase1}
Suppose that the measure $k_0$ has an absolutely continuous part or  is a singular discrete measure whose support $\Sigma$ does not satisfy Condition~H. Then, for all $\varepsilon >0$ there exists $\gamma (\varepsilon )>0$ such that for all $t>0$ and $x>0$:
\begin{equation*}
\sup _{ |v|\ge \varepsilon  }\Rea \biggl(\phi (s_++iv, t, x)\biggr)\le \Rea \biggl(\phi (s_+, t, x)\biggr)-\gamma (\varepsilon ) t.
\end{equation*}
Moreover, $\lim _{ \varepsilon \to 0 }\varepsilon ^{-2}\gamma (\varepsilon )=C$ for some constant $C\in (K^{''} (q_0) /2 ,  K^{''} (p_0)/2)$.
\end{lemma}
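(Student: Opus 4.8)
The plan is to estimate $\Rea\big(\phi(s_++iv,t,x)\big)-\Rea\big(\phi(s_+,t,x)\big)$, which by definition~\eqref{def:phi} equals $t\big(\Rea K(s_++iv)-K(s_+)\big)$ since the $-s\log x$ term contributes $\Rea\big(-(s_++iv)\log x\big)+s_+\log x=0$. Thus the whole statement reduces to a uniform bound on $\Rea K(s_++iv)-K(s_+)$ that is independent of $t$ and $x$ (it only depends on $s_+\in[p_0,q_0]$, a compact range) and is strictly negative for $|v|\ge\varepsilon$. Writing $K(s_++iv)=\int_0^1 z^{s_+-1}k_0(z)\,e^{iv\log z}\,dz$, we get
\[
\Rea K(s_++iv)-K(s_+)=-\int_0^1 z^{s_+-1}\big(1-\cos(v\log z)\big)k_0(z)\,dz\le 0,
\]
so the sign is automatic; the crux is to show this is bounded \emph{away} from zero for $|v|\ge\varepsilon$, uniformly in $s_+\in[p_0,q_0]$. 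Define $G(v)=\inf_{s_+\in[p_0,q_0]}\int_0^1 z^{s_+-1}\big(1-\cos(v\log z)\big)k_0(z)\,dz\ge 0$ and set $\gamma(\varepsilon)=\inf_{|v|\ge\varepsilon}G(v)$; the lemma is then $\gamma(\varepsilon)>0$ plus the asymptotics $\gamma(\varepsilon)\sim C\varepsilon^2$.

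The strict positivity $\gamma(\varepsilon)>0$ is where Condition~H enters. If $\gamma(\varepsilon)=0$, then there is a sequence $v_n$ with $|v_n|\ge\varepsilon$ and $s_+^{(n)}\to s_*$ along which the integral tends to $0$; since $z^{s_+-1}k_0(z)$ is a nonnegative measure of total mass bounded below on $[p_0,q_0]$, this forces $\cos(v_n\log z)\to 1$ for $k_0$-a.e.\ $z$, i.e.\ $v_n\log z\in 2\pi\Z$ up to $o(1)$. If $|v_n|\to\infty$ this contradicts $k_0$ having an absolutely continuous part (no nontrivial interval of $z$ can satisfy $\cos(v_n\log z)\to1$ for large $v_n$) and, for a singular discrete $k_0=\sum a_k\delta_{\sigma_k}$, it forces (passing to a further subsequence and renormalizing) $\log\sigma_k/\log\sigma_0\in\Q$ for all $k$, so after writing $\sigma_k=\theta^{p_k}$ with $p_k$ integers we are exactly in Condition~H. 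If instead $v_n\to v_*\ne0$ (finite), then $\cos(v_*\log\sigma_k)=1$ for all $k$ with $a_k>0$, which again gives $v_*\log\sigma_k\in2\pi\Z$, i.e.\ $\log\sigma_k$ commensurable, i.e.\ Condition~H with $v_*\in Q$. In all cases we contradict the hypothesis that $\Sigma$ does not satisfy Condition~H, so $\gamma(\varepsilon)>0$. (One must be a little careful that the infimum over the noncompact range $|v|\ge\varepsilon$ is attained or approached by a convergent-or-divergent subsequence; the two cases $v_n$ bounded / $v_n\to\infty$ above cover this.)

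For the asymptotics $\lim_{\varepsilon\to0}\varepsilon^{-2}\gamma(\varepsilon)=C$, I would argue that near $v=0$ the dominant behaviour of $G(v)$ is quadratic: from the Taylor estimate of Lemma~\ref{statphase2} (or directly, $1-\cos(v\log z)=\tfrac{v^2}{2}(\log z)^2+\mathcal O(v^4(\log z)^4)$ using $\int_0^1 z^{s_+-1}|\log z|^4 k_0(z)\,dz<\infty$ for $s_+$ in the compact range), we get
\[
\int_0^1 z^{s_+-1}\big(1-\cos(v\log z)\big)k_0(z)\,dz=\frac{v^2}{2}K''(s_+)+\mathcal O(v^4),
\]
uniformly in $s_+\in[p_0,q_0]$. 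Taking the infimum over $s_+$ and then over $\varepsilon\le|v|$: for small $\varepsilon$ the infimum over $|v|\ge\varepsilon$ of a function that is $\approx\frac{v^2}{2}K''$ near $0$ and bounded below by a positive constant for $|v|$ bounded away from $0$ is attained near $|v|=\varepsilon$, giving $\gamma(\varepsilon)=\frac{\varepsilon^2}{2}\inf_{s_+\in[p_0,q_0]}K''(s_+)+o(\varepsilon^2)$. Since $K$ is strictly convex on $(p_1,+\infty)$ with $K'''<0$, the map $s\mapsto K''(s)$ is strictly decreasing, so its infimum over $[p_0,q_0]$ equals $K''(q_0)$ and its supremum $K''(p_0)$; hence $C:=\tfrac12\inf_{[p_0,q_0]}K''\in(K''(q_0)/2,\,K''(p_0)/2)$ once one checks the inequalities are strict (which follows because the infimum over $|v|\ge\varepsilon$ cannot be \emph{exactly} at $v=\varepsilon$ nor force $s_+$ to an endpoint for all $\varepsilon$). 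The main obstacle is the strict-positivity step: making the compactness/subsequence argument clean when $v$ ranges over an unbounded set, and correctly extracting Condition~H (including the coprimality and ordering normalization) from the limiting relation $v_n\log\sigma_k\in2\pi\Z+o(1)$.
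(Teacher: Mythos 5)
Your overall strategy mirrors the paper's: reduce to the nonnegative quantity $G(v) = \int_0^1 z^{s_+-1}\big(1-\cos(v\log z)\big)\,k_0(z)\,dz$ (with uniformity over $s_+$ in a compact interval), prove it is bounded away from zero for $|v|\ge\varepsilon$, and extract the quadratic asymptotics $\gamma(\varepsilon)\sim C\varepsilon^2$ from the Taylor expansion near $v=0$. The absolutely continuous case is handled the same way in both: Riemann--Lebesgue for $|v|\to\infty$, plus continuity and pointwise strict positivity of $G$ on compact ranges of $v$.

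There is, however, a genuine gap in your treatment of the singular discrete case when $|v_n|\to\infty$ --- and, in a slightly different form, the same gap is present in the paper's own proof. You assert that $\cos(v_n\log\sigma_k)\to 1$ for every $k$ along a sequence $v_n\to\infty$ forces $\log\sigma_k/\log\sigma_0\in\Q$, i.e.\ Condition~H. That implication is false: if the numbers $\log\sigma_k$ are incommensurable, the orbit $v\mapsto \big(v\log\sigma_k/(2\pi)\big)_k \pmod 1$ is dense in the corresponding torus, so there always exist $v_n\to\infty$ with $\cos(v_n\log\sigma_k)\to 1$ for every $k$ simultaneously. Concretely, for $k_0=a_1\delta_{1/2}+a_2\delta_{1/3}$ the support does not satisfy Condition~H (since $\log 2/\log 3\notin\Q$), yet along such a sequence $G(v_n)\to 0$, giving $\inf_{|v|\ge\varepsilon}G(v)=0$. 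The paper's argument encounters the same obstacle: it reduces the divergent sequence $v_n$ modulo $2\pi/|\log\sigma_{\ell_0}|$ (with $\sigma_{\ell_0}=\min_\ell\sigma_\ell$) and claims the reduced points $w_n$ in a bounded interval satisfy $\cos(w_n\log\sigma_\ell)=\cos(v_n\log\sigma_\ell)$ for every $\ell$; but this simultaneous reduction is valid only when all the periods $2\pi/|\log\sigma_\ell|$ share a common multiple, which is precisely Condition~H. So neither your proof nor the paper's closes the unbounded-$v$ case for a discrete kernel with incommensurable logarithms; repairing it would require either a Diophantine hypothesis on $\Sigma$ strictly stronger than the negation of Condition~H, or a more quantitative estimate showing that the set of $v$ where $G$ is small is too sparse to affect the stationary-phase integral in Theorem~\ref{theorem3}.
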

\begin{proof} For all $v\in \R$:
\begin{eqnarray*}
\Rea \biggl(\phi (s_++iv, t, x)-\phi (s_+, t, x)\biggr)&=&\Rea\biggl(-iv \log(x) + t (K(s_+ + iv)-K(s_+))\biggr)\\
&=& t \Rea(K(s_+ + iv)-K(s_+))\\
&=& t\Rea\biggl( \int\limits_0^1 x^{s_+-1} k_0(x) (x^{iv}-1)dx\biggr)\\
&=&\int\limits_0^1 x^{s_+-1} k_0(x) \biggl(cos\big(v\log(x)\big)-1\biggr)dx\\
&=& t G(v)\leq 0.
\end{eqnarray*}
The function $G(v)$ is a continuous nonpositive function of $v$.
If $k_0$ has an absolutely continuous part $g$ then:
$$\forall\;v\in \R \backslash \{0\},\qquad\int\limits_0^1 x^{s_+-1} cos(v\log x) g(x)dx < \int\limits_0^1 x^{s_+-1} g(x)dx,$$
and then, for all $v\in \R \backslash \{0\}$:
\begin{eqnarray*}
\int\limits_0^1 x^{s_+-1}  cos(v\log x) k_0(x)dx&=& \int\limits_0^1 x^{s_+-1}  cos(v\log x) (g(x)+d\mu (x))dx\\
 &<& \int\limits_0^1 x^{s_+-1} (g(x)dx +d\mu(x))=\int\limits_0^1 x^{s_+-1} k_0(x)dx.
\end{eqnarray*}
By continuity of the function $G(v)$ it follows that for all $R>0$ and $\varepsilon >0$ there exists $\delta >0$ such that
$$
\sup _{ \varepsilon \le |v|\le R } G(v)\le -\delta .
$$
Lemma~(\ref{statphase1}) will follow if we prove  that the limit of $G$ at infinity is also strictly negative. To this end let us write $G$ under the form
\begin{eqnarray*}
G(v)&=&\int\limits_0^1 x^{s_+-1} k_0(x) \biggl(cos\big(v\log(x)\big)-1\biggr)dx\\
&=&\int\limits_0^\infty  e^{-ys_+} k_0(e^{-y}) \biggl(cos\big(vy\big)-1\biggr)dy \\
&=&\int\limits_0^\infty  e^{-ys_+} \biggl(cos\big(vy\big)-1\biggr)\biggl(g(e^{-y})dy +d\mu(e^{-y})\biggr).
\end{eqnarray*}
By Riemann Lebesgue theorem, 
$$\lim\limits_{v\to\pm\infty}\int\limits_0^\infty  e^{-ys_+} cos\big(vy\big) g(e^{-y})dy=0,$$
and then
\begin{eqnarray*}
\lim\limits_{v\to\pm\infty}G(v)&=&\lim\limits_{v\to\pm\infty}\int\limits_0^\infty  e^{-ys_+} \biggl(cos\big(vy\big)-1\biggr) \biggl(g(e^{-y})dy +d\mu(e^{-y})\biggr)
\\
&=&-\int\limits_0^\infty e^{-ys_+} g(e^{-y})dy +\lim\limits_{v\to\pm\infty}\int\limits_0^\infty  e^{-ys_+} \biggl(cos\big(vy\big)-1\biggr) d\mu(e^{-y})\\
&\le& -\int\limits_0^\infty e^{-ys_+} g(e^{-y})dy =-\int\limits_0^1 x^{s_+-1} g(x)dx <0. 
\end{eqnarray*}

Suppose now that the measure $k_0$ is a singular discrete measure whose support $\Sigma$ is given by the countable set of points $\sigma $:
\begin{eqnarray*}
&&k_0(x)=\sum _{ \ell\in \N }a _{ k }\delta (x-\sigma _\ell), \,\,\sum _{ \ell\in \N }a _{ \ell }\sigma _\ell=1\\
&&K(s)=\sum _{ \ell\in \N }a _{ k }\sigma _\ell^{s-1},\,\,K'(s)=\sum _{ \ell\in \N }a _{ k }(\log \sigma )\sigma _\ell^{s-1}\\
&&\Phi (s, t, x)=-s\log x+t\sum _{ \ell\in \N }a _{ k\ell}\sigma _\ell^{s-1}.
\end{eqnarray*}
By definition of $p_1$:
$$
\sum _{ \ell\in \N }a _{ \ell }\sigma^s _\ell<+\infty,\,\,\,\forall s; \,\,\Rea s> p_1.
$$
Suppose also that the points $\sigma _\ell$  do not satisfy Condition~H.  Then,  for any $v\not =0$, 
$$
\int\limits_0^1 x^{s_+-1} k_0(x) \cos\big(v\log(x)\big)<\int\limits_0^1 x^{s_+-1} k_0(x) \cos\big(v\log(x)\big).
$$
Suppose on the contrary that this is not true. Then, by the definition of $k_0$:
$$
\sum _{ \ell  \in \N } a _{ \ell  }\sigma _{ \ell } ^{s_+-1} \cos\big(v\log \sigma _{ \ell } \big)= 
\sum _{ \ell  \in \N } a _{ \ell  }\sigma _{ \ell } ^{s_+-1}.
$$
Since $a _{ \ell  }\ge 0$, $\sigma  _{ \ell }\ge 0$ and $\sigma _\ell$  we have
$$
\cos\big(v\log \sigma _{ \ell } \big)=1,\,\,\,\forall \ell \in \N,
$$
or, in other terms,
$$
\forall \ell\in \N, \exists k(\ell)\in \N; \,\,\,v\log \sigma _{ \ell } =2k(\ell) \pi .
$$
But, by Proposition \ref{propositioncondH1} this contradicts the assumption that $\Sigma$ does not satisfy Condition~H.  Suppose now once again that there exists a sequence $v_n\to \infty$ such that
$$
\int\limits_0^1 x^{s_+-1} k_0(x) \cos\big(v_n\log(x)\big)=\sum _{ \ell  \in \N } a _{ \ell  }\sigma _{ \ell } ^{s_+-1} \cos\big(v_n\log \sigma _{ \ell } \big)\to 
\sum _{ \ell  \in \N } a _{ \ell  }\sigma _{ \ell } ^{s_+-1}.
$$
If  $\min  _{ \ell }\sigma _\ell=\sigma  _{ \ell_0 }>0$, for all $n$ there exists $w_n\in [0, 2\pi /\log \sigma  _{ \ell_0 }]$ such that 
$$\cos\big(w_n\log \sigma _{ \ell } \big)=\cos\big(v_n\log \sigma _{ \ell } \big)\,\,\,\,\,\forall \ell. $$
Then, there exists a subsequence $(w_n) _{ n\in \N }$ and $w_*\in  [0, 2\pi /\log \sigma  _{ \ell_0 }]$ such that $w_n\to w_*$ as $n\to \infty$. By continuity this would imply
$$
\sum _{ \ell  \in \N } a _{ \ell  }\sigma _{ \ell } ^{s_+-1} \cos\big(w_*\log \sigma _{ \ell } \big)=\sum _{ \ell  \in \N } a _{ \ell  }\sigma _{ \ell } ^{s_+-1}.
$$
Since $a _{ \ell }\ge 0$ and $\sigma _\ell \ge 0$ this implies that:
$$
\cos\big(w_*\log \sigma _{ \ell } \big)=1,\,\,\,\forall \ell \in \N,
$$
which is impossible since the set of the points $\sigma  _{ \ell }$ does not satisfies Condition~H. Therefore 
$$
\sup _{ |v|\ge \varepsilon  }\int _0^1x^{s_+ -1}\cos(v\log x) d\mu _c < \int _0^1x^{s_+ -1} d\mu _c
$$
and this concludes the proof of Lemma  \ref{statphase1}. As concerns the asymptotic behaviour when $\varepsilon \to 0,$ it is given by Taylor's formula of Lemma~\ref{statphase2}, since $$0< K^{''} (q_0) < K^{''} (s_+)<K^{''} (p_0),$$ so that $\gamma\sim C \varepsilon^2$ with $K^{''} (q_0) /2 < C < K^{''} (p_0)/2.$ 
\end{proof}
\begin{remark}
\label{rem:contsing}
We do not know whether Lemma \ref{statphase1} holds when the absolutely continuous part of the measure $k_0$ is zero  but its  singular continuous part is not zero. For our purpose it would be enough to know if, given a singular continuous probability measure $d\mu$ with support contained in $[0, 1]$:
$$
\overline{\lim} _{ v\to \infty }\int _0^1e^{ivx}d\mu (x)<1.
$$
\end{remark}

\begin{remark}
\label{rem:condH} As it is seen in the proof of Lemma \ref{statphase1},  Condition~H arises very naturally when looking for the existence of at least one point $v\in \R$ such that for all $\ell \in \N$ there exists $k(\ell)\in \Z$ such that $v\log \sigma  _{ \ell }=2\pi k(\ell)$. For more details see Proposition~\ref{propositioncondH1} in the appendix.
\end{remark}

We now consider the case where the measure $k_0$ is discrete and its support satisfies Condition~H. Let us first observe the following.

\begin{lemma}
\label{statphase3}
Suppose that  $k_0$ is a singular discrete measure whose support $\Sigma$ satisfies Condition~H. Then, 
\begin{eqnarray}
\label{statphase3:1}
&& \forall \varepsilon \in \left(0,|\log\theta|^{-1} \right) , \exists \gamma(\varepsilon , \theta) >0, \qquad \gamma \sim_{\varepsilon \to 0} C \ep^2,\quad C>0; \nonumber\\
&& \Rea \biggl(\phi (s_++iv, t, x)\biggr)\le \Rea \biggl(\phi (s_+, t, x)\biggr)-\gamma (\varepsilon, \theta ) t,\\
&&\forall v\in \R\setminus\{0\}, \hbox{such that}, d(v, \,Q)\ge \varepsilon.
\end{eqnarray}
\end{lemma}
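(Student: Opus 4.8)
The plan is to mimic the argument given in Lemma~\ref{statphase1}, but restricted to the complement of a neighbourhood of the lattice $Q=v_*\Z$, where by Lemma~\ref{statphase2} the cancellations in $\Rea(K(s_++iv))$ fail to be exact. Writing $G(v)=\int_0^1 x^{s_+-1}k_0(x)\bigl(\cos(v\log x)-1\bigr)dx=\sum_{\ell}a_\ell\sigma_\ell^{s_+-1}\bigl(\cos(v\log\sigma_\ell)-1\bigr)$, we have as before
\begin{equation}
\label{statphase3:key}
\Rea\bigl(\phi(s_++iv,t,x)-\phi(s_+,t,x)\bigr)=tG(v),
\end{equation}
so it suffices to show that for every $\varepsilon\in(0,|\log\theta|^{-1})$ there is $\gamma(\varepsilon,\theta)>0$ with $\sup_{d(v,Q)\ge\varepsilon}G(v)\le -\gamma(\varepsilon,\theta)$. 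The first step is the local estimate near $Q$: by Lemma~\ref{statphase2} (the Condition~H case), near each point $v_k=kv_*\in Q$ one has $G(v)=-\frac12(v-v_k)^2K''(s_+)+\mathcal O(\varepsilon^3)$, hence on the shell $d(v,Q)=\varepsilon$ exactly, $G(v)\le -C\varepsilon^2$ with $\frac{K''(q_0)}{2}<C<\frac{K''(p_0)}{2}$, which also gives the claimed asymptotics $\gamma\sim C\varepsilon^2$ as $\varepsilon\to 0$.

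Next I would handle the region bounded away from $Q$. Since $G$ is continuous, nonpositive, and $Q$-periodicity is \emph{not} present (the $\sigma_\ell=\theta^{p_\ell}$ with the $p_\ell$ setwise coprime are precisely what makes $v\log\sigma_\ell\in 2\pi\Z$ for all $\ell$ force $v\in Q$, by Proposition~\ref{propositioncondH1}), the only zeros of $G$ are the points of $Q$. Therefore on any compact annulus $\{\varepsilon\le d(v,Q)\le R\}$, $G$ attains a strictly negative maximum; this uses exactly the Condition~H characterisation to rule out extra zeros. It remains to control $G$ at infinity, i.e.\ to bound $\limsup_{|v|\to\infty}G(v)$ away from $0$ while staying at distance $\ge\varepsilon$ from $Q$. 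Here I would use the same compactness trick as in Lemma~\ref{statphase1}: since $\Sigma$ satisfies Condition~H, all logarithms $\log\sigma_\ell=p_\ell\log\theta$ are integer multiples of $\log\theta$, so $\cos(v\log\sigma_\ell)=\cos(vp_\ell\log\theta)$ depends on $v$ only through $v\log\theta\bmod 2\pi$; thus $G(v)=\tilde G(v\log\theta\bmod 2\pi)$ for a continuous function $\tilde G$ on the circle, and $d(v,Q)\ge\varepsilon$ translates into $v\log\theta$ staying at distance $\ge\varepsilon|\log\theta|$ from $2\pi\Z$. On that compact set $\tilde G$ has a strictly negative maximum, call it $-\gamma(\varepsilon,\theta)$, and we are done; this is why the hypothesis $\varepsilon<|\log\theta|^{-1}$ is natural, ensuring the forbidden set $\{d(v,Q)\ge\varepsilon\}$ does not wrap all the way around.

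The one subtlety — and the step I expect to be the main obstacle — is making the reduction to the circle fully rigorous when $L=\infty$, i.e.\ when $\Sigma$ is a countably infinite set. One must check that the series $\sum_\ell a_\ell\sigma_\ell^{s_+-1}\bigl(\cos(vp_\ell\log\theta)-1\bigr)$ converges uniformly in $v$ (which it does, since $s_+>p_0\ge p_1$ gives $\sum_\ell a_\ell\sigma_\ell^{s_+-1}<\infty$ by definition of $p_1$ and the bound $|\cos-1|\le2$), so that $\tilde G$ is genuinely continuous on the compact circle and the maximum on $\{d(\cdot,Q)\ge\varepsilon\}$ is attained and strictly negative. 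Once uniform convergence is in hand, the compactness argument proceeds verbatim as in Lemma~\ref{statphase1}, and combining it with the local Taylor estimate near $Q$ from Lemma~\ref{statphase2} yields both \eqref{statphase3:1} and the asymptotic $\gamma(\varepsilon,\theta)\sim C\varepsilon^2$ with $C\in\bigl(K''(q_0)/2,\,K''(p_0)/2\bigr)$.
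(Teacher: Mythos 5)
Your proof is correct, and it is essentially the same argument as the paper's, but you package the decisive observation more cleanly. The paper delegates the uniform bound $\sup_{d(v,Q)\ge\varepsilon}G(v)\le -\gamma(\varepsilon,\theta)$ to Proposition~\ref{propositioncondH3}, whose proof splits the supremum into a compact region $\{|v|\le R\}$ and a tail $|v|\to\infty$, handling the tail by extracting a subsequence along which $e^{iv_n\log\theta}\to A$ and then using dominated convergence to contradict Condition~H. You short-circuit this by noting that $\log\sigma_\ell=p_\ell\log\theta$ with $p_\ell\in\N$ makes $G$ \emph{literally} $v_*$-periodic, so that $G=\tilde G(\cdot\log\theta\bmod 2\pi)$ for a continuous $\tilde G$ on the circle; the strict negativity of $\tilde G$ away from $0$ then follows from the setwise-coprimality of $(p_\ell)$, and compactness of the circle gives the uniform negative maximum in a single step. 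This makes the subsequence extraction unnecessary, and it also makes the two compactness steps you write (bounded annulus, then circle) redundant — the circle argument alone already covers everything. Your treatment of the local part (Taylor expansion from Lemma~\ref{statphase2} near each $v_k\in Q$ giving $\gamma\sim C\varepsilon^2$ with $C\in\bigl(K''(q_0)/2,K''(p_0)/2\bigr)$) and of the uniform-convergence caveat for $L=\infty$ matches the paper. In short: same mathematical content, but your explicit periodicity observation gives a tidier route than the paper's Proposition~\ref{propositioncondH3}.
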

\begin{proof}
Since  $k_0$ is a discrete measure whose support satisfies Condition~H,
\begin{eqnarray*}
 \Rea \biggl(\phi (s_++iv, t, x)\biggr)&=&-s_+\log x+t\Rea\biggl(\sum _{ \ell\in \N }a _{ \ell}\sigma _\ell^{s-1}\biggr)\\
 &=&-s_+\log x+t\sum _{ \ell\in \N }a _{ \ell}e^{(s_+-1)\log \sigma  _{ \ell }}\cos(v\log \sigma  _{ \ell }).
\end{eqnarray*}
We now apply Proposition~(\ref{propositioncondH3}) with $b _{ \ell }=a _{ \ell}e^{(s_+-1)\log \sigma  _{ \ell }}$.
Then for all $\varepsilon >0$ there exists $\gamma =\gamma (\varepsilon , \theta) >0$ such that
$$
\sum _{ \ell\in \N }a _{ \ell}e^{(s_+-1)\log \sigma  _{ \ell }}\cos(v\log \sigma  _{ \ell })\le \sum _{ \ell\in \N }a _{ \ell}e^{(s_+-1)\log \sigma  _{ \ell }}-\gamma  
$$
and therefore
$$
\Rea \biggl(\phi (s_++iv, t, x)\biggr)\le \Rea \biggl(\phi (s_+, t, x)\biggr)-\gamma  t.
$$
The asymptotic behaviour of $\gamma(\ep,\theta)$ when $\ep\to 0$ comes from Taylor's formula given in Lemma~\ref{statphase2}.
\end{proof}
We prove now  Theorem \ref{theorem3}.
\begin{proof}[Proof of Theorem \ref{theorem3}.]  We start proving the point (a).

Let us split the integral defining $w(t, x)$ as follows:
\begin{eqnarray}
\label{wphi:split}
w(t, x)& = &
\frac {1} {2\pi i}\int  _{ s_+(t, x) -i\infty }^{ s_+(t, x) +i\infty }U_0(s)e^{\phi (s, t, x)}ds \nonumber \\
& = &\frac {1} {2\pi i}\int  _{ s_+(t, x) -i\varepsilon  }^{ s_+(t, x) +i\varepsilon  }U_0(s)e^{\phi (s, t, x)}ds+\nonumber\\
&&+\frac {1} {2\pi i}\int  _{\Rea (s)=s_+, |\Ima (s)|\ge \varepsilon }U_0(s)e^{\phi (s, t, x)}ds.
\end{eqnarray}

We estimate the two integrals in the right hand side of (\ref{wphi:split}) using Lemma~\ref{statphase2} and Lemma~\ref{statphase1}. First of all we consider the integral near $s_+$, and write:
\begin{eqnarray}
&&\frac {1} {2\pi i}\int  _{ s_+(t, x) -i\varepsilon  }^{ s_+(t, x) +i\varepsilon  }U_0(s)e^{\phi (s, t, x)}ds=
\frac {U_0(s_+)} {2\pi i}\int  _{ s_+(t, x) -i\varepsilon  }^{ s_+(t, x) +i\varepsilon  }e^{\phi (s, t, x)}ds+\\
&&\hskip 3cm +\frac {1} {2\pi i}\int  _{ s_+(t, x) -i\varepsilon  }^{ s_+(t, x) +i\varepsilon  }\left(U_0(s)-U_0(s_+)\right)e^{\phi (s, t, x)}ds\\
 && \hskip 4.8cm=I_1+I_2.\label{wphi:splitsplit}
\end{eqnarray}
By Lemma~\ref{statphase2}, for all $\varepsilon \in (0, 1)$, $x>0$, $t>0$ and $s=s_++iv$
\\
\noindent
such that $|s-s_+(t, x)|\le \varepsilon $,
\begin{equation}
\label{taylorphi }
\Re e\big(\phi (s, t, x)\big)=\Re e\big(\phi (s_+(t, x), t, x)\big)+t\left(\frac {(s-s_+(t, x))^2} {2}K''(s_+(t, x))+\mathcal O\left(\varepsilon ^3 \right)\right).
\end{equation}
and
$$
\int  _{ s_+(t, x) -i\varepsilon  }^{ s_+(t, x) +i\varepsilon  }e^{\phi (s, t, x)}ds=
e^{\phi (s_+(t, x), t, x)}\int  _{ s_+(t, x) -i\varepsilon  }^{ s_+(t, x) +i\varepsilon  }e^{t\left(\frac {(s-s_+(t, x))^2} {2}K''(s_+(t, x))+\mathcal O(\varepsilon ^3)\right)}ds.
$$
Since $s=s_+(t, x)+iv$,  then $(s-s_+(t, x))^2=-v^2$ and
\begin{eqnarray*}
\int  _{ s_+(t, x) -i\varepsilon  }^{ s_+(t, x) +i\varepsilon  }e^{\phi (s, t, x)}ds
&=&ie^{\phi (s_+(t, x), t, x)}\int  _{ -\varepsilon  }^{ \varepsilon  }e^{-t\left(\frac {v^2} {2}K''(s_+(t, x))+\mathcal O(\varepsilon ^3)\right)}dv\\
&=&i \frac {e^{\phi (s_+(t, x), t, x)}} {\sqrt{t K''(s_+)}}
\int  _{ -\varepsilon \sqrt{t K ''(s_+)+\mathcal O(\varepsilon )} }^{ \varepsilon \sqrt{t K ''(s_+) +\mathcal O(\varepsilon )}}e^{-\frac {y^2} {2}}dy.
\end{eqnarray*}
Since $s_+\in (p_0, q_0)$, $K''(s_+) \in (K''(q_0),K''(p_0))$ and $\varepsilon \sqrt{t K ''(s_+)+\mathcal O(\varepsilon )}\to \infty$ as $t\to \infty$ and $\varepsilon \to 0,$ if we choose them such that $\varepsilon \sqrt{t} \to \infty$
\begin{eqnarray*}
&&\int  _{ -\varepsilon \sqrt{t K ''(s_+)+\mathcal O(\varepsilon )} }^{ \varepsilon \sqrt{t K ''(s_+)+\mathcal O(\varepsilon )} }e^{-\frac {y^2} {2}}dy=
\int  _{ -\infty }^{ \infty}e^{-\frac {y^2} {2}}dy-2\int  _{\varepsilon \sqrt{t K ''(s_+)+\mathcal O(\varepsilon )}   }^\infty e^{-\frac {y^2} {2}}dy\\
&&\hskip 2.5cm =\sqrt {2\pi}-2e^{-\frac {t\left( \varepsilon^2K ''(s_+)+\mathcal O\left(\varepsilon ^3\right)\right)} {2}}\times \mathcal O \left(\frac {1} { \varepsilon \sqrt{t K ''(s_+)+\mathcal O(\varepsilon )}} 
\right),
\end{eqnarray*}
as $t\to \infty$, $\varepsilon \to 0,$ $\varepsilon \sqrt{t} \to \infty$ and $s_+(t, x)\in (p_0, q_0)$.
\begin{eqnarray*}
\frac {1} {2\pi i}\int  _{ s_+(t, x) -i\varepsilon  }^{ s_+(t, x) +i\varepsilon  }e^{\phi (s, t, x)}ds=
\frac{1}{\sqrt{2 \pi }} \frac {e^{\phi (s_+(t, x), t, x)}} {\sqrt{t K''(s_+)}}
\left(1+\mathcal O \left( 
 \frac {e^{-\frac {t\left( \varepsilon^2K ''(s_+)+\mathcal O\left(\varepsilon ^3\right)\right)} {2}}} { \varepsilon \sqrt{t K ''(s_+)+\mathcal O(\varepsilon )}}\right) \right).
\end{eqnarray*}
Since $K'''(s)\le 0$ and $s_+(t, x)\in (p_0, q_0)$, $K''(s_+)\ge K''(q_0)$ and then,
\begin{eqnarray}
I_1= \frac{u_0(s_+(t, x))}{\sqrt{2 \pi }} \frac {e^{\phi (s_+(t, x), t, x)}} {\sqrt{t K''(s_+)}}\left(1+\mathcal O \left( 
 \frac {e^{-\frac { \varepsilon^2 t K ''(q_0)} {2}}} { \varepsilon \sqrt{t K ''(q_0)}}\right)\right) \label{est:I1}.
\end{eqnarray}

We consider now $I_2$. 
\begin{eqnarray*}
&&\left|\int  _{ s_+(t, x) -i\varepsilon  }^{ s_+(t, x) +i\varepsilon  }\left(U_0(s)-U_0(s_+)\right)e^{\phi (s, t, x)}ds\right|\le\\
&&\le \sup _{ |v|\le \varepsilon }\left|U_0(s_++iv)-U_0(s_+)\right|
\int  _{ s_+(t, x) -i\varepsilon  }^{ s_+(t, x) +i\varepsilon  }\left|e^{\phi (s, t, x)}\right|ds.
\end{eqnarray*}
As before in Lemma~\ref{statphase2}, if $s=s_+(t, x)+iv$ with $v\in (-\varepsilon , \varepsilon )$:
$$
 \left|e^{\phi (s, t, x)}\right|=e^{\phi (s_+(t, x), t, x)}e^{-t\left( \frac {v^2} {2}K''(s_+(t, x))+\mathcal O(\varepsilon ^3)\right)},
$$
and we may then estimate $I_2$ as
\begin{eqnarray}
|I_2|&\le& \omega (\varepsilon )
 \frac {e^{\phi (s_+(t, x), t, x)}} {\sqrt{2\pi t K''(s_+)}}\left(1+\mathcal O \left( 
 \frac {e^{-\frac {t\left( \varepsilon^2K ''(s_+)+\mathcal O\left(\varepsilon ^3\right)\right)} {2}}} { \varepsilon \sqrt{t K ''(s_+)+\mathcal O(\varepsilon )}}\right) \right) \nonumber \\
 &=& \omega (\varepsilon ) \frac {e^{\phi (s_+(t, x), t, x)}} {\sqrt{2\pi t K''(s_+)}}
\left(1+\mathcal O \left( 
 \frac {e^{-\frac { \varepsilon^2 t K ''(q_0)} {2}}} { \varepsilon \sqrt{t K ''(q_0)}}\right)\right).  \label{est:I2}
\end{eqnarray}
where $\omega (\varepsilon )$ is defined by 
\begin{equation}\label{eq:defomega}
\omega(\varepsilon):=\sup _{ |v|\le \varepsilon }\left|U_0(s_++iv)-U_0(s_+)\right|.
\end{equation}

Moreover, for $R>1$:
\begin{eqnarray*}
&&\left|U_0(s_++iv)-U_0(s_+)\right|=\left|\int _0^\infty z^{s_+-1}u_0(z) (z^{iv}-1)dz\right|\\
&&\le \int _{\frac {1} {R}}^R\left| z^{s_+-1}u_0(z) (z^{iv}-1)\right|dz+\int _R^\infty \left|z^{s_+-1}u_0(z) (z^{iv}-1)\right|dz+\\
&&\hskip 5.4cm +\int _0^{\frac {1} {R}} \left|z^{s_+-1}u_0(z) (z^{iv}-1)\right|dz\\
&&\le \int  _{ \frac {1} {R} }^R \left|z^{s_+-1}u_0(z) (z^{iv}-1)\right|dz+2\int _R^\infty \left|z^{q_0-\delta-1}u_0(z)\right|dz+\\
&&\hskip 5.4cm +2\int _0^{\frac {1} {R}} \left|z^{p_0+\delta-1}u_0(z)\right|dz.
\end{eqnarray*}
Since $z^{q_0-\delta-1}u_0, \, z^{p_0+\delta-1}u_0\in L^1(\R^+)$,  we have $\int _R^\infty \left|z^{q_0-\delta-1}u_0(z)\right|dz\to 0$ and 
$\int _0^{\frac {1} {R}} \left|z^{p_0+\delta-1}u_0(z)\right|dz\to 0$ as $R\to \infty$.

On the other hand, for all $R>1>$ fixed,
\begin{eqnarray}
\int _{\frac {1} {R}}^R \left|z^{s_+-1}u_0(z) (z^{iv}-1)\right|dz=\int _{\frac {1} {R}}^R \left|z^{s_+-1}u_0(z) (e^{iv \log z}-1)\right|dz.
\end{eqnarray}
We use now that for $s\in \R$:
\begin{eqnarray*}
|e^{is}-1|&=&\left|\int _0^se^{ir}dr\right|= \left|s \int _0^1e^{i s \rho }d\rho \right|\le |s|\\
|e^{iv \log z}-1|&\le& |v \log z|\le  (\varepsilon  \log R),\,\,\,\forall z\in \left( \frac {1} {R}, R\right),
\end{eqnarray*}
and then
\begin{eqnarray*}
\int _{\frac {1} {R}}^R \left|z^{s_+-1}u_0(z) (z^{iv}-1)\right|dz&\le& (\varepsilon  \log R) \int _0^\infty \left|z^{s_+-1}u_0(z) \right|dz\\
&\le & (\varepsilon  \log R)\int _0^\infty \left|(z^{p_0+\delta-1}+z^{q_0-\delta-1})u_0(z) \right|dz.
\end{eqnarray*}
We deduce 
$$
|\omega (\varepsilon )|\le C(p_0+\delta,q_0-\delta)\left( \int _R^\infty \left|z^{q_0-\delta-1}u_0(z)\right|dz+\int _0^{\frac {1} {R}} \left|z^{p_0+\delta-1}u_0(z)\right|dz+\varepsilon  \log R\right)
$$
and (\ref{theorem3:omega}) follows choosing $R=R(\varepsilon )$ such that $R(\varepsilon )\to \infty$ and $\varepsilon \log R(\varepsilon )\to 0$  as $\varepsilon \to 0$.

We now estimate the second term of the right hand side in (\ref{wphi:split}). By Lemma~(\ref{statphase1}) and (\ref{intU0}):
\begin{eqnarray}
\left|\frac {1} {2\pi i}\int  _{\Rea (s)=s_+, |\Ima (s)|\ge \varepsilon }\hskip -1cm U_0(s)e^{\phi (s, t, x)}ds\right|&\le &
\frac {e^{\phi (s_+(t, x), t, x)-\gamma (\varepsilon )t}} {2\pi }
\int  _{\Rea (s)=s_+, |\Ima (s)|\ge \varepsilon }\hskip -1cm\left|U_0(s)\right|ds \nonumber \\
&\le &\frac {e^{\phi (s_+(t, x), t, x)-\gamma (\varepsilon )t}} {2\pi } C(p_0+\delta, q_0-\delta). \label{secondterm}
\end{eqnarray}
This ends the proof of (a).

We prove now the point (b).
To this end we split now the integral in (\ref{def:wphi}) as follows:
\begin{eqnarray}
&&w(t, x)  
 = \frac {1} {2\pi i}\sum  _{ k\in \Z }\int  _{ s_k(t, x) -i\varepsilon  }^{ s_k(t, x) +i\varepsilon  }U_0(s)e^{\phi (s, t, x)}ds+
\frac {1} {2\pi i}\int  _{\Gamma  _{ \varepsilon  }}U_0(s)e^{\phi (s, t, x)}ds \label{wphi:split2}\\
&&\Gamma  _{ \varepsilon  }=\left\{s_+ +iv;\,\left|v-v_k \right|\ge\varepsilon \,\,\,\forall k\in \Z\right\},\,\,v_k=kv_*,
\label{wphi:Gamma}
\end{eqnarray}
where $\varepsilon >0$ is small enough to have that the intervals $\left[s_k(t, x) -i\varepsilon , s_k(t, x) +i\varepsilon \right]$ are disjoints. We first estimate the integral over $\Gamma  _{ \varepsilon  }$ using Lemma~\ref{statphase3}:
\begin{eqnarray*}
\left|\int  _{\Gamma  _{ \varepsilon  }}U_0(s)e^{\phi (s, t, x)}ds\right| &\le &
\int  _{\Gamma  _{ \varepsilon  }}\left|U_0(s)\right|e^{\Re e\phi (s, t, x)}ds\\
&\le &e^{\phi (s_+, t, x)-\gamma t}\int  _{s_+-i\infty}^{s_+ +i\infty}\left|U_0(s)\right|ds,
\end{eqnarray*}
and we conclude as for the point~(a) above.

For 
 $v\in \left[v_k- \varepsilon ,  v_k+ \varepsilon\right]$ for some $k\in \Z$, we write $s_k=s_++iv_k$ and 

\begin{eqnarray*}
&&\frac {1} {2\pi i}\int  _{ s_k(t, x) -i\varepsilon  }^{ s_k(t, x) +i\varepsilon  }U_0(s)e^{\phi (s, t, x)}ds=
\frac {U_0(s_k)} {2\pi i}\int  _{ s_k(t, x) -i\varepsilon  }^{ s_k(t, x) +i\varepsilon  }e^{\phi (s, t, x)}ds+\\
&&\hskip 3.5cm +\frac {1} {2\pi i}\int  _{ s_k(t, x) -i\varepsilon  }^{ s_k(t, x) +i\varepsilon  }\left(U_0(s)-U_0(s_k)\right)e^{\phi (s, t, x)}ds\\
 && \hskip 6cm=I_1(k)+I_2(k).
\end{eqnarray*}
Using Lemma~(\ref{statphase2})  we may repeat the analysis of the terms $I_1$ and $I_2$ performed in the proof of the point (a) above to estimate $I_1(k)$ and $I_2(k)$ as follows:
\begin{eqnarray*}
I_1(k)&=&
\frac{U_0(s_k)}{\sqrt{2\pi }} \frac {e^{\phi (s_k(t, x), t, x)}} {\sqrt{t K''(s_k)}}
\left(1+\mathcal O \left( 
 \frac {e^{-\frac {t\left( \varepsilon^2K ''(q_0)\right)} {2}}} { \varepsilon \sqrt{t K ''(q_0)}}\right) \right),
\end{eqnarray*}

\begin{eqnarray}
|I_2(k)|&\le& \omega _k(\varepsilon )
 \frac {e^{\phi (s_k(t, x), t, x)}} {\sqrt{2\pi t K''(s_k)}}\left(1+\mathcal O \left( 
 \frac {e^{-\frac {t\left( \varepsilon^2K ''(q_0)\right)} {2}}} { \varepsilon \sqrt{t K ''(q_0)}}\right) \right),\\
 \omega _k(\varepsilon )&=&\sup _{ |v|\le \varepsilon  }\left|U_0(s_k+iv)-U_0(s_k)\right|.
\end{eqnarray}
To estimate the sum of $I_1(k),$  as we have seen in (\ref{Kegal})
$$
\phi (s_k, t, x)=-s_k\log x+tK(s_k)=\phi (s_+, t, x)-\frac {2i\pi k } {\log \theta}\log x.
$$
We deduce:
\begin{eqnarray*}
e^{\phi (s_k(t, x), t, x)}&=&e^{\phi (s_+(t, x), t, x)}e^{-\frac {2i\pi k } {\log \theta }\log x},
\end{eqnarray*}
and therefore
\begin{eqnarray}
&&\sum _{ k\in \Z }I_1(k)=\left(\frac {1} {\sqrt{2\pi }}+\mathcal O \left( 
 \frac {e^{-\frac {t\left( \varepsilon^2K ''(p_0)\right)} {2}}} { \varepsilon \sqrt{t K ''(p_0)}}\right)  \right)\frac {e^{\phi (s_+(t, x), t, x)}} {\sqrt{t K''(s_+)}} \times \nonumber \\
 &&\hskip 6.4cm \times \sum _{ k\in \Z }U_0(s_k)
 e^{-\frac {2i\pi k } {\log \theta }\log x}\label{sumI1}\\
 &&\sum _{ k\in \Z }|I_2(k)|\le \left(1+\mathcal O \left( 
 \frac {e^{-\frac { \varepsilon^2 t K ''(p_0)} {2}}} { \varepsilon \sqrt{t K ''(p_0)}}\right) \right)\frac {e^{\phi (s_+(t, x), t, x)}} {\sqrt{2\pi t K''(s_+)}}
  \sum _{ k\in \Z }|\omega _k(\varepsilon )|. \label{sumI2}
\end{eqnarray}
We define $y=\frac { \log x} {\log \theta }$, (or $x=\theta ^{y}$),  and first consider the series:
$
\sum _{ k\in \Z }U_0(s_k)e^{-2i\pi k y},
$
where
\begin{eqnarray*}
U_0(s_k)&=&\int _0^\infty u_0(x)x^{s_k-1}dx=\int _0^\infty u_0(x)e^{(s_+-1+\frac {2i\pi k} {\log \theta })\log x}dx\\
&=&\int  _{ \R }u_0\left( \theta ^y\right) \theta ^{(s_+-1)y}e^{2i \pi ky} \theta ^y \log \theta dy=
\int  _{ \R }u_0\left( \theta ^y\right) \theta ^{s_+y}e^{2i \pi ky} \log \theta dy.
\end{eqnarray*}
Since the function $g(y)=u_0\left( \theta ^y\right) \theta ^{s_+y}$ is real valued, $U_0(s_{-k})=\overline{U_0(s_k)}$, and then
$U_0(s_{-k})e^{2i\pi k y}=\overline{U_0(s_k)e^{-2i\pi k y}}$. We first  deduce, using (\ref{intU0bis}), that the series is absolutely convergent.  We may then re arrange the series to obtain:
\begin{eqnarray*}
\sum _{ k\in \Z }U_0(s_k)e^{-2i\pi k y}&=&U_0(s_0)+\sum _{ k\in \N^* }\left(U_0(s_k)e^{-2i\pi k y}+\overline{U_0(s_k)e^{-2i\pi k y}}\right)\\
&=&U_0(s_+)+2\sum _{ k\in \N^* } \Re e\left( U_0(s_k)e^{-2i\pi k y}\right),
\end{eqnarray*}
where we see that the series is real valued.

Let us now turn to the sum of $I_2(k).$ We have
\begin{eqnarray*}
\sum _{ k\in \Z }\sup _{ |v|\le \varepsilon  }\left|U_0(s_k+iv)-U_0(s_k)\right|
&\le& \sum _{ k=-L}^L\sup _{ |v|\le \varepsilon  }\left|U_0(s_k+iv)-U_0(s_k)\right|+\\
&& +\sum _{ k\in \Z, |k|>L }\sup _{ |v|\le \varepsilon  }\left(\left|U_0(s_k+iv)\right|+\left|U_0(s_k)\right|\right)\\
&\le & \sum _{ k=-L}^L\sup _{ |v|\le \varepsilon  }\left|U_0(s_k+iv)-U_0(s_k)\right|+\\
&& +2C_1(p_0+\delta, q_0-\delta)\!\!\!\!\sum _{ k\in \Z, |k|>L }\frac {1} {|v_k|^2}.
\end{eqnarray*}
Since by definition $
|v_k|=\left|\frac {2\pi k} {\log \theta}\right|,$
$$
\lim _{ L\to \infty }2C_1(p_0+\delta, q_0-\delta)\!\!\!\!\sum _{ k\in \Z, |k|>L }\frac {1} {|v_k|^2}=0.
$$
Moreover, for all $L>0$ fixed,  all $k\in \Z$, $|k|\le L$ and $R>1$:
\begin{eqnarray*}
&&U_0(s_k+iv)-U_0(s_k)=\int _0^\infty z^{s_++iv_k-1} u_0(z) (z^{iv}-1)dz\\
&&|U_0(s_k+iv)-U_0(s_k)|\le \int _{\frac {1} {R}}^R \left|z^{s_+-1} z^{iv_k}u_0(z)(z^{iv}-1)\right|dz+\\
&&\hskip 4 cm +2\int _R^\infty \left|z^{q_0-\delta-1}u_0(z)\right|dz+2\int _0^{\frac {1} {R}} \left|z^{p_0+\delta-1}u_0(z)\right|dz.
\end{eqnarray*}
Arguing as above, 
$$
\lim _{ R\to \infty }\left(\int _R^\infty \left|z^{q_0-\delta-1}u_0(z)\right|dz+\int _0^{\frac {1} {R}} \left|z^{p_0+\delta-1}u_0(z)\right|dz\right)=0.
$$
For all $R>1$ fixed,
\begin{eqnarray}
\int _{\frac {1} {R}}^R \left|z^{s_+-1} z^{iv_k}u_0(z)(z^{iv}-1)\right|dz\le (\varepsilon  \log R) \int _0^\infty \left|(z^{p_0+\delta-1}+z^{q_0-\delta-1})u_0(z) \right|dz.
\end{eqnarray}
We finally have, for all $L>0$, $R\to \infty$ and $\varepsilon \to 0$ such that $t\varepsilon ^2\to \infty$:
\begin{eqnarray*}
&&\sum _{ k\in \Z }|I_2(k)|\le  \left(1+\mathcal O \left( 
 \frac {e^{-\frac { \varepsilon^2 t K ''(q_0)} {2}}} { \varepsilon \sqrt{t K ''(q_0)}}\right) \right)\frac { e^{\phi (s_+(t, x), t, x)}} {\sqrt{2\pi t K''(s_+)}}
S(L, R, \varepsilon ),\\
&&S(L, R, \varepsilon )\le 2C_1(p_0, q_0)\!\!\!\!\sum _{ k\in \Z, |k|>L }\frac {1} {|v_k|^2}+\\
&&\hskip 2cm+ 2(\varepsilon  \log R) L \int _0^\infty \left|(z^{p_0+\delta-1}+z^{q_0-\delta-1})u_0(z) \right|dz\\
&&\hskip 3cm+4L\left(\int _R^\infty \left|z^{q_0-\delta-1}u_0(z)\right|dz+\int _0^{\frac {1} {R}} \left|z^{p_0+\delta-1}u_0(z)\right|dz\right).
\end{eqnarray*}
By Lemma \ref{statphase3} and (\ref{intU0}) we may estimate now the second integral in the right hand side of (\ref{wphi:split2}) as follows:
\begin{eqnarray*}
\left|\frac {1} {2\pi i}\int  _{\Gamma  _{ \varepsilon  }} U_0(s)e^{\phi (s, t, x)}ds\right|&\le &
\frac {e^{\phi (s_+(t, x), t, x)-\gamma (\varepsilon )t}} {2\pi }\int  _{\Rea (s)=s_+, |\Ima (s)|\ge \varepsilon }\hskip -0.5cm\left|U_0(s)\right|ds \nonumber \\
&\le &\frac {e^{\phi (s_+(t, x), t, x)-\gamma (\varepsilon )t}} {2\pi } C(p_0+\delta, q_0-\delta).
\end{eqnarray*}
This proves the point (b). 
\end{proof}
\subsection{Proof of Proposition~\ref{proposition1}}
\label{subsec:prop}
\begin{proof}[Proposition~\ref{proposition1}.] 

Consider for example an initial data $u_0$ whose support lies in $(2, 3)$. For such initial data $p_0$ may be taken arbitrarily large and  $q_0$ in such a way that for all $t>0$ and $x>0$, we have $s_+(t, x)\in (p_0, q_0)$ and we may then apply Theorem~\ref{theorem3} to the solution $u$. Let then 
 $\varepsilon (t)$  be given by Theorem~\ref{theorem3}, satisfying $\varepsilon (t) \sqrt t\to \infty$

If $u$ is solution of (\ref{eq:frag})(\ref{eq:fragdata}), the function $\omega =e^tu$ is written in (\ref{wphi:split}) as the sum of two terms. The second term in the right-hand side of (\ref{wphi:split}) is proved to be like   
$e^{\Phi (s_+(t, s), t, x)}\left(1+\mathcal O\left( e^{-\gamma  _{ \delta  }(\varepsilon (t)}\right)\right)$ (cf. (\ref{secondterm})) as $t\to \infty$ and $s_+(t, x)\in (p_0+\delta , q_0-\delta )$.
The first term in the right-hand side of (\ref{wphi:split}) too is decomposed in a sum of two integrals denoted  $I_1+I_2$  given in (\ref{wphi:splitsplit}). It is straightforward to see that the error term in the estimate of $I_1(t, x)$ given in (\ref{est:I1}) is exponentially decaying as $t\to \infty$,  uniformlly for all $x$ such that $s_+(t, x)\in (p_0, q_0)$ since $\varepsilon ^2(t)t\to \infty$ as $t\to \infty$. We are then left with the term $I_2$ that we rewrite as follows:
\begin{eqnarray*}
I_2
=\frac {1} {2\pi i}\int  _{ -\varepsilon  }^{\varepsilon  }\left(U_0(s_++iv)-U_0(s_+)\right)e^{\phi (s_++iv, t, x)}dv
\end{eqnarray*}
where
\begin{eqnarray*}
&&\Phi (s, t, x)=-s\log x+tK(s)=-s\log x+t\int _0^1k_0(y)y^{s-1}dx\\
&&\Phi (s_++iv, t, x)=-(s_++iv)\log x+t\int _0^1k_0(y)y^{s_+-1}e^{iv\log y}dy\\
&&\Phi (s_++iv, t, x)=-s_+\log x+t\int _0^1k_0(y)y^{s_+-1}\cos(v\log y)dy-\\
&&-iv\log x+it\int _0^1k_0(y)y^{s_+-1}\sin(v\log y)dy\\
&&\Phi (s_+, t, x)=-s_+\log x+t\int _0^1k_0(y)y^{s_+-1}dy.
\end{eqnarray*}
Then
\begin{eqnarray*}
&&\Phi (s_++iv, t, x)=\Phi (s_+, t, x)+t\int _0^1k_0(y)y^{s_+-1}(\cos(v\log y)-1)dy+\\
&&+i\left(-v\log x+t\int _0^1k_0(y)y^{s_+-1}\sin(v\log y)dy\right)
\end{eqnarray*}
and
\begin{eqnarray*}
e^{\Phi (s_++iv)}&=&e^{\Phi (s_+)} \left(\cos\left(-v\log x+t\int _0^1k_0(y)x^{s_+-1}\sin(v\log y)dy\right)\right.+\\
&&\hskip 1cm \left. +i\sin\left(-v\log x+t\int _0^1k_0(y)y^{s_+-1}\sin(v\log y)dy\right)\right).
\end{eqnarray*}
If we write:
\begin{eqnarray*}
&&\left(U_0(s_++iv)-U_0(s_+)\right)e^{\phi (s_++iv, t, x)}=e^{\Phi (s_+)}(A+iB)\\
\end{eqnarray*}
with $A\in \R$ and $B\in \R$, then
\begin{eqnarray*}
&&\left|\int  _{ -\varepsilon  }^\varepsilon \left(U_0(s_++iv)-U_0(s_+)\right)e^{\phi (s_++iv, t, x)}dv\right|=\\
&& \left|\int  _{ -\varepsilon  }^\varepsilon e^{\Phi (s_+)} Adv+i\int  _{ \varepsilon  }^\varepsilon e^{\Phi (s_+)}Bdv \right|
\ge e^{\Phi (s_+)}\left|\int  _{ -\varepsilon  }^\varepsilon  Adv\right| .
\end{eqnarray*}
Since
\begin{eqnarray*}
U_0(s_++iv)-U_0(s_+)& =&\int _{2}^3 u_0(y) y^{s_+-1}(\cos (v\log y)-1)dy+\\
&&\hskip 2cm +
i\int _{2}^3 u_0(y) y^{s_+-1}\sin (v\log y)dy,
\end{eqnarray*}
the function $A$ is
\begin{eqnarray*}
&&A(v, t, x)=\int _{2}^3 u_0(y) y^{s_+-1}(\cos (v\log y)-1)dy\times \\
&&\hskip 3cm \times \cos\left(-v\log x+t\int _0^1k_0(y)y^{s_+-1}\sin(v\log y)dy\right)\\
&&-\int _{2}^3 u_0(y) y^{s_+-1}\sin (v\log y)dy\,\sin\!\left(-v\log x+t\!\int _0^1\!k_0(y)y^{s_+-1}\sin(v\log y)dy\right).
\end{eqnarray*}
If $|v|\le \varepsilon $ and $\varepsilon \to 0$,  we may approximate the different terms in $A$, first those depending on the initial data $u_0$,
\begin{eqnarray*}
&&\int _{2}^3 u_0(y) y^{s_+-1}(\cos (v\log y)-1)dy\approx -\frac {v^2} {2}\int _{2}^3 u_0(y) y^{s_+-1}(\log y)^2dy
=-\frac {v^2} {2}\kappa _1\\
&&\int _{2}^3 u_0(y) y^{s_+-1}\sin (v\log y)dy\approx  \int _{2}^3 u_0(y) y^{s_+-1}\left((v\log y)-\frac {(v\log y)^3} {6}\right)dy\\
&&\hskip 4.4cm \approx v\kappa _2,
\end{eqnarray*}
and those depending on the kernel $k_0$
$$\int _0^1k_0(y)y^{s_+-1}(\cos(v\log y)-1)dy\approx -\frac {v^2} {2}\int _0^1k_0(y)y^{s_+-1}(\log y)^2dy= -v^2\kappa _3,$$
\begin{eqnarray*}
\int _0^1k_0(y)y^{s_+-1}\sin(v\log y)dy&=&\int _0^1k_0(y)y^{s_+-1}\left(v\log y-\frac {v^3\log ^3 x} {6}\right)dy\\
&=&v\kappa _4-v^3\kappa _5.
\end{eqnarray*}
Notice that: 
$$\kappa _1>0,\,\,\kappa _2>0,\,\,\,\kappa _3=K''(s_+)>0,\,\,\, \kappa _4=K'(s_+)<0,\,\,\,\kappa _5=K'''(s_+)<0.$$
We consider now a curve $x=x(t)$ as follows
$$
\frac {-\log x} {t}=c
$$
for a constant $c>0$ to be chosen. Along this curve the following holds:
\begin{eqnarray*}
s_+(t, x)=(K')^{-1}\left(\frac {\log x} {t} \right)=(K')^{-1}\left(-c\right)\\
\Phi (s_+(t, x))=-s_+(t, x)\log x+tK(s_+)\\
= t\left(c(K')^{-1}(-c)+K\left((K')^{-1}(-c) \right)\right).
\end{eqnarray*}
We chose then  $c$  (it is not difficult to find kernels $k_0$ for which this is possible) such that:
$$
c(K')^{-1}(-c)+K\left((K')^{-1}(-c) \right)=0,
$$
then, along the curve $-\log x=ct$,
$$
\Phi (s_+(t, x))=0.
$$
Moreover, along that same curve,
\begin{eqnarray*}
-v\log x+\int _0^1 k_0(y)y^{s_+-1}\sin(v\log y)dy&\approx &-v\log x +v \int _0^1k_0(y)y^{s_+-1}(\log y)dy\\
&=&ctv + v\kappa _4t-v^3\kappa _5t=-v^3\kappa _5t.
\end{eqnarray*}
Then, if we chose $\varepsilon(t) $ such that $t\, \varepsilon^3 (t)\to 0$ as $t\to \infty$:

\begin{eqnarray*}
&&\cos\left(-v\log x+t\int _0^1k_0(y)y^{s_+-1}\sin(v\log y)dy\right)\approx \cos(-v^3\kappa _5t)\approx1\\
&&\sin\left(-v\log x+t\int _0^1k_0(y)y^{s_+-1}\sin(v\log y)dy\right)\approx \sin(-v^3\kappa _5t)\approx -v^3\kappa _5t\\
&&A\approx -\frac {v^2} {2}\kappa _1+\kappa _2\kappa _5v^4 t.
\end{eqnarray*}

\begin{eqnarray*}
\int  _{ -\varepsilon  }^\varepsilon e^{\Phi (s_+)} Adv\approx -\frac {\kappa _1} {2}\int  _{ -\varepsilon  }^\varepsilon v^2 dv+\kappa _2\kappa _5 t\int  _{ -\varepsilon  }^\varepsilon v^4 dv\\
=-\frac {\kappa _1} {2}\frac {\varepsilon ^3(t)} {3}+\kappa _2\kappa _5 t\frac {\varepsilon ^5(t)} {5}.
\end{eqnarray*}
This yields 
$$
|I_2(t, x)|\ge \frac {\kappa _1} {2}\frac {\varepsilon ^3(t)} {3}-\kappa _2\kappa _5 t\frac {\varepsilon ^5(t)} {5}
$$
and the proposition follows.
\end{proof}

\subsection{Proof of Corollary~\ref{cor:process}}
\label{subsec:process} 
In this section we  prove Corollary~\ref{cor:process}.  As indicated in the introduction, this result may essentially be obtained from Theorem~1 of \cite{MR2017852},  in terms of random measures, using probabilistic methods for fragmentation processes. 
\\
Let us recall that fragmentation equations may be seen as deterministic linear rate equations that describe the mass distribution of the particles involved in a fragmentation phenomenon when such phenomenon is described by a ``fragmentation'' stochastic process (see \cite{MR2253162}).  The homogeneous self-similar fragmentation process, whose associated deterministic rate equation for the mass distribution of particles is the fragmentation equation~(\ref{eq:Ffrag})(\ref{def:probak})  with $\gamma =0$, has been studied in  \cite{MR2017852}. 
\\
The aim is to describe a system of particles that split independently of each
other to give smaller particles and each obtained particle splits in turn, independent of
the past and of other particles etc.  To this end a stochastic process, $X=(X(t), t\ge0)$, is introduced that takes values in the state space denoted as  ${\mathcal{S}^\downarrow}(y)$, the set of all
sequences $Y= (y_i)_{i\in\N^*}$ such that the following holds:
\begin{equation*}
\label{condY}
  y_1 \ge ... \ge y_i \ge y_{i+1} \ge ... \ge 0
\quad\hbox{and}\quad
y = \sum_{i=1}^\infty y_i\le 1.
\end{equation*}
Under suitable assumptions on the splitting measure (giving the rate at which a particle
with mass one splits)  it is then proved in Theorem  1 (i) of \cite{MR2017852} that the  random measure $\rho_t(dy)$ defined by
$$\rho_t(dy)=\sum\limits_{i=1}^\infty X_i(t) \delta_{\f{1}{t} \log X_i(t)} (dy)$$
converges to $\delta  _{ -\mu  }$ in probability  for some $\mu <\infty$.  Moreover,  if 
$\tilde\rho_t$ is the random measure defined as the image of $\rho_t$ by the map $x\to \sqrt{t} (x+\mu)/\sigma$,  it is proved in Theorem 1 (ii) that $\tilde\rho_t$ converges in probability to the standard normal distribution ${\mathcal  N}(0,1)$. We give now the proof of  the corresponding result in terms of the density function $u$ using  Theorems~\ref{theorem1}, ~\ref{theorem2} and~\ref{theorem3}.
\\

\begin{proof}[Corollary~\ref{cor:process}.] We only write the proof of the convergence for $r(t, y)$ since the convergence of $\tilde r(t, y)$ follows from the same arguments. For any continuous and bounded  test function $\phi (y)$ with $y\in \R$:
$$\int\limits_{-\infty}^{+\infty} \phi(y)r(t,y)dy=\int\limits_{-\infty}^{K'(p_0)} +\int\limits_{K'(p_0)}^{K'(p_0+\delta)} 
+\int\limits_{K'(p_0+\delta)}^{K'(q_0-\delta)} 
+\int\limits_{K'(q_0-\delta)}^{K'(q_0)} +
\int\limits_{K'(q_0)}^{+\infty}  \phi(y)r(t,y)dy.$$
 For the first and the last integrals, are estimated using  Theorem~\ref{theorem2} to prove that they vanish.  Consider for  example the first. Using property (i) of Theorem~\ref{theorem2} we deduce that there exists $T>0$ such that, for all $t\ge T$:
 \begin{eqnarray}
&& \int\limits_{-\infty}^{K'(p_0)}  \phi(y)r(t,y)dy=\int _0^{e^{K'(p_0)t}}xu(t, x)\phi\left(\f{\log(x)}{t}\right)dx\nonumber\\
&&\qquad \le 2 ||\phi|| _{ \infty }|b_0|e^{K(p_0)t-t}\int _0^{e^{K'(p_0)t}}x^{1-p_0}dx
 =\frac{2 ||\phi|| _{ \infty }|b_0|}{2-p_0}e^{H(p_0)t},\label{S4E100}
 \end{eqnarray}
 where
 $$
 H(\nu)=K(\nu)+(2-\nu)K'(\nu)-1.
 $$
By the hypothesis (\ref{def:probak}), $H(2)=0$.  Moreover, since $H'(\nu)=(2-\nu) K''(\nu)$ and $K''(\nu)\ge 0$ we deduce that $H$ has a maximum at $\nu=2$ and therefore $H(\nu)<0$ for all $\nu\not =2$. In particular, since $p_0<1$: $H(p_0)<0$. The same argument gives:
 \begin{eqnarray}
 \int\limits_{K'(q_0)}^\infty  \phi(y)r(t,y)dy\le \frac{2 ||\phi|| _{ \infty }|a_0|}{q_0-2}e^{H(q_0)t}.\label{S4E101}
 \end{eqnarray}
 
For the third integral, we use Theorem~\ref{theorem3}-a), which gives, for $t\to  +\infty$, $\varepsilon \to 0$,  $t \varepsilon ^2\to \infty$:
\begin{eqnarray}
 \int\limits_{K'(p_0+\delta)}^{K'(q_0-\delta)}  \phi(y)r(t,y)dy=t \,\Theta_1(t, \varepsilon )\int\limits_{K'(p_0+\delta)}^{K'(q_0-\delta)}\phi(y) e^{\Psi(y) t} \f{U_0\big(K'^{-1}(y)\big) dy}{\sqrt{2\pi tK''\big(K'^{-1}(y)\big)}}\label{S4E102}
\end{eqnarray}
with $\,\Theta_1(t, \varepsilon )$ defined in \eqref{theorem3:2} and 
$$\Psi(y)=2y-yK'^{-1}(y)+K\big(K'^{-1}(y)\big)-1.$$
It is easy to check that $\Psi $ has a unique maximum at $y_0=K'(2)$ with $\Psi (y_0)=0$ and $\Psi'' (y)=-\f{1}{K''\big(K'^{-1}(y)\big)}.$
We may then  apply  Laplace's method in the right hand side of \ref{S4E102} to obtain:
 
\begin{equation}
\lim _{ t\to \infty }\int\limits_{K'(p_0+\delta)}^{K'(q_0-\delta)}  \phi(y)r(t,y)dy
=U_0(2) \phi(K'(2))\label{S4E104}
\end{equation}
with $U_0(2)=\int x u_0(x)dx$ the initial mass. 

To bound the second integral (the fourth is similar), we go back to the expression in $x$:
\begin{eqnarray*}
\int\limits_{K'(p_0)}^{K'(p_0+\delta)} \phi(y) r(t,y)dy=\int\limits_{e^{K'(p_0)t}}^{e^{K'(p_0+\delta)t}} xu(t,x)\phi\left(\f{\log(x)}{t}\right)dx\\
\leq \Vert \phi\Vert_{L^\infty(\R)} \int\limits_0^{e^{K'(p_0+\delta)t}} xu(t,x)dx.
\end{eqnarray*}
We now use  Formula~\eqref{def:invmellin} to write for $\nu \in (p_0,q_0)$
$$\left\vert u(t, x) \right\vert =\left| \frac {1} {2\pi i}\int \limits _{ \nu-i\infty }^{ \nu+i\infty }U_0(s)\,e^{\phi(s,t,x)-t}ds\right| \leq \f{1}{2\pi}  e^{{\Rea(\phi(\nu,t,x))-t}}\int\limits _{ \nu-i\infty }^{ \nu+i\infty } \vert U_0(s) \vert ds.$$
We have by definition $\phi(\nu,t,x)=-\nu \log(x) + t K(\nu).$ We thus have for $\nu\in (p_0, q_0)$
\begin{eqnarray*}
\int\limits_0^{e^{K'(p_0+\delta)t}} xu(t,x)dx \leq C e^{tK(\nu)-t} \int\limits_0^{e^{K'(p_0+\delta)t}} x e^{-\nu \log(x)}dx
=C e^{tK(\nu)-t} e^{(2-\nu)K'(p_0+\delta)t}. 
\end{eqnarray*}
Consider now the function
$$
F(\nu)=K(\nu)+(2-\nu)K'(p_0+\delta )-1.
$$
By the hypothesis~(\ref{def:probak}), $F(2)=0$. Moreover:
$$
F'(\nu)=K'(\nu)-K'(p_0+\delta )=\int _0^1k_0(x)\log x\left(x^{\nu-1}-x^{p_0+\delta-1} \right)dx
$$
and then, if $\delta >0$ is such that $p_0+\delta-1<1$:
$$
F'(2)=\int _0^1k_0(x)\log x\left(x-x^{p_0+\delta-1} \right)dx>0.
$$
There exists therefore  $\nu_1\in (p_0, 2)$ such that $F(\nu_1)<0$ and 
\begin{equation}
\int\limits_0^{e^{K'(p_0+\delta)t}} xu(t,x)dx \leq C e^{F(\nu_1)t}\to 0,\,\,\,\hbox{as}\,\,t\to \infty.\label{S4E1}
\end{equation}
In order to bound the fourth integral we write:
\begin{eqnarray*}
\int\limits_{K'(q_0-\delta)}^{K'(q_0)}  \phi(y)r(t,y)dy=\int\limits_{e^{K'(q_0-\delta )t}}^{e^{K'(q_0)t}} xu(t,x)\phi\left(\f{\log(x)}{t}\right)dx\\
\le \Vert \phi\Vert_{L^\infty(\R)} \int\limits_{e^{K'(q_0-\delta )t}}^{e^{K'(q_0)t}} xu(t,x)dx.
\end{eqnarray*}
Using again Formula~(\ref{def:invmellin}) we deduce,  for $\nu \in (2,q_0)$:
\begin{eqnarray*}
\int\limits_{e^{K'(q_0-\delta )t}}^{e^{K'(q_0)t}} xu(t,x)dx &\leq& C e^{tK(\nu)-t} \int\limits_{e^{K'(q_0-\delta )t}}^{e^{K'(q_0)t}} x e^{-\nu \log(x)}dx \nonumber\\
&=&\frac {C} {\nu-2} e^{tK(\nu)-t} \left(e^{(2-\nu)K'(q_0 -\delta )t} - e^{(2-\nu)K'(q_0)t}\right) \\
&\leq &\frac {C} {\nu-2} e^{tK(\nu)-t} e^{(2-\nu)K'(q_0 -\delta)t}.
\end{eqnarray*}
We now consider the function
$$
G(\nu)=K(\nu)+(2-\nu)K'(q_0-\delta)-1.
$$
By the hypothesis (\ref{def:probak}), $G(2)=0$ and since $K'$ is increasing,
$
G'(\nu)=K'(\nu)-K'(q_0 -\delta )
$ is positive for $\nu >q_0-\delta$ and negative for $\nu < q_-\delta,$ so if $q_0 -\delta >2$ we have $G(\nu)<0$  for any $\nu \in (2,q_0-\delta).$ Taking simply $\nu=q_0-\delta$ we have
\begin{eqnarray}
\int\limits_{e^{K'(q_0-\delta )t}}^{e^{K'(q_0)t}} xu(t,x)dx
\le \frac {C} {q_0-\delta-2} e^{tK(q_0-\delta)-t}e^{(2-q_0+\delta)K'(q_0-\delta)t}=\frac {C e^{G(q_0-\delta)t}} {q_0-\delta-2} \label{S4E2},
\end{eqnarray}
and the right-hand side vanishes exponentially when $t\to\infty.$
\end{proof}

\begin{remark}
Notice that (\ref{S4E100}), (\ref{S4E101}), (\ref{S4E1}) and (\ref{S4E2}) give rates of convergence to zero as $t\to \infty$. A careful application of  Laplace's method to the third integral would also give a rate of convergence in the limit (\ref{S4E104}) under suitable assumptions on the function
$$
h _{ \phi  }(y)=\phi(y) \f{U_0\big(K'^{-1}(y)\big) dy}{\sqrt{2\pi K''\big(K'^{-1}(y)\big)}}
$$
in a neighborhood of $y_0$. For example, if for some $\alpha >0$: 
$$
h _{ \phi  }(y)=h _{ \phi  }(y_0)+\mathcal O((y-y_0)^\alpha)\,\,\hbox{as}\,\,\,\, y\to y_0,
$$
then
$$
\int\limits_{K'(p_0+\delta)}^{K'(q_0-\delta)}  \phi(y)r(t,y)dy=U_0(2) \phi(K'(2))+\mathcal O (t^{-\frac {\alpha+1 } {2}}+t^{-1})\,\,\,\hbox{as}\,\,t \to \infty.
$$
\end{remark}
\newpage
\section{Further remarks on the asymptotic results}
\label{sec:asymptot}
\subsection{Study of the different regions of asymptotic behaviour.}
It is proved in Theorem \ref{thm: toDirac} that if $u$ is a solution of the fragmentation equation~(\ref{eq:frag}) with suitable integrability properties, $xu(t, x)$ converges to a Dirac mass at the origin as $t\to +\infty$. The formation of that Dirac mass may be followed using the descriptions of the long-time behaviour of $u$ in Theorem \ref{theorem1} and Theorem \ref{theorem2}. As already said,  Theorem \ref{theorem1} implies a uniform time  exponential decay  for $x>1$. 
 
In this section we give some indications about the domain  where, following  Theorem \ref{theorem2},  the function $xu(t, x)$ concentrates towards a Dirac measure, as $t$ increases.

All the cases of Theorem~\ref{theorem2} may be described, roughly speaking, as giving  a formula for $xu(t,x)$ of the form
\begin{equation}\label{eq:asympto}
x u(t,x) \approx A(t,\sigma) x^{-\sigma+1} e^{(K (\sigma)-1)t},
\end{equation}
with $A=a_0$ and $\sigma=q_0$ in Theorem~\ref{theorem1} and Theorem~\ref{theorem2} part $(ii),$ $A=b_0$ and $\sigma=p_0$ in Theorem~\ref{theorem2} $(i),$ and $\sigma=s_+(t,x)$, $A(t,s_+)=\frac{U_0 (s_+)}{\sqrt{2\pi t K '' (s_+)}}$ in Theorem~\ref{theorem2} part $(iii).$

Since $A$ is either a constant or  a function bounded by a power law of $t$,  the long-time behaviour is essentially given by the (exponential) behaviour of the term $x^{-\sigma+1} e^{(K (\sigma)-1)t}.$

In order to understand the behaviour of the function $xu(t, x)$ as $t$ increases we may  consider the curves $\mathcal C_r$ in the plane $(x, t)$ defined by $s_+(t, x)=r $ for any constant $r \in \R$. The curve $\mathcal C_r$ also correspond to the set of the points $(x, t)$ where $x=e^{K'(r)t}$. Since moreover, the function $K'$ is negative and increasing, it is then easy to understand the behaviour of the function $x^{-\sigma+1} e^{(K (\sigma)-1)t}$ along such curves.

Let us begin by the case of Theorem~\ref{theorem3} when $ p_0< s_+ <q_0$: we may rewrite~\eqref{eq:asympto} as 
$$x u(t,x)\approx A(t,s_+) e^{t F(s_+(t,x))}$$
where we define $F$ by
\begin{equation}\label{def:F}
F(s)={ K}( s)-1-(s-1) K'\left(s\right).
\end{equation}
Let us now turn to the cases $s_+>q_0$ and $s_+<p_0.$ Equation~\eqref{eq:asympto} becomes 
$$x u(t,x) \approx A(p) e^{t G(p,s_+)},$$
with $p=p_0$ or $p=q_0$ and $G$ defined by
\begin{equation}\label{def:G}
G(p,s)=K (p) - 1 - (p-1) K ' (s).
\end{equation}
The behaviour of the signs of the functions $G$ and $F$, which determine the exponential growth or decay of $xu(t,x)$ along the lines $x=e^{K'(s_+)t},$ is given in the following lemma. Its proof is given in the appendix (Lemma~\ref{lem:FG2}).
\begin{lemma}\label{lem:FG}
The function $F(p)$ defined by~\eqref{def:F} has two zeros $\bar p \in (p_1,1)$ and $\bar q \in (2,\infty)$. It is negative on $(p_1,\bar p)\cap (\bar q,\infty)$ and positive on $(\bar p,\bar q).$

If $p_0 <\bar p,$ the function $G(p_0,s)$ defined by~\eqref{def:G} is negative for $s\in (p_1, p_0).$ If $\bar p < p_0,$ the function $G(p_0,\cdot)$ has a unique zero $\bar s (p_0) \in (p_1,p_0),$ so that $G(p,s)$ is negative for $s<\bar s(p_0)$ and positive for $ \bar s(p_0)<s<p_0.$

Similarly:   If $q_0 >\bar q,$ the function $G(q_0,s)$ is negative for $s\in (q_0, \infty).$ If $\bar q > q_0,$ the function $G(q_0,\cdot)$ has a unique zero $\bar s (q_0) \in (q_0,\infty),$ so that $G(q,s)$ is negative for $s>\bar s(q_0)$ and positive for $q_0<s< \bar s(q_0).$

\end{lemma}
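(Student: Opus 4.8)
Lemma~\ref{lem:FG} reduces to a one–variable study of $F$ and, for $p\in\{p_0,q_0\}$ fixed, of $s\mapsto G(p,s)$, using only the elementary properties of $K$ recalled earlier: $K$ is continuous, strictly decreasing and strictly convex on $[p_1,\infty)$, $K(2)=1$, $K(1)>1$, $K'<0$, $K''>0$, and (from Lemma~\ref{lem:phi}) $K'(s)\to-\infty$ as $s\to p_1^+$ while $K'(s)\to0^-$ as $s\to\infty$. The plan is first to pin down the shape and boundary values of $F$, then to deduce those of $G$ from the identity $G(p,p)=F(p)$ together with the monotonicity of $G(p,\cdot)$.

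For $F$, I would compute $F'(s)=K'(s)-K'(s)-(s-1)K''(s)=-(s-1)K''(s)$, so $F$ is strictly increasing on $(p_1,1)$ and strictly decreasing on $(1,\infty)$, with strict maximum $F(1)=K(1)-1>0$. Then the four boundary evaluations: $F(2)=K(2)-1-K'(2)=-K'(2)>0$; as $s\to\infty$, $K(s)\to0$ and $(s-1)K'(s)\to0$ — the latter because $(s-1)z^{s-1}|\log z|=u e^{-u}\le e^{-1}$ with $u=(s-1)|\log z|$, uniformly in $z\in(0,1)$, so dominated convergence applies against the finite measure $k_0$ — hence $F(s)\to-1$; and as $s\to p_1^+$, convexity of $K$ gives, for any fixed $a\in(p_1,1)$, $K(s)\le K(a)-K'(s)(a-s)=K(a)+|K'(s)|(a-s)$, so that $F(s)=K(s)-1-(s-1)K'(s)\le K(a)-1+(a-1)|K'(s)|\to-\infty$ since $a-1<0$ and $|K'(s)|\to\infty$. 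Combining with monotonicity: $F$ has exactly one zero $\bar p\in(p_1,1)$, and since $F(2)>0$ while $F$ decreases to $-1$ on $(1,\infty)$, exactly one zero $\bar q\in(2,\infty)$; consequently $F<0$ on $(p_1,\bar p)\cup(\bar q,\infty)$ and $F>0$ on $(\bar p,\bar q)$. (The symbol $\cap$ in the displayed statement is a typo for $\cup$.)

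For $G$, fix $p=p_0$; the relevant case is $p_0>p_1$ (otherwise $(p_1,p_0)=\emptyset$ and the claim is vacuous), so $K(p_0),K'(p_0)$ are finite and $G(p_0,p_0)=F(p_0)$. Since $\partial_s G(p_0,s)=-(p_0-1)K''(s)=(1-p_0)K''(s)>0$ (using $p_0<1$), $G(p_0,\cdot)$ is strictly increasing, and $G(p_0,s)=K(p_0)-1+(1-p_0)K'(s)\to-\infty$ as $s\to p_1^+$. If $p_0<\bar p$ then $F(p_0)<0$, so $G(p_0,s)\le G(p_0,p_0)=F(p_0)<0$ for all $s<p_0$; if $\bar p<p_0$ then $F(p_0)>0$ (since $p_0<1<\bar q$), so $G(p_0,\cdot)$ runs strictly increasingly from $-\infty$ to $F(p_0)>0$ and has a unique zero $\bar s(p_0)\in(p_1,p_0)$, negative to its left and positive on $(\bar s(p_0),p_0)$. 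The case $p=q_0>2$ is symmetric: $\partial_s G(q_0,s)=-(q_0-1)K''(s)<0$ so $G(q_0,\cdot)$ is strictly decreasing, $G(q_0,q_0)=F(q_0)$, and $G(q_0,s)\to K(q_0)-1<0$ as $s\to\infty$ (since $K(q_0)<K(2)=1$); if $q_0>\bar q$ then $F(q_0)<0$ and $G(q_0,s)<G(q_0,q_0)<0$ for $s>q_0$, while if $\bar q>q_0$ then $F(q_0)>0$ (since $2<q_0<\bar q$), and $G(q_0,\cdot)$ descends strictly from $F(q_0)>0$ to $K(q_0)-1<0$, giving a unique zero $\bar s(q_0)\in(q_0,\infty)$ with the stated signs.

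The only step that is not pure single–variable calculus is the divergence $F(s)\to-\infty$ as $s\to p_1^+$ (or, what suffices, that $\inf_{(p_1,1)}F<0$); this I expect to be the main obstacle, since it rests on $K'(s)\to-\infty$ near $p_1$ from the analysis of $K$ in Lemma~\ref{lem:phi}, combined with the convexity estimate above. In the final write-up I would also record the standing hypotheses $p_0<1$ and $q_0>2$ — needed for the relevant zeros of $G(p_0,\cdot)$ and $G(q_0,\cdot)$ to exist (if $p_0=1$ or $q_0=2$ the corresponding function of $s$ is a positive constant) — and the harmless fact that the assertions about $G(p_0,\cdot)$ are vacuous unless $p_0>p_1$.
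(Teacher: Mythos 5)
Your proof is correct and follows essentially the same route as the paper: compute $F'(s)=-(s-1)K''(s)$ to get monotonicity of $F$ on $(p_1,1)$ and $(1,\infty)$, pin down the boundary values $F(p_1^+)=-\infty$, $F(1)=K(1)-1>0$, $F(2)=-K'(2)>0$, $F(\infty)=-1$, then use $\partial_s G(p,s)=(1-p)K''(s)$, the identity $G(p,p)=F(p)$, and the limits $G(p_0,p_1^+)=-\infty$, $G(q_0,\infty)=K(q_0)-1<0$. Where you add value is in actually justifying the two tacit limits ($|K'(s)|\to\infty$ near $p_1$ via the tangent-line inequality, and $(s-1)K'(s)\to0$ at $\infty$ via the uniform bound $u e^{-u}\le e^{-1}$), in flagging the $\cap$/$\cup$ typo in the statement, and — implicitly — in writing $F'(s)=-(s-1)K''(s)$ correctly, whereas the paper's proof prints the misprint $F'(s)=(1-s)K'(s)$.
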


The sign of $F$ implies the exponential growth or decay of $xu(t,x)$ on the domain where Theorem~\ref{theorem3} applies.

Let us now use this lemma to follow the behaviour of $xu(t,x)$ along the curves $\mathcal C_r $, which correspond to  $x=e^{t K ' (r=s_+)}.$

\begin{itemize} 
\item $p_0 <\bar p<1$ (resp. $2<\bar q < q_0$): $G(p_0,s_+)$ is negative for $s_+ \in (p_1,p_0)$ (resp. $G(q_0,s_+)$ is negative for $s_+\in (q_0,\infty)$), we have an exponential decay on $s_+ < p_0$ (resp. $q_0 < s_+$). For $s$ on the interval $ (p_0,2)$ (resp. $(1,q_0)$), the behaviour is given by the function $F$: exponential decay for $s_+\in (p_0,\bar p)$ (resp. $s_+\in (\bar q, q_0)$) and exponential growth for $s_+\in(\bar p, 2)$ (resp. $s_+\in (1,\bar q)).$ 

All together, this gives a limit for the  zone of exponential growth given on the left by $s_+=\bar p$ (resp. on the right by $s_+=\bar q$).

\item $\bar p <p_0<1$ (resp. $2< q_0 <\bar q $): $G(p_0,\cdot)$ has a unique zero $\bar s(p_0) \in  (p_1,p_0)$ (resp. $G(q_0,\cdot)$ has a unique zero $\bar s(q_0)$ on $(q_0,\infty)$), and we have an exponential decay for $s_+ < \bar s (p_0)$ (resp. $\bar s (q_0) < s_+$) and an exponential increase for $\bar s (p_0) < s_+ < p_0$ (resp. $q_0 < s_+<\bar s (q_0)$). For $s_+ \in (p_0, 2)$ (resp. $s_+\in (1,q_0)$), the behaviour is given by the function $F$: since $\bar p < p_0$ (resp. $q_0 < \bar q$), there is an exponential growth for $s_+\in (p_0,2)$ (resp. $s_+\in (1,q_0)$). 

All together, this gives a limit for the zone of exponential growth given on the left by $s_+ = \bar s(p_0)$ (resp. on the right by $s_+ = \bar s(q_0)$). 

\end{itemize}

Of course, any combination of these two cases, for the respective positions of $\bar p$ and $p_0$ for "$s_+$ small" and of $\bar q$ and $q_0$ for "$s_+$ large" is possible, which finally leads to four possible situations. 

We also notice that for the case of very regular initial data, \emph{i.e.} $p_0 <\bar p$ and $\bar q < q_0,$ the zone of exponential growth or decay depends only on the fragmentation kernel properties, which define the values of $\bar p$ and $\bar q,$ and endly the dependence on the initial condition only appears as a correction term (presence or not of $\f{1}{\sqrt{t}}$). 

\subsection*{Example 1:  homogeneous kernel.}
\label{sec:homogeneous}
For the homogeneous kernel, $k_0=2$ so that Equation~\eqref{def:mellin:noyau} gives ${ K}(s)=\frac{2}{s},$ so that $p_1=0$ and we have
$$s_+(t,x)=\sqrt{\f{2t}{-\log(x)}},\qquad K'(s_+)=-\f{2}{s_+^2}.$$
We calculate easily that 
$$F(s)=\f{2}{s}-1+2\f{s-1}{s^2}=\f{-s^2+ 4s-2}{s^2} =-\f{\big(s-(2+\sqrt{2})\big)\big(s-(2-\sqrt{2})\big)}{s^2},$$
so that $\bar p= 2-\sqrt{2},$ $\bar q = 2+\sqrt{2},$ and
$$G(p,s)=\f{2}{p}-1+2\f{p-1}{s^2}=\f{1}{s^2} \biggl(s^2 (\f{2}{p} -1) + 2(p-1)\biggr),$$
so that $\bar s(p_0)= \sqrt{\f{2p_0(1-p_0)}{2-p_0}},$ $\bar s (q_0)=\sqrt{\f{2q_0(q_0-1)}{q_0-2}}.$ Notice that $\bar s (p_0) \to 0=p_1$ when $p_0 \to 1,$ and $\bar s (q_0) \to \infty$ when $q_0 \to 2:$ the ``less regular" the initial data, the largest the domain of exponential growth.

\begin{figure}[ht]
\centering
\includegraphics[width=0.8\textwidth,height=0.3\textheight]{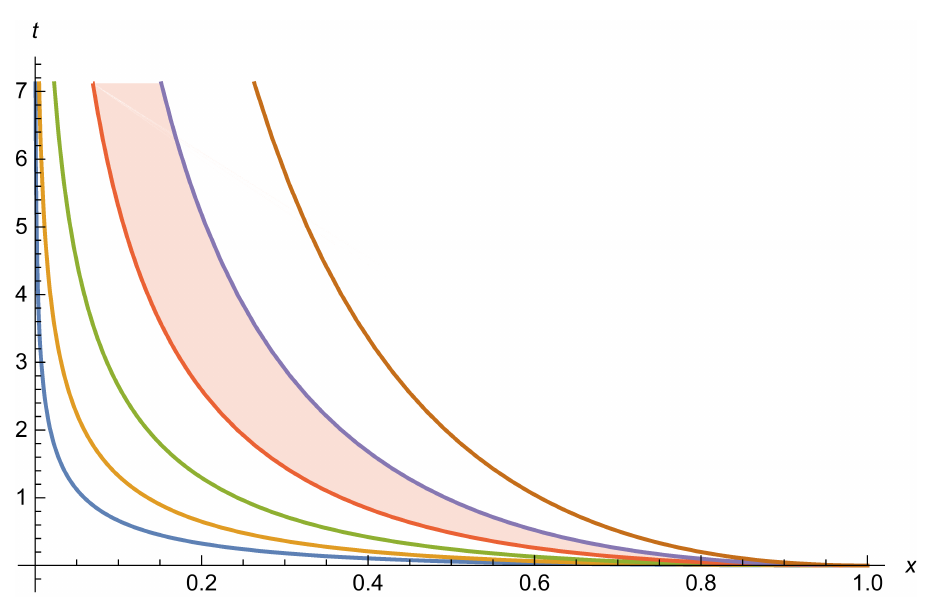}
\caption{\label{fig:splus} Different curves of the form $s_+=\gamma$ for different values of $\gamma >0$, so that $2t=-\gamma^2 \log x$.The function $xu(t, x)$ tends to zero exponentially fast uniformly out of the shaded region. It tends to infinity inside that region. As $t\to \infty$, the function $x u(t, x)$ concentrates in the interval  $x\in \displaystyle{\left( e^{-\frac {2t} {\gamma _\ell^2}}, e^{-\frac {2t} {\gamma _r^2}}\right)}.$
}
\end{figure}

The values of $\gamma _{ r } $ in the light blue curve and $\gamma  _{ \ell}$ in the green curve in Figure~\ref{fig:splus} depend on the relative values of $q_0$ and $\bar q=2+\sqrt 2$ and of $p_0$ and $\bar p=2-\sqrt 2$ respectively, as detailed above for the general case.

\subsection*{Example 2:  Mitosis kernel.} 
For the mitosis kernel, $k_0=2\delta_{z=\f{1}{2}}$: Condition~H is satisfied, and $K (s)=2^{2-s},$ $p_1=-\infty$ and we have $s_+(t,x)$defined by
$$2^{2-s_+(t,x)}  = e^{(2-s_+)\log(2)}=-\f{\log(x)}{t \log(2)},\qquad s_+(t,x)=2-\f{\log\biggl(-\f{\log(x)}{t\log(2)}\biggr)}{\log(2)}.$$
We calculate easily that
$$F(s)=2^{2-s} -1 + (s-1) \log(2) 2^{2-s},$$
and $\bar p<1$ and $\bar q>2$ are defined by $F(\bar p / \bar q)=0,$ see Figure~\ref{fig:Fmito}. 
$$G(p,s)=2^{2-p} -1 + (p-1) \log(2) 2^{2-s},$$
 so that $\bar s(p_0)=2- \f{1}{\log(2)}\log\biggl(\f{1-2^{2-p_0}}{(p_0-1)\log(2)}\biggr),$ $\bar s(q_0)=2- \f{1}{\log(2)}\log\biggl(\f{1-2^{2-q_0}}{(q_0-1)\log(2)}\biggr).$

\begin{figure}[ht]
\centering
\includegraphics[width=0.9\textwidth,height=0.3\textheight]{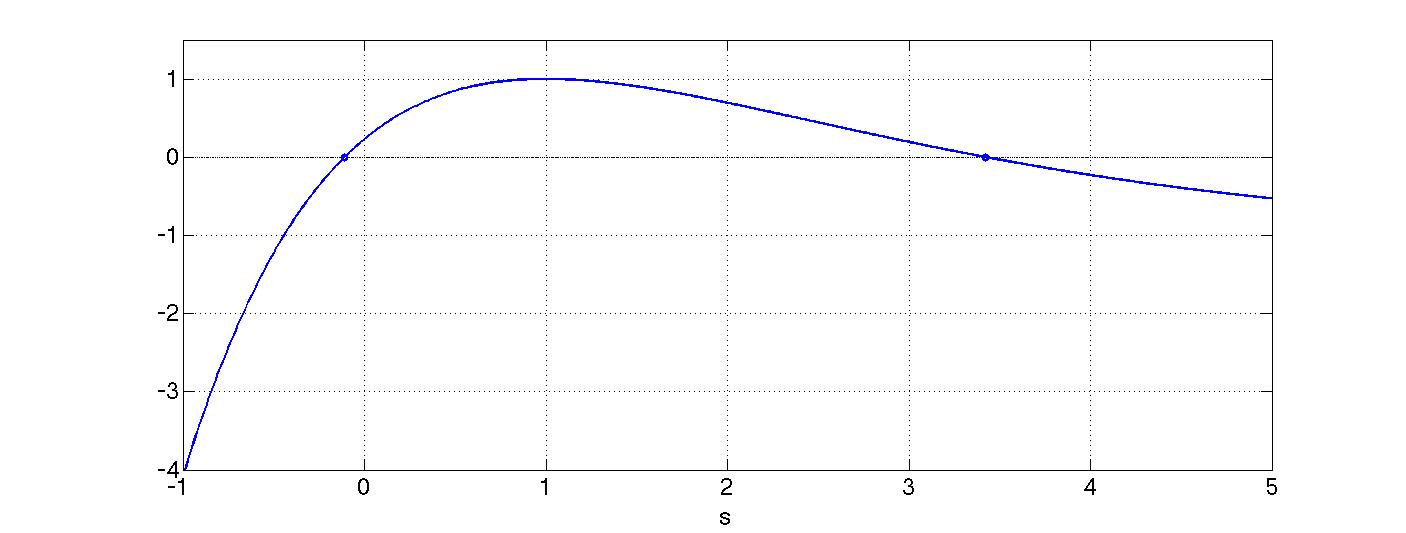}
\caption{\label{fig:Fmito}Function $F(s)$ defined by~\eqref{def:F} for $k_0=2\delta_{y=\f{1}{2}}.$ We see the two zeros $\bar p <0$ and $\bar q >3.$}
\end{figure}

\subsection{The asymptotic behaviour for the growth-fragmentation equation.}\label{sec:croisfrag}
The asymptotic behaviour for the growth-fragmentation equation~\eqref{eq:transp:frag} may also be deduced from Theorems~\ref{theorem1} and~\ref{theorem2}. 
Using Formula~\ref{lin:uv}, we know that the solution $v$ satisfies $v(t, x)=e^{-ct}u(t, xe^{-ct}):$ we can apply directly the results of  Theorems~\ref{theorem1} and~\ref{theorem2}. It remains to analyse in which part of the plane $(x,t)$ there is an exponential growth or decay.

First, for $x=\alpha e^{ct}$ with $\alpha >1,$ Theorem~\ref{theorem1} implies, for a constant $C>0,$
$$ x v(t,x) =  \alpha u(t,\alpha) \sim C \alpha^{1-q_0} e^{(K(q_0)-1)t},$$
so that there is an exponential decrease for the domain $x>e^{ct}.$ 

For the domain $x< e^{ct},$ 
the lines where we can follow the mass concentration are now given by $s_+(t,xe^{-ct})$ constant, \emph{i.e.} $x=e^{(c+K'(s_+))t}.$ Notice that contrarily to the fragmentation equation, these lines either go to infinity or to zero, with a limit case for $s_+=K'^{-1} (-c).$ 

To avoid too long and repetitive considerations, we focus on the case of very smooth data, where $p_0<s_+<q_0$ is the main domain of interest in Theorem~\ref{theorem2}.
On that domain, for a constant $C>0,$
$$ x v(t,x) =  e^{K'(s_+)t} u(t,e^{K'(s_+)t}) \sim \f{C}{\sqrt{t}} e^{(K'(s)(1-s) +K(s)-1)t},$$
where we recognize $e^{F(s)t}$ with $F$ defined by~\eqref{def:F}. 
We can thus apply directly the study done for the fragmentation equation: the domain of exponential growth is for $\bar p < s_+(t,xe^{-ct}) < \bar q,$ with $\bar p, \bar q$ defined in Lemma~\ref{lem:FG}. This corresponds to curves $x=e^{(c+K'(s_+))t}$ with $K'(s_+) \in \bigl(K'(\bar p), K'(\bar q)\bigl).$  What is new here is to investigate whether these curves go to zero or to infinity in large times. 
We also notice that the curve of maximal exponential growth is given for $F(s)=F(1)=K(1)-1.$
We have the following cases.

\begin{itemize}
\item $c >-K'( \bar p):$ the zone of exponential growth goes to infinity.
\item $-K'(\bar q) < c < -K'(\bar p):$ the domain of exponential growth covers a wide range of lines $x$, going to infinity or to zero. 
In particular, the line of maximal growth, given by $x=e^{(c+K'(1))t},$ may either go to zero if $c<-K'(1)$ or to infinity if $c>-K'(1).$
\item $ c< -K'(\bar q):$  the domain of exponential growth goes to zero.
\end{itemize}

\subsection{Numerical illustration}
To visualize the long-time behaviour of Equation~\eqref{eq:frag} as described in the previous results, we choose to simulate it in the log-variable $y=\log(x).$ The equation becomes, for $n(t,y)=u(t,x),$
\begin{equation}
\f{\p}{\p t} n(t,y) + n(t,y) = \int\limits_0^\infty  \kappa_0 ( z) n(t,y+z)dz,
\end{equation}
where $\kappa_0(z)=k_0 (e^{-z}).$ In the case of the mitosis kernel $k_0=2\delta_{x=\f{1}{2}},$ we have 
\begin{equation}
\f{\p}{\p t} n(t,y) + n(t,y) = 4 n(t,y+\log 2), \quad n(0,y)=n^{in} (y). 
\end{equation}
We choose a gaussian for the initial data $n^{in}=e^{-\f{(y-y_0)^2}{2}},$ with $y=-5,$ and following the solution in time we draw the limits of the zone where $n(t,y)\geq 0.1 \max\limits_x n(t,y).$ This is given in Figure~\ref{fig:mitose}. A linear fit of the form $t=a_1 y + a_2$ gives excellent results. 
This corresponds to curves $x=e^{\f{t-a_2}{a_1}},$ which are of the predicted shape. 
\begin{figure}[ht]
\centering
\includegraphics[width=0.95\textwidth,height=0.3\textheight]{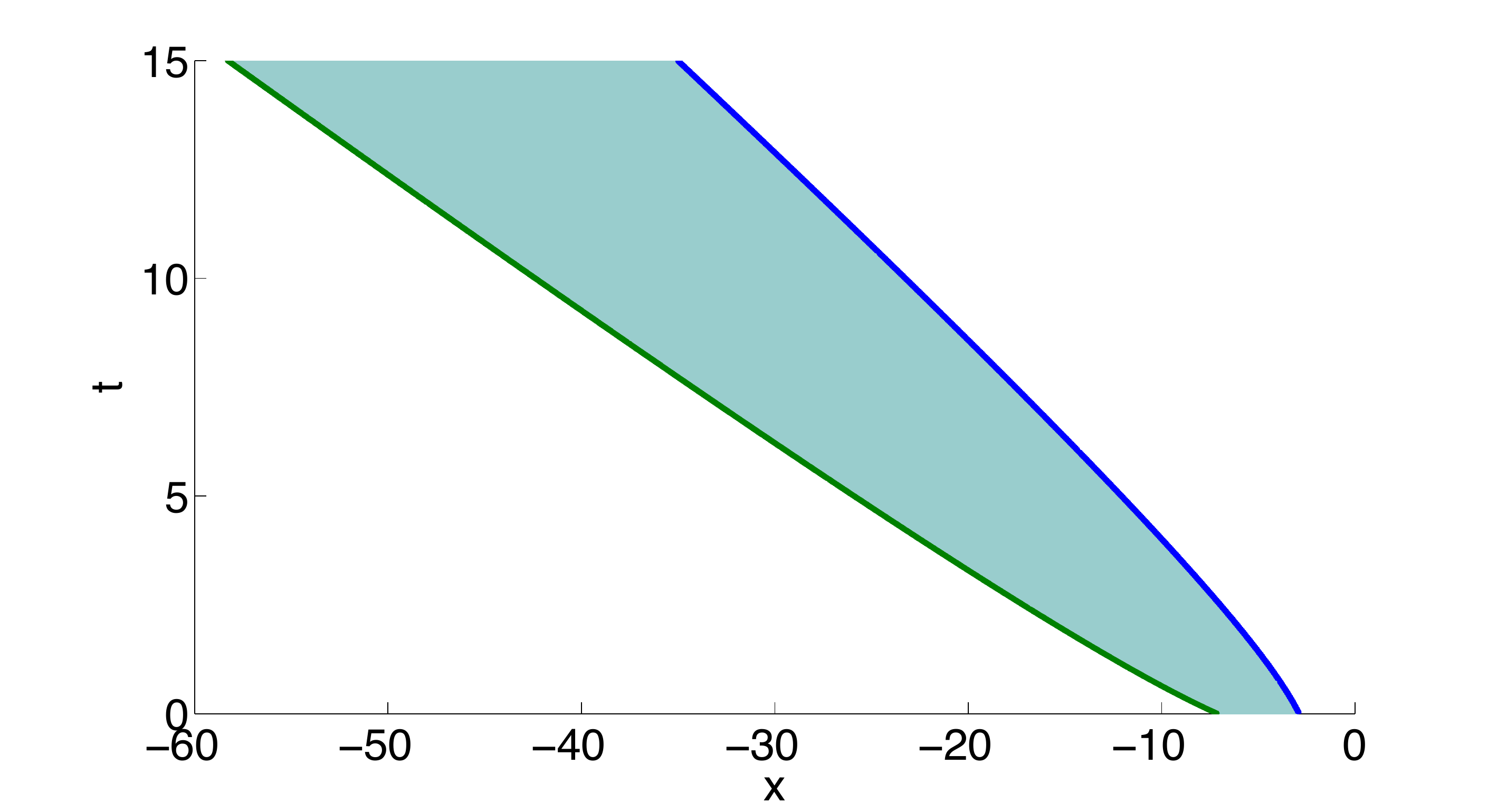}
\caption{\label{fig:mitose} The solution of the growth-fragmentation equation in a log-scale. The two curves in blue and green represent limits for the major part of the support of $u(t,x):$ inside these two curves, the solution $u(t,x)$ is larger than $10\%$ of its maximal value in $x$ at time $t$.}
\end{figure}

\section*{Appendix}
We give in this appendix the statements and proofs of some important auxiliary results.
\begin{lemma}\label{lem:phi}
Let $k_0$ be defined by~\eqref{def:probak}, $K$ its Mellin transform defined by~\eqref{def:mellin:noyau}, $p_1 \in [-\infty, 1)$ defined by~\eqref{k0:p1}, and $\phi(s,t,x)$ defined by~\eqref{def:phi2}. Then for $x<1,$  $\phi(\cdot,t,x)$ is convex, it decreases for $s \in (p_1,s_+(t,x))$ and increases for $s \in (s_+(t,x),+\infty)$ where $s_+(t,x)$ is defined by
\begin{eqnarray}
\label{alphaplus2}
s_+(t, x)=(K')^{-1}\left(\f{\log(x)}{t}\right).
\end{eqnarray}
\end{lemma}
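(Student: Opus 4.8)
The plan is to deduce everything from two facts about the Mellin transform $K$ on the half-line $\Rea(s)>p_1$: that it is smooth there, and that its second derivative is strictly positive. First I would justify differentiating $K(s)=\int_0^1 z^{s-1}k_0(z)\,dz$ under the integral sign for real $s>p_1$. The point is that for every $s_0>p_1$ and every integer $j\ge 0$ one has $|\log z|^{j}z^{s-1}\le C_{j,s_0}\,z^{s_0-1}$ on $(0,1]$ whenever $s\ge s_0$, because $|\log z|^{j}=o(z^{-\varepsilon})$ as $z\to 0^+$ for any $\varepsilon>0$ and one may pick $\varepsilon$ with $s_0-\varepsilon>p_1$; since $z^{s_0-1}k_0(z)$ is $k_0$-integrable by the very definition of $p_1$, dominated convergence applies and gives
\[
K'(s)=\int_0^1 (\log z)\,z^{s-1}k_0(z)\,dz,\qquad
K''(s)=\int_0^1 (\log z)^2\,z^{s-1}k_0(z)\,dz ,
\]
and the analogous formula for $K'''$, all valid and finite on $(p_1,+\infty)$.

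From these formulas I would read off the signs. Since $\log z<0$ on $(0,1)$, $K'(s)<0$ for all $s>p_1$; and since $(\log z)^2 z^{s-1}>0$ on $(0,1)$ while $k_0$, by the normalisation $\int_0^1 k_0(z)\,dz>1=\int_0^1 z\,k_0(z)\,dz$ (equivalently $\int_0^1(1-z)k_0(z)\,dz>0$), is not the point mass at $z=1$, we get $K''(s)>0$ for every $s>p_1$. Hence $K$ is strictly convex on $(p_1,+\infty)$ and $K'$ is continuous and strictly increasing there; moreover $K'(s)\to 0^-$ as $s\to+\infty$ (dominated convergence once more, the integrand tending to $0$ pointwise), so $K'$ maps $(p_1,+\infty)$ onto an interval $(-L,0)$ with $L\in(0,+\infty]$. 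For $x\in(0,1)$ the quantity $\tfrac{\log x}{t}$ is negative, hence lies in this range (for all such $x$ when $L=+\infty$, which covers the cases of interest, e.g.\ the two worked examples), and $s_+(t,x)=(K')^{-1}\!\big(\tfrac{\log x}{t}\big)$ is by construction the unique $s\in(p_1,+\infty)$ with $K'(s)=\tfrac{\log x}{t}$.

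It then remains to transfer this to $\phi(\cdot,t,x)$. Differentiating $\phi(s,t,x)=-s\log x+tK(s)$ in $s$ gives, for $s>p_1$ and $t>0$,
\[
\partial_s\phi(s,t,x)=-\log x+tK'(s)=t\Big(K'(s)-\tfrac{\log x}{t}\Big),\qquad
\partial_s^2\phi(s,t,x)=tK''(s)>0 ,
\]
so $\phi(\cdot,t,x)$ is strictly convex on $(p_1,+\infty)$. When $x<1$, strict monotonicity of $K'$ together with $K'(s_+(t,x))=\tfrac{\log x}{t}$ shows that $K'(s)-\tfrac{\log x}{t}<0$ on $(p_1,s_+(t,x))$ and $>0$ on $(s_+(t,x),+\infty)$; multiplying by $t>0$ yields $\partial_s\phi<0$, resp.\ $>0$, on these two intervals, which is precisely the asserted decrease then increase of $\phi(\cdot,t,x)$.

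The only step requiring genuine care is the first one: legitimising the differentiation of $K$ under the integral sign right up to the abscissa $p_1$, and checking that $K''$ is not merely nonnegative but strictly positive there — this last point is exactly where the hypothesis that $k_0$ is not a Dirac mass at $1$, forced by $\int_0^1 k_0>1$, is used. Everything downstream of that — the monotonicity of $K'$, the definition of $s_+$, and the sign of $\partial_s\phi$ — is then immediate.
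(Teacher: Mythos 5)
Your proof follows essentially the same route as the paper's: compute $\partial_s\phi=-\log x+tK'(s)$ and $\partial_s^2\phi=tK''(s)>0$, read off strict convexity, and locate the unique zero of $\partial_s\phi$ via monotonicity of $K'$. You add two useful refinements the paper passes over: you justify differentiating $K$ under the integral sign via domination, and you observe explicitly that strict positivity of $K''$ uses the fact (implied by $\int_0^1 k_0>1$ together with $\int_0^1 zk_0=1$) that $k_0\neq\delta_1$. Most notably, you are more careful than the paper about the surjectivity of $K'$: the paper simply asserts that $K'$ is a bijection from $(p_1,\infty)$ onto $(-\infty,0)$, whereas you correctly note that $K'$ maps $(p_1,\infty)$ onto some interval $(-L,0)$ with $L\in(0,+\infty]$, and $L<\infty$ is in principle possible (e.g.\ when $p_1\in I(k_0)$ and $\int_0^1|\log z|\,z^{p_1-1}k_0(z)\,dz<\infty$), in which case $s_+(t,x)$ is only defined for $\log x/t>-L$. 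This is a genuine, if minor, imprecision in the paper's statement; in the paper's later theorems $s_+$ is always confined to a compact subinterval of $(p_1,\infty)$, so the gap is harmless downstream, but your flagging of it is a real improvement in rigor.
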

\begin{proof}
We consider the derivative of $\phi$ with respect to  $s$
$$\f{\p\phi}{\p s}(s, t, x) = -\log(x)+t K'(s)=-\log(x)+t \int\limits_0^1 (\log(z))z^{s-1} k_0(z)dz.$$
The second derivative of $\phi$ with respect to  $s$ is
$$\f{\p^2\phi}{\p s^2}(s, t, x) = t K''(s)=t \int\limits_0^1 (\log(z))^2 z^{s-1} k_0(z)dz > 0,$$
so that $\phi(\cdot,t,x)$ is convex.
By definition,
$$
K'(\cdot): (p_1, \infty)\longrightarrow (-\infty, 0 ),\qquad K'(s)=\int\limits_0^1 (\log(z))z^{s-1} k_0(z)dz
$$
is an  increasing bijective function from $(p_1\leq 1, \infty)$ to $(-\infty, 0 )$. Then,  for each $t>0$, $x>0$:
$$\f{\p\phi}{\p s}(\cdot, t, x): (p_1, \infty)\longrightarrow (-\infty, -\log x >0)$$ 
is also increasing and bijective as a function of $s$, so that for all $x\in (0, 1)$, 
the function  $\f{\p\phi}{\p s}(\cdot, t, x)$ has a unique zero on $(p_1, \infty)$ given by~\eqref{alphaplus2}.
\end{proof}
Since  $K'$ is an increasing function of $s$,  $s_+(\cdot, x)$ is an increasing function of $t$ and $x\in (0,1)$.

\begin{lemma}[Lemma~\ref{lem:FG}]\label{lem:FG2}
The function $F(p)$ defined by~\eqref{def:F} has two zeros $\bar p \in (p_1,1)$ and $\bar q \in (2,\infty)$. It is negative on $(p_1,\bar p)\cap (\bar q,\infty)$ and positive on $(\bar p,\bar q).$

If $p_0 <\bar p,$ the function $G(p_0,s)$ defined by~\eqref{def:G} is negative for $s\in (p_1, p_0).$ If $\bar p < p_0,$ the function $G(p_0,\cdot)$ has a unique zero $\bar s (p_0) \in (p_1,p_0),$ so that $G(p,s)$ is negative for $s<\bar s(p_0)$ and positive for $ \bar s(p_0)<s<p_0.$

Similarly:   If $q_0 >\bar q,$ the function $G(q_0,s)$ is negative for $s\in (q_0, \infty).$ If $\bar q > q_0,$ the function $G(q_0,\cdot)$ has a unique zero $\bar s (q_0) \in (q_0,\infty),$ so that $G(q,s)$ is negative for $s>\bar s(q_0)$ and positive for $q_0<s< \bar s(q_0).$
\end{lemma}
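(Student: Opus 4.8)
The plan is to reduce everything to the single–variable convex‑analysis of $g(s):=K(s)+(1-s)K'(s)$, since $F(s)=g(s)-1$ and, for $p$ fixed, $G(p,s)=K(p)-1-(p-1)K'(s)$ with the crucial identity $G(p,p)=g(p)-1=F(p)$. Differentiating under the integral sign on $(p_1,\infty)$ (legitimate, as $K$ is real–analytic there) gives $K''(s)=\int_0^1(\log z)^2z^{s-1}k_0(dz)>0$ — strict unless $k_0$ is a single atom at $1$, excluded by~\eqref{def:probak} — and then $F'(s)=g'(s)=-(s-1)K''(s)$. Hence $F$ and $g$ are strictly increasing on $(p_1,1)$, strictly decreasing on $(1,\infty)$, with common maximum $g(1)=K(1)$, so $F(1)=K(1)-1>0$ by~\eqref{def:probak}; also $F(2)=-K'(2)>0$ since $K'<0$.

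The second step is to pin down the two end limits of $F$. At the right end write $-(s-1)K'(s)=\int_0^1(s-1)|\log z|\,z^{s-1}k_0(dz)$ and note $(s-1)|\log z|z^{s-1}=(ab)e^{-ab}$ with $a=s-1$, $b=|\log z|$, which is bounded by $e^{-1}$ and tends to $0$ pointwise; dominated convergence gives $(s-1)K'(s)\to0$, so $\lim_{s\to\infty}F(s)=\lim_{s\to\infty}K(s)-1\le K(3)-1<0$ (by strict monotonicity of $K$ and $K(2)=1$). At the left end the key identity is
\[
F(s)-\bigl(K(1)-1\bigr)=g(s)-g(1)=(1-s)K'(s)-\int_s^1 K'(\sigma)\,d\sigma=\int_s^1\bigl(K'(s)-K'(\sigma)\bigr)\,d\sigma,\qquad s<1;
\]
the integrand is negative since $K'$ is strictly increasing, and since $K'(s)\to-\infty$ as $s\to p_1^+$ (the standing hypothesis that $K'\colon(p_1,\infty)\to(-\infty,0)$ is bijective, used in Lemma~\ref{lem:phi}), the integral diverges, so $F(p_1^+)=-\infty$. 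With the monotonicity of the first step this yields the unique zeros $\bar p\in(p_1,1)$ and $\bar q\in(2,\infty)$ (the latter in $(2,\infty)$ because $F(2)>0>F(+\infty)$), and the stated sign pattern ($F<0$ on $(p_1,\bar p)\cup(\bar q,\infty)$, $F>0$ on $(\bar p,\bar q)$).

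The third step transfers this to $G$. For fixed $p$, $\partial_sG(p,s)=-(p-1)K''(s)$, so $G(p_0,\cdot)$ is strictly increasing (as $p_0<1$) and $G(q_0,\cdot)$ strictly decreasing (as $q_0>2$), with $G(p,p)=F(p)$. If $p_0<\bar p$ then either $p_0\le p_1$ and $(p_1,p_0)=\varnothing$ (nothing to prove), or $p_0\in(p_1,\bar p)$ and $F(p_0)<0$, whence $G(p_0,s)<G(p_0,p_0)=F(p_0)<0$ for $s<p_0$. If $\bar p<p_0<1$ then $F(p_0)>0$, while $G(p_0,s)=K(p_0)-1+(1-p_0)\,(-K'(s))^{-}\to-\infty$ as $s\to p_1^+$ (again because $K'(p_1^+)=-\infty$ and $1-p_0>0$), so $G(p_0,\cdot)$ has a unique zero $\bar s(p_0)\in(p_1,p_0)$ with the claimed signs. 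Symmetrically, $G(q_0,s)\to K(q_0)-1<0$ as $s\to\infty$ (using $K'(s)\to0$ and $K(q_0)<K(2)=1$), so for $q_0>\bar q$ one has $G(q_0,s)<G(q_0,q_0)=F(q_0)<0$ for $s>q_0$, and for $2<q_0<\bar q$ one has $F(q_0)>0$ and $G(q_0,\cdot)$ decreasing from $F(q_0)>0$ to a negative limit, giving the unique zero $\bar s(q_0)\in(q_0,\infty)$.

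The step I expect to require the most care is the claim $F(p_1^+)=-\infty$ (equivalently $g(p_1^+)=-\infty$): all the rest is soft monotonicity/intermediate‑value bookkeeping, but this one genuinely needs the blow‑up of $K'$ at the abscissa of convergence $p_1$. It is precisely the running hypothesis $K'(p_1^+)=-\infty$ (built into the assumptions on $k_0$ via Lemma~\ref{lem:phi}) that makes the integral identity above force divergence; the same hypothesis is what drives $G(p_0,p_1^+)=-\infty$. One should also be slightly careful that the boundary limits are taken within the admissible domain (both $p_0$ and $q_0$ may lie outside $(p_1,\infty)$, in which case the corresponding assertion is vacuous).
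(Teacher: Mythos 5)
Your proof is correct and follows essentially the same route as the paper's: compute $F'(s)=(1-s)K''(s)$ to get the monotonicity of $F$ on $(p_1,1)$ and $(1,\infty)$, evaluate $F$ at $1$, $2$, $p_1^+$, $+\infty$ to locate $\bar p$ and $\bar q$, then use $G(p,p)=F(p)$ together with the monotonicity of $G(p,\cdot)$ and its boundary limits. The only difference is that you supply justifications (dominated convergence for $(s-1)K'(s)\to0$, the integral identity forcing $F(p_1^+)=-\infty$) for limits the paper merely asserts, and you quietly correct two typos in the paper — $F'(s)$ involves $K''$, not $K'$, and the sign set should be $(p_1,\bar p)\cup(\bar q,\infty)$, not $\cap$.
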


\begin{proof}
We first notice that $F(p_1)=-\infty$ (this may be viewed by going back to the definition of $ K$, $p_1$ and $K'$), $F(1)=K (1)-1 >0$, $F(2)=-K' (2)>0$ and $F(+\infty)=-1,$ so that by continuity $F$ has at least two zeros $\bar p \in (p_1, 1)$ and $\bar q \in (2,\infty).$ Since $F'(s)=(1-s) K ' (s),$ $F$ is increasing on $(p_1,1)$ and decreasing on $(1,\infty):$ $\bar p$ and $\bar q$ are its only zeros. This implies that $F$ is negative on $(p_1,\bar p)\cup (\bar q, \infty)$ and positive on $(\bar p, \bar q).$ 
 
We notice that $G(s,s)=F(s).$ Considering $G$ as a function of $s$ we have 
$$\frac{\p}{\p s} G(p,s)=(1-p)K '' (s),$$
so that $\frac{\p}{\p s} G(p,s) >0$ for $p<1,$ $\frac{\p}{\p s} G(p,s)<0$ for $p>1:$ $G(p_0,\cdot)$ is an increasing function of $s$ for $p_0\in (p_1,1)$ and $G(q_0,\cdot)$ is a decreasing function of $s$ for $q_0\in (2,\infty).$ Since $G(p,p)=F(p)$ is positive on $(\bar p,\bar q)$ and negative on $(p_1,\bar p) \cup (\bar q, \infty)$, and $G(p_0<1,p_1) = - \infty,$ and $G(q_0>2,\infty)=K (q_0) -1<0,$ we have two cases for each part $(p_1,1)$ and $(2,\infty)$ according to the respective position of $\bar p$ and $p_0,$ $\bar q$ and $q_0$, as stated in the lemma.
 \end{proof}
 
 \subsection*{Weak solutions of the fragmentation equation (\ref{eq:frag}).}
 \begin{definition}
 A function $u \in L^\infty([0, \infty); L^1((1+x)dx))$ is said a weak solution of the fragmentation equation (\ref{eq:frag}) if for all $\varphi \in C_c([0, \infty))$ and a.e. $t>0$:
 \begin{eqnarray}
\int _0^\infty u(t, y)\varphi (y)dy&=&\int _0^\infty u_0(y)\varphi (y)dy+\nonumber\\
&+&\int _0^t\int _0^\infty u(s, y)
\left(\int _0^1\varphi (z y)k_0(z)dz-\varphi (y)\right)dy ds.\label{eq:wfrag}
 \end{eqnarray}
 \end{definition}
If  $u_0\in  L^1((1+x)dx)$ we may obtain a weak solution  $$u\in C([0, \infty ); L^1((1+x)dx))\cap
L^\infty ([0, \infty ); L^1(xdx))$$ of Equation~(\ref{eq:frag}) as follows.  We first obtain a solution  $w$ of the integral equation 
\begin{equation}
\label{S5IntEq}
w(t, x)=u_0+\int _0^t\int_x^\infty \f{1}{y}k_0\left(\f{x}{y}\right)w(s,y)dyds,\,\,\, a.e. t>0.
\end{equation}
To this end we define:
$$
T(w)(t, x)=u_0(x)+\int _0^t\int_x^\infty \f{1}{y}k_0\left(\f{x}{y}\right)w(s,y)dyds.
$$
We claim that  $T$ is a contraction from 
$ C([0, \tau ]; L^1((1+x)dx))$ into itself for any $0<\tau <1$. 
\begin{eqnarray*}
||T(w)(t)|| _{ L^1 }&\le &||u_0|| _{ L^1 }+\int _0^t\int _0^\infty\int_x^\infty \f{1}{y}k_0\left(\f{x}{y}\right)|w(s,y)|dydxds\\
&=&||u_0|| _{ L^1 }+\int _0^t\int _0^\infty\int_0^y \f{1}{y}k_0\left(\f{x}{y}\right)|w(t,y)|dxdyds\\
&=&||u_0|| _{ L^1 }+\int _0^t\int _0^\infty\int_0^1 k_0(z)|w(s,y)|dzdyds\\
&\le &||u_0|| _{ L^1 }+ \tau \sup _{ s\in (0, \tau ) }||w(s)|| _{ L^1 }.
\end{eqnarray*}
\begin{eqnarray*}
||T(w)(t)|| _{ L^1 (xdx)}&\le &||u_0|| _{ L^1(xdx) }+\int _0^t\int _0^\infty x\int_x^\infty \f{1}{y}k_0\left(\f{x}{y}\right)|w(s,y)|dydxds\\
&=&||u_0|| _{ L^1(xdx) }+\int _0^t\int _0^\infty \int_0^y x\f{1}{y}k_0\left(\f{x}{y}\right)|w(s,y)|dxdyds\\
&=&||u_0|| _{ L^1(xdx) }+\int _0^t\int _0^\infty\int_0^1 zk_0(z)y|w(s,y)|dzdyds\\
&\le &||u_0|| _{ L^1(xdx) }+ \tau \sup _{ s\in (0, \tau ) }||w(s)|| _{ L^1(xdx) }.
\end{eqnarray*}

If $w_1$ and $w_2$ are in $L^1((1+x)dx)$ and $w=w_1-w_2$:
\begin{eqnarray*}
||T(w_1)(t)-T(w_2)(t)|| _{ L^1 }&\le &\int _0^t\int _0^\infty\int_x^\infty \f{1}{y}k_0\left(\f{x}{y}\right)|w(s,y)|dydxds\\
&=&\int _0^t\int _0^\infty\int_0^y \f{1}{y}k_0\left(\f{x}{y}\right)|w(s,y)|dxdyds\\
&=&\int _0^t\int _0^\infty\int_0^1 k_0(z)|w(s,y)|dzdyds\\
&\le & \tau \sup _{ s\in (0, \tau ) }||w(s)|| _{ L^1 }.
\end{eqnarray*}
Therefore, there is a fixed point 
$w\in C([0, \tau]; L^1((1+x)dx))$ of $T$. Since the time of existence is independent of the initial data, this procedure may be iterated to obtain a solution $w\in C([0, \infty); L^1((1+x)dx))$ of (\ref{S5IntEq}) in $L^1((1+x)dx))$.  We immediately deduce  $w\in C([0, \infty ); L^1((1+x)dx))\cap W _{ loc }^{1, \infty}((0, s ); L^1((1+x)dx))$. Then,
the function 
$$
u(t, x)=e^{-t}w(t, x)
$$
satisfies, for a.e. $t>0$:
\begin{eqnarray*}
&&u_t+u=-e^{-t}w+e^{-t}w_t+w=e^{-t}\int_x^\infty \f{1}{y}k_0\left(\f{x}{y}\right)w(t,y)dy \nonumber \\
&&\Longleftrightarrow u_t+u=\int_x^\infty \f{1}{y}k_0\left(\f{x}{y}\right)u(t,y)dy
\end{eqnarray*}
in $L^1((1+x)dx)$. In particular, after multiplication of both sides of the equation by a test function $\varphi \in L^\infty[0, \infty)$ we obtain for a.e. $t>0$:
\begin{eqnarray*}
\int _0^\infty u(t, y)\varphi (y)dy&=&\int _0^\infty u_0(y)\varphi (y)dy\\
&+&\int _0^t\int _0^\infty u(s, y)\left(\int _0^1\varphi (z y)k_0(z)dz-\varphi (y)\right)dy ds.
 \end{eqnarray*}
We deduce that $u$ is a weak solution. If we choose $\varphi (x)=x$ we obtain for a.e. $t>0$:
\begin{eqnarray*}
\int _0^\infty y u(t, y)dy&=&\int _0^\infty y u_0(y)dy+\int _0^t\int _0^\infty  u(s, y)\left(\int _0^1zy k_0(z)dz-y\right)dy ds\\
&=&\int _0^\infty y u_0(y)dy.
 \end{eqnarray*}
So the mass of $u$ is constant for all $t>0$.

Suppose finally that we have two functions $u_i\in L^1$, $i=1, 2$,  satisfying (\ref{eq:wfrag}).
Then, if $u=u_1-u_2$:
\begin{eqnarray*}
u(t, x)=-\int _0^tu(s, x)ds+\int _0^t\int_x^\infty \f{1}{y}k_0\left(\f{x}{y}\right)u(s,y)dy\\
 \end{eqnarray*}
and,
\begin{eqnarray*}
\int _0^\infty |u(t, x)|dx&\le &\int _0^t\int_0^\infty\int_x^\infty \f{1}{y}k_0\left(\f{x}{y}\right)|u(s,y)|dydx+\int _0^t\int _0^\infty |u(s, x)|dsdx\\
&=&\int _0^t\int_0^\infty\f{1}{y}\int_0^y k_0\left(\f{x}{y}\right)|u(s,y)|dxdy+\int _0^t\int _0^\infty |u(s, x)|dsdx\\
&=&\int _0^t\int_0^\infty|u(s,y)|dy\int_0^1 k_0(z)dz+\int _0^t\int _0^\infty |u(s, x)|dsdx,
 \end{eqnarray*}
 and we deduce $\int _0^\infty |u(t, x)|dx=0$ and $u_1(t)=u_2(t)$ in $L^1$ for a. e. $t>0$. 
 
Notice finally that if we also impose to the initial data to be non negative, i.e. $u_0\ge 0$, then the weak solution $u$ is also non negative. We would first prove the existence of a non negative fixed point $\widetilde w$ of $T$ as above since this operator sends the positive cone of  $C([0, \tau ]; L^1((1+x)dx))$ into itself.  The function $\widetilde u=e^{-t}\widetilde w$ is then a non negative weak solution of (\ref{eq:wfrag}). By uniqueness of the weak solution it follows that $u=\widetilde u \ge 0$.
 
 \subsection*{On the condition H}
 \label{subsectionH}
 We present here some useful remarks on Condition H. 
 \begin{proposition}
\label {propositioncondH1}
Given a sequence $(\sigma  _{ \ell }) _{ \ell \in \N }$ in $(0,1)$, Condition~H is a necessary and sufficient condition in order to have at least one real number $v\not = 0 $ with the following property:
\begin{eqnarray}
\label{vproperty}
\forall \ell\in \N, \exists k(\ell)\in \N; \,\,\vert v\log \sigma  _{ \ell }\vert =2k(\ell)\pi.
\end{eqnarray}
\end{proposition}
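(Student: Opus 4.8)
The plan is to establish the two implications of the equivalence separately, and to observe that the nontrivial direction is ``\eqref{vproperty} $\Rightarrow$ Condition~H''.

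For sufficiency I would simply exhibit the relevant $v$. Assuming $\Sigma=\{\sigma_\ell=\theta^{p_\ell}\}$ as in Condition~H, I take $v=v_*=\frac{2\pi}{\log\theta}$, which is a well-defined nonzero real because $\theta\in(0,1)$, and compute $v\log\sigma_\ell=\frac{2\pi}{\log\theta}\,p_\ell\log\theta=2\pi p_\ell$, so that $|v\log\sigma_\ell|=2\pi p_\ell$ with $k(\ell)=p_\ell\in\N$. This is exactly \eqref{vproperty}; note that it uses neither the ordering nor the setwise coprimality of the $p_\ell$, which are only normalization conventions.

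For necessity I would start from a nonzero $v$ satisfying \eqref{vproperty}; replacing $v$ by $-v$ if needed (the property only involves $|v\log\sigma_\ell|$) I may assume $v>0$. Since each $\sigma_\ell\in(0,1)$ gives $\log\sigma_\ell<0$, I get $v\log\sigma_\ell=-2\pi k(\ell)$ with $k(\ell)\geq 1$, the value $k(\ell)=0$ being impossible as it would force $\sigma_\ell=1$. Hence $\log\sigma_\ell=-\frac{2\pi k(\ell)}{v}$ for all $\ell$, with $k(\ell)\in\N^*$. I would then set $d=\gcd\{k(\ell):\ell\in\N\}$, write $k(\ell)=d\,p_\ell$ with $p_\ell\in\N^*$ and $\gcd\{p_\ell\}=1$, and put $\theta=e^{-2\pi d/v}\in(0,1)$, so that $\sigma_\ell=e^{-2\pi d p_\ell/v}=\theta^{p_\ell}$; a final re-indexing of the (necessarily distinct) integers $p_\ell$ into strictly increasing order then gives exactly Condition~H with $L=+\infty$.

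The one point requiring care is this extraction of a common base: I need to argue that the greatest common divisor of the possibly infinite family $\{k(\ell)\}$ is well defined — it is the positive generator of the subgroup of $\Z$ it generates, and equals $\gcd(k(0),\dots,k(N))$ for $N$ large enough, since these partial gcd's form a non-increasing sequence of positive integers — that dividing out by $d$ yields a setwise-coprime family, and that reordering into an increasing sequence changes neither the set $\Sigma$ nor the gcd. The finite-$L$ case, should one want the proposition in that generality, is handled identically and is in fact easier.
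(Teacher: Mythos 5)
Your proof is correct and follows essentially the same route as the paper's: exhibit $v_*=2\pi/\log\theta$ for sufficiency, and for necessity observe that the common value of $-|v|$ forces $\log\sigma_\ell$ to be a fixed rational multiple of $2\pi/v$, then factor out the gcd of the $k(\ell)$ to obtain $\theta$ and a setwise coprime exponent set. Your write-up is a bit more careful than the paper's on two small points the paper glosses over (the well-definedness of the gcd of an infinite family, and the explicit normalization $\theta=e^{-2\pi d/v}$; the paper's final line even has a slip, writing $p_\ell=k(\ell)$ where it means $p_\ell=k(\ell)/\gcd$), but the argument is the same.
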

\begin{proof}
Notice that $v=0$ satisfies  property~\eqref{vproperty}. If Condition $H$ is satisfied, the number 
\begin{equation}
\label{def:vstar2}
v_*=\frac {2\pi } {\log \theta} <0
\end{equation}
 has the property (\ref{vproperty}) since, for all $\ell$, $\sigma _\ell=\theta^{p_\ell}$ and then,
$$
v_*\log \sigma  _{ \ell }=v_*p_\ell\log \theta=2p_\ell \pi .
$$
Suppose on the other hand that such a $v\in \R\setminus\{0\}$ exists. Then, for all $\ell \not =0$:
$$
-\vert v\vert=\frac {2\pi k(\ell)} {\log \sigma _{ \ell } }=\frac {2\pi k(0)} {\log \sigma _{0} }
$$
and therefore
$$
\sigma _\ell=\sigma _0^{\frac {k(\ell)} {k(0)}}=\left( \sigma _0^{\frac {1} {k(0)}}\right)^{k(\ell)},
$$
and Condition H follows with $\theta=\sigma _0^{\frac {gcd(k(\ell))} {k(0)}}$ and $p_\ell=k(\ell)$.
\end{proof}

\begin{proposition}
\label {propositioncondH2}
Given a sequence $(\sigma  _{ \ell }) _{ \ell \in \N }$ of real numbers in $(0,1)$ satisfying  Condition~H and $v_*,$ $Q$ defined by (\ref{def:vstar2}), the set of real numbers $v$ that satisfy the property (\ref{vproperty}) is given by the set $Q=v_* \Z.$
\end{proposition}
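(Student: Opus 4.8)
The plan is to establish the two inclusions $Q\subseteq\{v\in\R:\ v\text{ satisfies }\eqref{vproperty}\}$ and $\{v\in\R:\ v\text{ satisfies }\eqref{vproperty}\}\subseteq Q$, the union of which gives the claimed equality. For the first inclusion I would take an arbitrary $v=nv_*$ with $n\in\Z$ and use Condition~H, namely $\sigma_\ell=\theta^{p_\ell}$ for every $\ell$, to compute $v\log\sigma_\ell = n v_* p_\ell\log\theta = 2\pi n p_\ell$ (recalling $v_*=2\pi/\log\theta$ from \eqref{def:vstar2}). Hence $|v\log\sigma_\ell| = 2\pi(|n|p_\ell)$ with $|n|p_\ell\in\N$, which is exactly property~\eqref{vproperty} with the choice $k(\ell)=|n|p_\ell$; the case $n=0$ is trivial since then $|v\log\sigma_\ell|=0=2\pi\cdot 0$.

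For the converse, suppose $v\in\R$ satisfies~\eqref{vproperty}. Writing again $\log\sigma_\ell = p_\ell\log\theta$ and using that $\log\theta<0$ because $\theta\in(0,1)$, together with $p_\ell>0$, the identity $|v\log\sigma_\ell| = 2\pi k(\ell)$ becomes $|v|\,|\log\theta|\,p_\ell = 2\pi k(\ell)$ for all $\ell$. Setting $\rho := |v|\,|\log\theta|/(2\pi)\ge 0$, this reads $k(\ell)=\rho\,p_\ell$ for every $\ell$, so in particular $\rho\,p_\ell\in\N$ for all $\ell$. The decisive step is then to deduce that $\rho$ is itself a nonnegative integer. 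I would invoke the setwise coprimality of the family $(p_\ell)$: the greatest common divisor of the whole family equals $1$ and is already attained by a finite subfamily. Indeed, along any enumeration the partial gcds form a nonincreasing sequence of positive integers, hence eventually constant; the common eventual value divides every $p_\ell$, so by the coprimality assumption it equals $1$, and it is the gcd of some finite subfamily $p_{\ell_1},\dots,p_{\ell_m}$. B\'ezout's identity then yields integers $c_1,\dots,c_m$ with $\sum_{j=1}^m c_j p_{\ell_j}=1$; multiplying by $\rho$ gives
\[
\rho \;=\; \sum_{j=1}^m c_j\,(\rho\,p_{\ell_j}) \;=\; \sum_{j=1}^m c_j\,k(\ell_j)\ \in\ \Z .
\]
Since $\rho\ge 0$ we conclude $\rho\in\N$, whence $|v| = \rho\cdot 2\pi/|\log\theta| = \rho\,|v_*|$, i.e. $v=\pm\rho\,v_*\in v_*\Z = Q$. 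Together with the first inclusion this proves the proposition.

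The only genuinely delicate point in this argument is the passage from the "setwise coprime" hypothesis over a possibly infinite index set to a finite B\'ezout relation, i.e. the observation that the gcd of the family is realized on a finite subfamily; everything else is a direct substitution based on the parametrization $\sigma_\ell=\theta^{p_\ell}$ supplied by Condition~H. (One should also note in passing that $v=0$ lies in $Q$, consistently with the remark made after Proposition~\ref{propositioncondH1} that $v=0$ always satisfies~\eqref{vproperty}.)
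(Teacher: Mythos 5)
Your proof is correct, and it establishes the two inclusions cleanly. It does, however, take a somewhat different route than the paper in the nontrivial direction. The paper fixes one index $\ell$ to observe that $v/v_*=k(\ell)/p_\ell$ is rational, writes it as an irreducible fraction $p/q$, and from $\frac{p}{q}\,p_\ell=k(\ell)\in\N$ deduces (via $\gcd(p,q)=1$) that $q$ divides every $p_\ell$, hence $q=1$ by setwise coprimality. You instead work directly with $\rho=|v|/|v_*|$, note $\rho\,p_\ell=k(\ell)\in\N$ for all $\ell$, and invoke B\'ezout on a finite subfamily whose gcd is $1$ to conclude $\rho\in\Z$. Both arguments rest on the same coprimality hypothesis, but the paper's lowest-terms device sidesteps the need to reduce the infinite coprimality relation to a finite B\'ezout identity, whereas your approach requires the extra step (which you correctly supply) that the gcd of the infinite family is already attained on a finite subfamily. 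Your version is a little longer but also a little more explicit about where integrality of $\rho$ comes from; both are sound.
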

\begin{proof}
Suppose that $v\not =v_*$ is a real number satisfying (\ref{vproperty}) and fix  $\ell \in \N$. Then,
\begin{equation}
\label{deuxv}
v_*\log \sigma  _{ \ell }=2p_\ell\pi,\,\,\,v\log \sigma  _{ \ell }=2k(\ell)\pi 
\end{equation}
and we must  have:
$$
v=\frac {k(\ell)} {p_\ell}v_*.
$$
Suppose therefore that 
$$
v=\frac {p} {q}v_*
$$
where $p/q$ is irreducible.  We deduce from (\ref{deuxv}) that for any $\ell \in \N$:
$$
v\log \sigma  _{ \ell }=\frac {p} {q}v_*\log \sigma  _{ \ell }=2\frac {p} {q}p_\ell \pi =2k(\ell)\pi .
$$
and  $p_\ell$ must be a multiple of $q$ for any $\ell$. Since, by assumption, $1$ is the only common divisor of all $p_\ell,$ we deduce that $q=1$ and $v\in \Z v_* = Q.$ 

If, on the other hand $v \in \Z v_*,$ the property~\eqref{vproperty} is immediate.
\end{proof}

\begin{proposition}
\label {propositioncondH3}
Suppose that $b _{ \ell }>0$ for all $\ell\in \N$ and $B=\sum_{ \ell\in \N }b _{ \ell }<\infty$ . Then,  for all $\varepsilon >0$ there exists $\delta >0$ such that:
\begin{equation*}
\label{esti:propositioncondH3}
\sup _{v,\, d(v, Q)\ge \varepsilon  }\,\,\Re e\left(\sum _{ \ell\in \N }b _{ \ell }e^{iv\log \sigma  _{ \ell }}\right)\le 
B(1-\delta).
\end{equation*}
\end{proposition}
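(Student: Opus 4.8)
The plan is to reduce the infinite series to a finite trigonometric truncation which already carries the arithmetic of Condition~H, and then to conclude by a compactness argument over one period.

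First, one always has $\Re e\big(\sum_{\ell\in\N} b_\ell e^{iv\log\sigma_\ell}\big)=\sum_{\ell\in\N} b_\ell\cos(v\log\sigma_\ell)\le B$. Writing $\sigma_\ell=\theta^{p_\ell}$ as in Condition~H and recalling $v_*=2\pi/\log\theta$, we note that $\cos(v\log\sigma_\ell)=1$ is equivalent to $v\,p_\ell/v_*\in\Z$. Since the integers $(p_\ell)_{\ell\le L}$ are setwise coprime, there is a finite index $N_0$ with $\gcd(p_0,\dots,p_{N_0})=1$ (such a finite subfamily exists also when $L=+\infty$). Set $B_{N_0}=\sum_{\ell\le N_0}b_\ell>0$ and consider the continuous function $S(v)=\sum_{\ell\le N_0} b_\ell\cos(v\,p_\ell\log\theta)$, which admits $|v_*|$ as a period because each $p_\ell$ is an integer. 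By B\'ezout there are integers $c_\ell$ with $\sum_{\ell\le N_0} c_\ell p_\ell=1$, so if $v\,p_\ell/v_*\in\Z$ for all $\ell\le N_0$ then $v/v_*=\sum_{\ell\le N_0} c_\ell\,(v\,p_\ell/v_*)\in\Z$; hence $S(v)=B_{N_0}$ holds precisely for $v\in Q$.

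Next, fix $\varepsilon>0$. If $\varepsilon>|v_*|/2$ the set $\{v:\ d(v,Q)\ge\varepsilon\}$ is empty and there is nothing to prove. Otherwise this set meets a fundamental period in a nonempty compact set $C$, on which the continuous function $S$ stays strictly below its maximum $B_{N_0}$; using the $|v_*|$-periodicity of $S$ this yields
\[
\sup_{d(v,Q)\ge\varepsilon} S(v)=\max_{C} S=B_{N_0}-2\delta_0
\]
for some $\delta_0>0$. Then, bounding the remaining terms trivially by $\cos\le 1$, for every $v$ with $d(v,Q)\ge\varepsilon$,
\[
\Re e\Big(\sum_{\ell\in\N} b_\ell e^{iv\log\sigma_\ell}\Big)=S(v)+\sum_{\ell>N_0} b_\ell\cos(v\log\sigma_\ell)\le (B_{N_0}-2\delta_0)+(B-B_{N_0})=B-2\delta_0,
\]
so the statement holds with $\delta=2\delta_0/B\in(0,1)$.

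The only genuinely delicate point is the choice of truncation. One cannot estimate the full series directly: for an arbitrary truncation the ``resonance set'' $\{v:\ \sum_{\ell\le N} b_\ell\cos(v\log\sigma_\ell)=\sum_{\ell\le N} b_\ell\}$ is in general a proper refinement $\tfrac1{d_N}\,Q$ of $Q$, with $d_N=\gcd(p_0,\dots,p_N)$, so one must truncate precisely at an index $N_0$ past which coprimality forces $d_{N_0}=1$ and this resonance set collapses back to $Q$. After that, everything reduces to the routine compactness-and-periodicity argument above, together with the harmless case distinction on the size of $\varepsilon$.
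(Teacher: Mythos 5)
Your proof is correct, and it follows a genuinely different route from the paper. The paper also begins with the pointwise observation that $v\notin Q$ forces the real part to drop strictly below $B$, and then it handles the unboundedness of the set $\{v:\,d(v,Q)\ge\varepsilon\}$ by a sequential compactness argument: given $v_n\to\infty$ at which the sum tended to $B$, it extracts a subsequence along which $e^{iv_n\log\theta}\to A$ on the unit circle, passes to the limit inside the series by dominated convergence, uses coprimality to force $A=1$, and derives a contradiction with $d(v_n,Q)\ge\varepsilon$. You instead replace the problem of unboundedness in $v$ by an \emph{exact periodicity} argument: after truncating to a finite block $\ell\le N_0$ with $\gcd(p_0,\dots,p_{N_0})=1$, the finite sum $S$ is $|v_*|$-periodic, its resonance set collapses to $Q$ by B\'ezout, and the strict gap follows from compactness of one period minus an $\varepsilon$-neighbourhood of $Q$; the tail is then controlled trivially. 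Your argument avoids both the dominated convergence step and the subsequence extraction, so it is somewhat more elementary.

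One remark: the truncation is in fact unnecessary. Since every $p_\ell$ is a positive integer, each term $b_\ell\cos(v\,p_\ell\log\theta)$ already has period $|v_*|/p_\ell$, which divides $|v_*|$, and the full series converges uniformly (Weierstrass $M$-test with $M_\ell=b_\ell$). Hence the \emph{entire} sum $G(v)=\sum_\ell b_\ell\cos(v\,p_\ell\log\theta)$ is continuous and $|v_*|$-periodic, with resonance set exactly $Q$ by the same B\'ezout argument applied to the full (setwise coprime) family. You can therefore run your compactness-over-one-period argument directly on $G$ and dispense with the index $N_0$, the split $S+(\text{tail})$, and the discussion of ``proper refinements $\tfrac1{d_N}Q$''. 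That said, your version is correct; the finite truncation only makes the compactness step look more hands-on at the cost of an extra reduction.
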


\begin{proof} Without lack of generality, we assume  $B=1$.
By definition of $Q$, if $v\not \in Q$, there exists $\ell_0 \in \N$ such that 
$$
\frac {v\log \sigma  _{ \ell _0}} {2\pi }\not \in \Z.
$$
Then,
\begin{eqnarray*}
\Re e\left(\sum _{ \ell\in \N }b _{ \ell }e^{iv\log \sigma  _{ \ell }}\right)&=&	b _{ \ell_0 }\cos(v\log \sigma  _{ \ell_0 })+\sum _{ \ell\in \N, \ell \not = \ell_0 }b _{ \ell }\cos(v\log \sigma  _{ \ell })\\
&\le &1+b _{ \ell_0 }\left(\cos(v\log \sigma  _{ \ell_0 })-1 \right)\\
&=& 1-b _{ \ell_0 }\left(1-\cos(v\log \sigma  _{ \ell_0 })\right)
\end{eqnarray*}
Since the function $v\to \Re e\left(\sum _{ \ell\in \N }b _{ \ell }e^{iv\log \sigma  _{ \ell }}\right)$ is continuous, for all $R>0$:
$$
\sup _{ d(v, Q)\ge \varepsilon , |v|\le R }\Re e\left(\sum _{ \ell\in \N }b _{ \ell }e^{iv\log \sigma  _{ \ell }}\right)<1.
$$
Suppose now that for some sequence $v_n\to \infty$,
$$
\lim _{ n\to \infty }\sum _{ \ell\in \N }b _{ \ell }e^{iv_n\log \sigma  _{ \ell }}=1.
$$
Since $\sigma  _{ \ell }=\theta^{p _{ \ell }}$,
$$
\lim _{ n\to \infty }\sum _{ \ell\in \N }b _{ \ell }e^{iv_n p _{ \ell }\log\theta}=1.
$$
Since $\theta \in \R$ and $v_n\in \R$ for all $n$,  $(e^{iv_n \log\theta}) _{ n\in \N }$ is bounded in $\C$. Then, there is a subsequence $(v_k) _{ k\in \N }$ and $A\in \C$ such that $|A|= 1$ and
$$
e^{iv_n \log\theta}\to A,
$$
and then, for all $\ell \in \N$:
$$
e^{iv_n p _{ \ell }\log\theta}\to A^{p _{ \ell }}.
$$
On the other hand, since $v_n\in \R$, $p _{ \ell }\in \N$ and  $\theta\in \R$:
$$
\left|b _{ \ell }e^{iv_n p _{ \ell }\log\theta}\right|\le b _{ \ell }\in \ell^1(\N)
$$
and then, by the Lebesgue convergence theorem:
\begin{eqnarray}
 \label{Aequalone2}
\sum _{ \ell \in \N }b _{ \ell }A^{p _{ \ell }}=1.
\end{eqnarray}
For some $\sigma \in [0, 2\pi )$, $A=e^{i\sigma }$ and since $\sum  _{ \ell \in \N }b _{ \ell }e^{i\sigma p _{ \ell }}=1$ the calculation above implies that $\sigma \in Q,$ which contradicts the fact that $d(v_n,Q)\geq \varepsilon.$

 We deduce that
$$
\sup _{ d(v, Q)\ge \varepsilon  }\Re e\left(\sum _{ \ell\in \N }b _{ \ell }e^{iv\log \sigma  _{ \ell }}\right)<1.
$$

\end{proof}

\section*{Acknowledgments} We thank B\'en\'edicte Haas for fruitful discussions. The research of M. Doumic is supported by the ERC Starting Grant SKIPPER$^{AD}$ (number 306321). The research of M. Escobedo is supported by  Grants MTM2011-29306,  IT641-13 and SEV-2013-0323. The authors thank the referees for  enlightening and helpful remarks.

After  completion of our manuscript we have been aware of the article~\cite{BW}, where critical growth fragmentation equations are studied
using probabilistic methods.

\medskip
Received xxxx 20xx; revised xxxx 20xx.
\medskip

\end{document}